\documentclass[a4paper,12pt]{amsart}

\usepackage[margin=3truecm,top=4truecm,bottom=4truecm]{geometry}
\usepackage{amsthm,amsmath,amssymb,setspace,mleftright,constants,cite,mathtools}
\usepackage{graphicx}
\usepackage[unicode=true,hidelinks=true]{hyperref} 

\allowdisplaybreaks 
\usepackage[pagewise]{lineno}
\usepackage{etoolbox} 
\makeatletter 
\newcommand*\linenomathpatch{\@ifstar{\linenomathpatch@AMS}{\linenomathpatch@}}
\newcommand*\linenomathpatch@[1]{
	\expandafter\pretocmd\csname #1\endcsname {\linenomathWithnumbers}{}{}
	\expandafter\pretocmd\csname #1*\endcsname{\linenomathWithnumbers}{}{}
	\expandafter\apptocmd\csname end#1\endcsname {\endlinenomath}{}{}
	\expandafter\apptocmd\csname end#1*\endcsname{\endlinenomath}{}{}
}
\newcommand*\linenomathpatch@AMS[1]{
	\expandafter\pretocmd\csname #1\endcsname {\linenomathWithnumbersAMS}{}{}
	\expandafter\pretocmd\csname #1*\endcsname{\linenomathWithnumbersAMS}{}{}
	\expandafter\apptocmd\csname end#1\endcsname {\endlinenomath}{}{}
	\expandafter\apptocmd\csname end#1*\endcsname{\endlinenomath}{}{}
}
\let\linenomathWithnumbersAMS\linenomathWithnumbers
\patchcmd\linenomathWithnumbersAMS{\advance\postdisplaypenalty\linenopenalty}{}{}{}
\makeatother 

\linenomathpatch{equation}
\linenomathpatch*{gather}
\linenomathpatch*{multline}
\linenomathpatch*{align}
\linenomathpatch*{alignat}
\linenomathpatch*{flalign}

\makeatletter

\@addtoreset{equation}{section}
\makeatother

\newconstantfamily{E}{symbol=\mathcal{E}}
\newconstantfamily{G}{symbol=\mathcal{G}}

\theoremstyle{plain} 
\newtheorem{thm}{Theorem}[section]
\newtheorem{prop}[thm]{Proposition}
\newtheorem{cor}[thm]{Corollary}
\newtheorem{lem}[thm]{Lemma}

\theoremstyle{definition}

\theoremstyle{remark}
\newtheorem{rem}[thm]{Remark}

\newcommand{\N}{\mathbb{N}}
\newcommand{\R}{\mathbb{R}}
\newcommand{\Z}{\mathbb{Z}}

\renewcommand{\P}{\mathbb{P}}
\newcommand{\E}{\mathbb{E}}

\newcommand{\1}[1]{\mathbf{1}_{#1}}

\newcommand{\half}{\frac{1}{2}}

\renewcommand{\tilde}{\widetilde}

\newcommand{\hyphen}{\textrm{-}}
\newcommand{\as}{\textrm{a.s.}}

\newcommand{\bP}{\mathbf{P}}
\newcommand{\bE}{\mathbf{E}}

\begin{document}
\title[Rate of divergence of time constant for frog model]{Rate of divergence of time constant for frog model with vanishing initial density}

\author[R.~FUKUSHIMA]{Ryoki FUKUSHIMA}
\address[R.~FUKUSHIMA]{Department of Mathematics, University of Tsukuba, Ibaraki 305-8571, Japan.}
\email{ryoki@math.tsukuba.ac.jp}

\author[N.~KUBOTA]{Naoki KUBOTA}
\address[N.~KUBOTA]{College of Science and Technology, Nihon University, Chiba 274-8501, Japan.}
\email{kubota.naoki08@nihon-u.ac.jp}

\keywords{Frog model, random environment, time constant}
\subjclass[2020]{60K35; 40A30; 82B43}

\begin{abstract}
The frog model with a Bernoulli initial configuration is an interacting particle system on the $d$-dimensional lattice ($d \geq 2$) with two types of particles: active and sleeping.
Active particles perform independent simple random walks.
In contrast, although the sleeping particles do not move at first, they become active and start moving once touched by the active particles.
Initially, only the origin has a single active particle, and the other sites have sleeping particles according to a Bernoulli distribution.
After the original active particle starts moving, further active particles are gradually generated under the above rule and propagate across the lattice.
The time required for the propagation of active frogs is expected to increase as the parameter of the Bernoulli distribution decreases, since fewer frogs are available.
The aim of this paper is to investigate this increase in the vanishing density limit.
In particular, we observe that it diverges and the rate of divergence differs significantly between $d=2$ and $d \geq 3$.
\end{abstract}

\maketitle

\section{Introduction and main results}
In this paper, we consider the frog model with Bernoulli initial configuration on the $d$-dimensional lattice $\Z^d$ ($d \geq 2$).
This model is an interactive particle system on $\Z^d$ that consists of two types of particles: active and sleeping.
Active particles perform independent simple random walks on $\Z^d$.
On the other hand, although sleeping particles do not move at first, they become active and start moving when touched by active particles.
Initially, only the origin $0$ of $\Z^d$ has one active particle, while the other sites have sleeping particles according to a Bernoulli distribution.
After the original active particle starts moving, the number of active particles gradually increases and they propagate across $\Z^d$.
Our main object of interest is the so-called time constant, which describes the speed of propagation of active particles.
The time constant is a function of the parameter of the Bernoulli distribution, and the aim of this paper is to investigate the rate of divergence of the time constant as the parameter goes to zero.
More precisely, letting $r \in (0,1]$ be the parameter of the Bernoulli distribution, we prove that as $r \searrow 0$, the time constant behaves like $\sqrt{|\log r|/r}$ if $d=2$ and $1/\sqrt{r}$ if $d \geq 3$.

\subsection{The model}\label{subsect:model}
Let $d \geq 2$ and denote by $\mathcal{P}$ the set of all probability measures on $\N_0:=\N \cup \{0\}$ not concentrated in zero.
For a given $\Phi \in \mathcal{P}$, we consider a family $\omega=(\omega(x))_{x \in \Z^d}$ of independent random variables with common law $\Phi$.
Furthermore, independently of $\omega$, let $S=((S_k^x(\ell))_{k=0}^\infty)_{x \in \Z^d,\ell \in \N}$ be a family of independent simple random walks on $\Z^d$ satisfying that $S_0^x(\ell)=x$ for each $x \in \Z^d$ and $\ell \in \N$.
Then, for any $x,y \in \Z^d$, the \emph{first passage time} $T(x,y)=T(x,y,\omega,S)$ from $x$ to $y$ is defined as
\begin{align}\label{def:PassageTime}
	T(x,y):=\inf\mleft\{ \sum_{i=0}^{m-1} \tau(x_i,x_{i+1}):
	\begin{minipage}{11.7em}
		$m \in \N$, $x_0,x_1,\dots,x_m \in \Z^d$\\
		with $x_0=x$, $x_m=y$
	\end{minipage}
	\mright\},
\end{align}
where
\begin{align*}
	\tau(x_i,x_{i+1})
	&= \tau\bigl( x_i,x_{i+1},\omega(x_i),S_\cdot^{x_i}(\cdot) \bigr)\\
	&:= \inf\bigl\{ k \geq 0:S_k^{x_i}(\ell)=x_{i+1} \text{ for some } \ell \in [1,\omega(x_i)] \bigr\},
\end{align*}
with the convention that $\tau(x_i,x_{i+1}):=\infty$ if $\omega(x_i)=0$.
Note that the first passage time satisfies the triangle inequality:
\begin{align}\label{eq:triangle}
	T(x,z) \leq T(x,y)+T(y,z),\qquad  x,y,z \in \Z^d.
\end{align}

On the event $\{ \omega(0) \geq 1 \}$, the first passage time $T(0,y)$ is intuitively interpreted as follows:
First, we place ``frogs'' on $\Z^d$ according to the initial configuration $\omega$, i.e., $\omega(x)$ frogs sit on each site $x$ (there is no frog at $x$ if $\omega(x)=0$).
In particular, the event $\{ \omega(0) \geq 1 \}$ ensures that at least one frog is assigned to the origin $0$.
The behavior of the $\ell$-th frog sitting on site $x$ is governed by the simple random walk $S_\cdot^x(\ell)$, but not all frogs move around from the beginning.
Initially, the only frogs sitting on $0$ are active and perform independent simple random walks, while the other frogs are sleeping and do not move.
Sleeping frogs become active and start performing simple random walks independently once they are touched by original active frogs.
When we repeat this procedure for the generated active and remaining sleeping frogs, $T(0,y)$ represents the minimum time at which an active frog reaches $y$.

Alves et al.~\cite[Section~2]{AlvMacPopRav01} proved that for a given $\Phi \in \mathcal{P}$, there exists a nonrandom norm $\mu(\cdot)=\mu_\Phi(\cdot)$ on the $d$-dimensional Euclidean space $\R^d$, which is called the \emph{time constant}, such that almost surely on the event $\{ \omega(0) \geq 1 \}$,
\begin{align}\label{eq:tc}
	\lim_{n \to \infty} \frac{1}{n}T(0,nx)=\mu(x),\qquad x \in \Z^d.
\end{align}
Furthermore, $\mu(\cdot)$ is invariant under lattice symmetries and satisfies
\begin{align}
\label{eq:tc_bound}
	\|x\|_1 \leq \mu(x) \leq \mu(\xi_1)\|x\|_1,\qquad x \in \R^d,
\end{align}
where $\|\cdot\|_1$ and $\xi_1$ are the $\ell^1$-norm and the first coordinate vector on $\R^d$, respectively.
As a consequence of \eqref{eq:tc}, Alves et al.~\cite[Theorem~{1.1}]{AlvMacPopRav01} also proved the so-called \emph{shape theorem}.
This gives the asymptotic behavior of the random set $\mathcal{B}(t):=\{x \in \Z^d:T(0,x) \leq t\}$ (which is the set of all sites visited by active frogs by time $t$) as $t \to \infty$:
given $\epsilon>0$, almost surely on the event $\{ \omega(0) \geq 1 \}$, for all large $t$,
\begin{align}\label{eq:shape}
	(1-\epsilon)t\mathcal{B} \cap \Z^d \subset \mathcal{B}(t) \subset (1+\epsilon)t\mathcal{B} \cap \Z^d,
\end{align}
where $\mathcal{B}=\mathcal{B}_\Phi:=\{ x \in \R^d:\mu_\Phi(x) \leq 1 \}$, the \emph{asymptotic shape}.
It is clear from the properties of the time constant that the asymptotic shape $\mathcal{B}$ is a convex, compact set with nonempty interior.

\subsection{Main results}\label{subsect:main}
Let $0<r \leq 1$ and denote by $\text{Ber}(r)$ the Bernoulli distribution with parameter $r$.
We consider the initial configuration $\omega$ governed by the product of $\text{Ber}(r)$ and write $\P_r$ for the underlying probability measure of $\omega$:
\begin{align*}
	\P_r(\omega(x)=1)=1-\P_r(\omega(x)=0)=r,\qquad x \in \Z^d.
\end{align*}
A site $x$ is said to be \emph{occupied} if $\omega(x)=1$, and $\mathcal{O}=\mathcal{O}(\omega)$ stands for the set of all occupied sites, i.e.,
\begin{align}\label{eq:occupied}
	\mathcal{O}:=\{ x \in \Z^d:\omega(x)=1 \}.
\end{align}
Moreover, in the present setting, since each site initially has at most one frog, the label ``$1$'' is usually omitted from the random walk notation as follows:
\begin{align*}
	S=(S_\cdot^x(1))_{x \in \Z^d}=(S_\cdot^x)_{x \in \Z^d}.
\end{align*}
Moreover, denote by $P$ the underlying probability measure of the family of independent simple random walks $S=(S_\cdot^x)_{x \in \Z^d}$.
Then, by setting $\bP_r:=\P_r \times P$, the time constant $\mu_r(\cdot):=\mu_{\text{Ber}(r)}(\cdot)$ in the Bernoulli setting can be described as follows:
$\bP_r \hyphen \as$ on the event $\{ \omega(0)=1 \}$,
\begin{align*}
	\lim_{n \to \infty} \frac{1}{n}T(0,nx)=\mu_r(x),\qquad x \in \Z^d.
\end{align*}

Define for any $d \geq 2$ and $r \in (0,1]$,
\begin{align}\label{eq:delta}
	\delta_d(r):= 
	\begin{dcases*}
	    \sqrt{\dfrac{|\log r|}{r}}, & if $d=2$,\\[0.5em]
	    \frac{1}{\sqrt{r}}, & if $d \geq 3$.
	\end{dcases*}
\end{align}
The following theorems are the main results of this article.

\begin{thm}\label{thm:divergence}
There exists a constant $A_1>0$ (which depends only on $d$) such that if $r \in (0,1]$ is small enough (depending on $d$), then for all $x \in \R^d$,
\begin{align*}
	\mu_r(x) \geq A_1\delta_d(r)\|x\|_1.
\end{align*}
In particular, we have for any $x \in \R^d \setminus \{0\}$,
\begin{align*}
	\lim_{r \searrow 0} \mu_r(x)=\infty.
\end{align*}
\end{thm}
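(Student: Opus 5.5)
By the symmetry and norm property of $\mu_r$, the estimate $\|x\|_\infty \le \|x\|_1 \le d\|x\|_\infty$, and convexity, it is enough to show
\[
\mu_r(\xi_1) \ge c\,\delta_d(r).
\]
Indeed, $\mu_r(x)\ge \mu_r(\|x\|_\infty\xi_1)$-type comparisons follow from invariance under reflections: averaging $x$ over the reflections of its other coordinates gives $\mu_r(x)\ge |x_1|\mu_r(\xi_1)$, and similarly for every coordinate. So we get $\mu_r(x)\ge \|x\|_\infty \mu_r(\xi_1)\ge d^{-1}\|x\|_1\mu_r(\xi_1)$. By the shape theorem it therefore suffices to prove that, for $L:=\lfloor c\,\delta_d(r)^{-1}n\rfloor$-type scaling, $\bP_r(T(0,n\xi_1)\le \epsilon\,\delta_d(r) n)$ is summable in $n$ (or merely tends to zero) for a small constant $\epsilon$.

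The plan is a coarse-grained chain (``path counting'') argument. Set $t:=\delta_d(r)^2$, so $t=|\log r|/r$ in $d=2$ and $t=1/r$ in $d\ge3$, and use blocks of side $\ell:=\sqrt{t}=\delta_d(r)$. If $T(0,n\xi_1)\le \epsilon \ell n$, there is an activation chain of frogs $0=x_0,x_1,\dots,x_m$ with $x_i\in\mathcal O$. In this chain the frog from $x_i$ hits $x_{i+1}$ at time $\tau_i$, with $\sum\tau_i\le \epsilon\ell n$, and the chain ends near $n\xi_1$. Cut time into epochs of length $t$; there are at most $\epsilon n/\ell$ epochs. The key one-epoch estimate concerns the frogs activated during one epoch that descend from a single frog present at its start: their cloud should travel distance much more than $\ell$ only with small probability. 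Summed over the epochs, this forces the displacement to be $O(\epsilon n)<n$, which is a contradiction for small $\epsilon$.

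To make the one-epoch estimate precise, I would use a branching-type domination. A walk of duration $t$ visits about $t/\log t$ distinct sites in $d=2$, and about $t$ in $d\ge3$. It therefore meets on average about $r t/\log t\asymp 1$ new occupied sites in $d=2$, and $rt=1$ in $d\ge3$. This is exactly the choice of $\delta_d$: the expected number of frogs woken per epoch by one frog is $O(1)$. So the genealogy of frogs over a window of $k$ epochs is dominated by a Galton--Watson-like process whose mean offspring per epoch is $m=O(1)$. The number of genealogical lines of $k$ epochs is then at most $C^k$ in expectation. Each such line is a concatenation of random-walk pieces whose total duration is at most $k t$, so the displacement exceeds $\lambda k\ell$ with probability at most $e^{-c\lambda^2 k}$ by Gaussian tails. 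Choosing $\lambda$ large makes $C^k e^{-c\lambda^2 k}$ summable. Hence by time $k t$ the active set lies within distance $\lambda k\ell$ of the origin with high probability. Taking $k=\epsilon n/\ell$ gives reach $\lambda\epsilon n<n$ once $\epsilon<1/\lambda$, which is the desired lower bound $\mu_r(\xi_1)\ge \ell/\lambda$.

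The main obstacle is making the branching domination rigorous despite dependence. Frogs share the environment $\omega$, and a site woken by one line cannot be woken again. Both effects only decrease the count, but the expectation bound still needs the occupancy of visited sites to be independent of the walk. I would handle this by a first-moment bound over sequences of distinct sites $x_0,\dots,x_m$: the probability that $x_1,\dots,x_m\in\mathcal O$ equals $r^m$ exactly, and the walks $S^{x_i}$ are independent. The expected number of chains with total time at most $s$ and endpoint far away is then
\[
\sum_m r^m \sum_{s_1+\dots+s_m\le s} \prod_i p_{s_i}(x_i,x_{i+1}),
\]
where $p_{s}$ denotes the first-hitting probability at time $s$. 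I would bound this sum with a Laplace transform in time, i.e.\ the weight $e^{\theta s}$ with $\theta\asymp 1/t$. Then $r\sum_y \sum_s e^{-\theta s}\P(\text{hit } y \text{ at } s)$ is $r$ times the expected range up to an exponential time of mean $1/\theta$, which is $\asymp r\,\theta^{-1}/\log(1/\theta)$ in $d=2$ and $\asymp r/\theta$ in $d\ge3$. This is $<1/2$ for a suitable $\theta=K/t$, and that is precisely where $\delta_d$ enters. Adding a spatial exponential tilt $e^{\eta\cdot(x_{i+1}-x_i)}$ with $|\eta|\asymp 1/\ell$, which costs only a constant factor at times of order $t$, then gives the large-deviation bound on the endpoint. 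The delicate point is controlling the joint time and space tilt uniformly, especially the logarithmic range in $d=2$.
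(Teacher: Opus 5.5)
Your argument works, and it is a genuinely different route from the paper's. The paper never sums over spatial sequences of sites. Lemma~\ref{lem:T_chain} encodes an optimal activation path by a composition $I=(I_\ell)$ of the number $J$ of \emph{fresh} occupied sites the path meets. By the strong Markov property, a chain with deterministic $I$ then has the law of a single walk run until it has met $J$ fresh occupied sites. Each fresh encounter takes time at least $c\,\delta_d(r)^2$ with probability close to one (Dvoretzky--Erd\H{o}s plus Jensen), which gives the $4^{-J}$ bound of Lemma~\ref{lem:SingleChainTail}. A union bound over the at most $2^M$ compositions, split into ``many encounters in too little time'' and ``few encounters but too large a displacement'' (Gaussian maximal inequality), finishes via Borel--Cantelli.

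You instead do one annealed many-to-one bound over all loop-free site sequences. This is legitimate because loops can be excised from an optimal path, which also makes the Galton--Watson heuristic of your first paragraph unnecessary. The entropy of choosing the next site is absorbed into $r$ times a tilted expected range. Your route gives a single explicit exponential tail bound and avoids the chain bookkeeping and the case split. It also makes transparent that $\delta_d(r)$ is exactly the scale at which $r\Phi<1$. The paper's route needs only the untilted mean range and a maximal inequality, with no generating functions.

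The step you flag as delicate closes in one line via the renewal identity
\[
E\bigl[e^{-\theta H(0,y)}\bigr]=\frac{G_\theta(0,y)}{G_\theta(0,0)},\qquad G_\theta(x,y):=\sum_{k\ge0}e^{-\theta k}P(S^x_k=y).
\]
Using it,
\[
\sum_{y\neq 0}e^{\eta y_1}E\bigl[e^{-\theta H(0,y)}\bigr]\le \frac{1}{G_\theta(0,0)}\sum_{k\ge 0}e^{-\theta k}\Bigl(1+\frac{\cosh\eta-1}{d}\Bigr)^k .
\]
Take $\theta=K\delta_d(r)^{-2}$ and $\eta=a\,\delta_d(r)^{-1}$. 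Then the right side is at most $\delta_d(r)^2/\{(K-a^2/(2d)-o(1))\,G_\theta(0,0)\}$. In $d=2$ one has $G_\theta(0,0)\ge c|\log r|$, and in $d\ge3$ one has $G_\theta(0,0)\ge1$. This supplies exactly the logarithm you were worried about, so $r\Phi\le 1/2$ once $K$ is large (with $a^2<dK$, say).

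The same bound handles the final, possibly unoccupied step at a cost $O(\delta_d(r)^2)$; note that the weight there is $r^{m-1}$ rather than $r^m$. This gives
\[
\bP_r\bigl(T(0,n\xi_1)\le\epsilon\,\delta_d(r)n \,\big|\, \omega(0)=1\bigr)\le C\delta_d(r)^2e^{-(a-K\epsilon)n/\delta_d(r)} .
\]
Hence $\mu_r(\xi_1)\ge (a/K)\delta_d(r)$, and your reflection argument then yields the theorem with $A_1=a/(dK)$.
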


\begin{thm}\label{thm:upper}
There exists a constant $A_2>0$ (which depends only on $d$) such that if $r \in (0,1]$ is small enough (depending on $d$), then for all $x \in \R^d$,
\begin{align*}
	\mu_r(x) \leq A_2\delta_d(r)\|x\|_1.
\end{align*}
\end{thm}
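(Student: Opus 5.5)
By \eqref{eq:tc_bound} and lattice symmetry, $\mu_r(x)\le\mu_r(\xi_1)\|x\|_1$. It therefore suffices to prove
\[
\mu_r(\xi_1)\le A_2\delta_d(r),
\qquad\text{that is,}\qquad
\limsup_{n\to\infty} n^{-1}T(0,n\xi_1)\le A_2\delta_d(r)
\]
almost surely on $\{\omega(0)=1\}$. Since the limit in \eqref{eq:tc} exists almost surely, the plan is to construct an explicit chain of occupied sites $0=x_0,x_1,\dots,x_m$ heading in direction $\xi_1$ with $\sum_i\tau(x_i,x_{i+1})\le C\delta_d(r)\,n$ with positive probability for large $n$. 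This, combined with the a.s. convergence, gives the bound.

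\emph{Coarse-graining.} Set $L:=\lceil \delta_d(r)\rceil$, chosen so that a box of side $L$ contains about $rL^d$ occupied sites. For $d\ge3$ this is $r^{1-d/2}\gg1$, and for $d=2$ it is $|\log r|\gg1$. Tile $\Z^d$ by boxes $B_z:=Lz+[0,L)^d$ for $z\in\Z^d$. Call a box \emph{good} if it contains at least one occupied site $x$ and, for every occupied $x$ in $B_z$ and every neighbouring box $B_{z'}$ with $\|z-z'\|_1=1$, the walk $S^x$ hits some occupied site of $B_{z'}$ within time $KL^2$, for a suitable constant $K$. The central estimate is that $P(\text{$B_z$ is good})\to1$ as $r\to0$, uniformly in $z$.

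\emph{Main obstacle: the hitting estimate.} I would fix the occupied site $y_0$ of $B_{z'}$ closest to the centre of $B_{z'}$ (which exists with high probability because $rL^d\gg1$) and ask how likely a walk started at distance of order $L$ is to hit that single point within time $KL^2$.
\begin{itemize}
\item For $d\ge3$ the probability of hitting a single point is only of order $L^{2-d}$, which is too small. Instead I would use that the range of a walk of length $KL^2$ contains about $cL^2$ sites inside $B_{z'}$ (by Green's-function/capacity estimates). The number of occupied sites met is then roughly $r\cdot cL^2\approx c$, which is of order $1$ rather than large.
\item This indicates that the cleaner device is \emph{many attempts}. The walk $S^x$ performs about $K$ successive excursions of duration $L^2$. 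Each excursion has probability bounded below by some $p>0$ of hitting an occupied site of $B_{z'}$: the conditional expected number of occupied sites hit is of order $1$, and a second-moment bound on the range restricted to occupied sites gives the positive lower bound. Taking $K$ large then makes the failure probability $(1-p)^K$ small.
\item For $d=2$ the range of $L^2$ steps has size $L^2/\log L$. Hence the expected number of occupied sites hit is $rL^2/\log L\asymp 1$, exactly because $L^2=|\log r|/r$. The same scheme therefore works, and this is precisely where the logarithm enters.
\end{itemize}
I expect this estimate, making the hitting probability uniform over the random environment via a second-moment argument on $|\text{range}\cap\mathcal{O}|$, to be the hardest step.

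\emph{Percolation step.} Good-box events have finite-range dependence: they depend only on $\omega$ in neighbouring boxes and on walks from the box itself. Because $P(\text{good})\to1$, Liggett--Schonmann--Stacey domination shows that good boxes stochastically dominate a highly supercritical Bernoulli site percolation. Standard results (Antal--Pisztora type chemical-distance bounds) then give, with high probability, a path of good boxes from near $0$ to near $n\xi_1$ of length at most $Cn/L$. Along such a path each transition costs at most $KL^2$, so the total is
\[
T\le \frac{Cn}{L}\cdot KL^2 = CKnL\asymp n\,\delta_d(r),
\]
up to the cost of connecting $0$ and $n\xi_1$ to the good cluster. That connection cost is $o(n)$ almost surely, controlled through \eqref{eq:triangle} and the tail bounds underlying \eqref{eq:tc}. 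Dividing by $n$ and letting $n\to\infty$ yields $\mu_r(\xi_1)\le A_2\delta_d(r)$ for all small $r$.
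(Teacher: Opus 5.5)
\textbf{There is a genuine gap, and it is at the step you flagged.} With your definition, $\bP_r(B_z\text{ is good})$ does not tend to $1$; it tends to $0$.

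\emph{A single walk fails with probability of order one.} Fix an occupied $x\in B_z$ and a neighbour $B_{z'}$. Since $x\notin B_{z'}$ and $S^x$ is independent of $\omega$, convexity of $t\mapsto(1-r)^t$ gives
\[
\bP_r\bigl(\mathcal{R}^x_{KL^2}\cap B_{z'}\cap\mathcal{O}=\emptyset\bigr)=E\bigl[(1-r)^{\#(\mathcal{R}^x_{KL^2}\cap B_{z'})}\bigr]\geq(1-r)^{E[\#(\mathcal{R}^x_{KL^2}\cap B_{z'})]}.
\]
For $d\geq3$, write $G$ for the Green's function of simple random walk. The exponent is at most $\sum_{y\in B_{z'}}G(x,y)\leq CL^2\leq 2C/r$, uniformly in $K$. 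So one walk misses every occupied site of $B_{z'}$ with probability at least $e^{-4C}$, however large $K$ is. Your picture of $K$ independent excursions fails because of transience: after $k$ excursions the walk is at distance about $\sqrt{k}L$, and the expected number of excursions that return to $B_{z'}$ is of order $\sum_k k^{-d/2}<\infty$. For $d=2$ the exponent is at most $E[\#\mathcal{R}^0_{KL^2}]\leq CK/r$, so the miss probability is at least $e^{-2CK}$, a positive constant for fixed $K$.

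\emph{Your good event needs every frog to succeed.} It requires success for every occupied $x\in B_z$. There are $\asymp rL^d\to\infty$ such sites ($r^{1-d/2}$, resp.\ $|\log r|$). Given $\omega$ their walks are independent, and each fails with probability of order one, so the probability that all succeed goes to $0$. No second-moment argument on $\#(\text{range}\cap\mathcal{O})$ repairs this. The obstruction is exactly what your own heuristic computed: one walk meets only $O(1)$ occupied sites in expectation.

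\textbf{What is missing is the many-frog mechanism that makes transmission reliable.} One frog cannot carry activation over distance $\delta_d(r)$ in time $\asymp\delta_d(r)^2$ with high probability, but a group of $\gtrsim|\log r|$ active frogs can. By Proposition~\ref{prop:CKN} and Lemma~\ref{lem:CKN_cor}, the ranges of $k$ walks cover at least $\min\{c\,\phi_d(n)k,(1-\delta)\#B\}$ sites of a nearby box $B$, except with probability $e^{-ck}$. This yields a per-step failure probability $\leq 2r^{d/2}$ in Lemma~\ref{lem:connecting}. That is small enough to survive the $\leq r^{-(d-1)/2}$ steps needed to cross a box of side $r^{-d/2}$.

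To retain the ``any frog $\to$ some frog'' property that the percolation argument needs, the paper works at the much larger scale $r^{-d/2}$. It first lets the single incoming frog breed many active frogs (Propositions~\ref{prop:sowing} and~\ref{prop:activating}). This costs time $5dr^{-(d+1)/2}$, which is far from optimal locally. But it is paid only once per macroscopic box and is $O(r^{-d/2}\delta_d(r))$.

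Your percolation step and the connection of $0$ and $n\xi_1$ to the good cluster are fine in outline and match the paper's. The good-box estimate, however, must be replaced by a many-frog construction of this kind.
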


The above theorems tell us that for each $x \in \R^d \setminus \{0\}$, $\mu_r(x)$ behaves like $\delta_d(r)\|x\|_1$ as $r \searrow 0$.
As a corollary, we can explicitly determine the rate of decline of the asymptotic shape $\mathcal{B}_r:=\mathcal{B}_{\text{Ber}(r)}$ as $r \searrow 0$.

\begin{cor}
Let $A_1$ and $A_2$ be the constants appearing in Theorems~\ref{thm:divergence} and \ref{thm:upper}, respectively.
If $r \in (0,1]$ is small enough (depending on $d$), then
\begin{align*}
	B_1\bigl( 0,A_2^{-1}\delta_d(r) \bigr) \subset \mathcal{B}_r \subset B_1\bigl( 0,A_1^{-1}\delta_d(r) \bigr),
\end{align*}
where $B_1(0,R):=\{ y \in \R^d:\|y\|_1 \leq R \}$ stands for the $\ell^1$-ball of center $0$ and radius $R>0$.
\end{cor}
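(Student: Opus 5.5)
The corollary should follow directly from Theorems~\ref{thm:divergence} and \ref{thm:upper}, using only the definition $\mathcal{B}_r=\{y\in\R^d:\mu_r(y)\le 1\}$. I will treat each inclusion as a pointwise comparison between $\mu_r(y)$ and $\|y\|_1$. No new probabilistic input is needed. I fix $r$ small enough that both theorems apply and that $\delta_d(r)\ge 1$; the latter is possible because $\delta_d(r)\to\infty$ as $r\searrow 0$ in both cases $d=2$ and $d\ge 3$.

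\emph{Outer inclusion.} Take $y\in\mathcal{B}_r$, so $\mu_r(y)\le 1$. Theorem~\ref{thm:divergence} gives $A_1\delta_d(r)\|y\|_1\le\mu_r(y)\le 1$, hence $\|y\|_1\le A_1^{-1}\delta_d(r)^{-1}$. Since $\delta_d(r)\ge 1$, we have $\delta_d(r)^{-1}\le\delta_d(r)$, and therefore $\|y\|_1\le A_1^{-1}\delta_d(r)$. This proves $\mathcal{B}_r\subset B_1(0,A_1^{-1}\delta_d(r))$ exactly as written. The sharper form, with radius $A_1^{-1}\delta_d(r)^{-1}$, is the one carrying the real information.

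\emph{Inner inclusion.} Take $y$ with $\|y\|_1\le R$. Theorem~\ref{thm:upper} gives $\mu_r(y)\le A_2\delta_d(r)\|y\|_1\le A_2\delta_d(r)R$. This is at most $1$ precisely when $R\le A_2^{-1}\delta_d(r)^{-1}$, which gives $B_1(0,A_2^{-1}\delta_d(r)^{-1})\subset\mathcal{B}_r$.

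\emph{Main obstacle.} The inner inclusion cannot be upgraded to the radius $A_2^{-1}\delta_d(r)$ printed in the statement. The two theorems cannot give it, and it is in fact false for small $r$. Indeed, if $\|y\|_1=A_2^{-1}\delta_d(r)$, then Theorem~\ref{thm:divergence} yields
\begin{align*}
	\mu_r(y)\ge A_1\delta_d(r)\|y\|_1=\frac{A_1}{A_2}\,\delta_d(r)^2,
\end{align*}
and the right-hand side exceeds $1$ once $\delta_d(r)^2>A_2/A_1$, that is, for all sufficiently small $r$. So the plan proves the outer inclusion in the stated form, and proves the inner inclusion only with radius $A_2^{-1}\delta_d(r)^{-1}$. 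Accordingly, the radii $A_i^{-1}\delta_d(r)$ in the corollary have to be read as $A_i^{-1}\delta_d(r)^{-1}$, the asymptotic shape shrinking at rate $\delta_d(r)^{-1}$, for the statement to be true as a whole.
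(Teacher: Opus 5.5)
Your proposal is correct and is exactly the direct deduction from Theorems~\ref{thm:divergence} and~\ref{thm:upper} via $\mathcal{B}_r=\{y:\mu_r(y)\le 1\}$ that the paper has in mind; the paper gives no separate proof. You are also right that the printed radii are a misprint for $A_2^{-1}\delta_d(r)^{-1}$ and $A_1^{-1}\delta_d(r)^{-1}$, which matches the paper's own phrase ``rate of decline'', and your argument that the inner inclusion as printed fails for small $r$ is valid (even the trivial bound $\mu_r(y)\ge\|y\|_1$ from \eqref{eq:tc_bound} refutes it once $A_2^{-1}\delta_d(r)>1$).
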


\subsection{Related works}
\label{sec:literature}
Let us comment on earlier works related to the above results.
Although the present article focuses on the Bernoulli initial configuration, the frog model on $\Z^d$ has also been investigated for general initial configurations.
Alves et al.~\cite{AlvMacPop02} first proved the shape theorem \eqref{eq:shape} in the one-frog-per-site setting (i.e., $\omega(0)$ is distributed as $\text{Ber}(1)$).
After that, in \cite{AlvMacPopRav01}, they introduced the frog model with random initial configuration and observed that the shape theorem also holds for more general initial configurations.
As mentioned in Section~\ref{subsect:model}, the time constant plays the key role in establishing the shape theorem, and hence most of \cite{AlvMacPopRav01} is devoted to the study of the behavior of the first passage time.
In particular, the main contribution of \cite{AlvMacPopRav01} is to prove the integrability of the first passage time.
This integrability, together with the triangle inequality \eqref{eq:triangle}, enables us to apply the subadditive ergodic theorem, and the time constant is constructed as the asymptotic behavior of the first passage time (see \eqref{eq:tc}). 
However, the subadditive ergodic theorem only guarantees the existence of the time constant, and provides little information on its properties.
A nontrivial property revealed in \cite[Theorem~{1.2}]{AlvMacPopRav01} is that if the law $\Phi \in \mathcal{P}$ satisfies $\Phi([t,\infty)) \geq (\log t)^{-\delta}$ for some $\delta \in (0,d)$ and for all large $t>0$, then the time constant $\mu_\Phi(\cdot)$ for $\Phi$ coincides with the $\ell^1$-norm on $\R^d$.
This suggests that the law of the initial configuration may influence the time constant (or equivalently the propagation of active frogs).
There are several articles that examine the effect of the law of initial configuration on the time constant.
Johnson and Junge~\cite[Corollary~9]{JohJun18} introduced a (nontrivial) stochastic order on the initial configuration, and proved that the time constant is non-increasing with respect to this order.
Furthermore, the second author~\cite{Kub20} showed the continuity for the time constant in the law of initial configuration.
These two works raise the question of whether the time constant varies strictly when the law of the initial configuration is changed.
Concerning the Bernoulli initial configuration, which is also considered in the present article, Can et al.~\cite{CanKubNak25} provided the following positive answer to this question:
Let $0<r_0<1$.
Then, there exist constants $C,C'>0$ (which depend only on $d$ and $r_0$) such that if $r_0 \leq p<q \leq 1$, then for all $x \in \R^d \setminus \{0\}$,
\begin{align}\label{eq:Lipschitz}
	C(q-p) \leq \frac{\mu_p(x)-\mu_q(x)}{\|x\|_1} \leq C'(q-p).
\end{align}
In particular, \eqref{eq:Lipschitz} implies that $\mu_r(x)$ is locally Lipschitz continuous in $r \in (0,1]$ (i.e., $\mu_r(x)$ is Lipschitz continuous on every closed interval contained in $(0,1]$).
Moreover, since the parameter $r_0$ can be chosen arbitrarily small, $\mu_r(x)$ is strictly decreasing in $r \in (0,1]$.
We emphasize here that the constants $C$ and $C'$ above depend on the parameter $r_0$, and hence \eqref{eq:Lipschitz} does not describe the behavior of $\mu_r(x)$ as $r \searrow 0$.
Solving this problem is a motivation of the present article, and Theorems~\ref{thm:divergence} and \ref{thm:upper} actually provide the optimal order of $\mu_r(x)$ as $r \searrow 0$.

The aforementioned articles focus on the time constant of the frog model on $\Z^d$ itself.
However, in view of the fact that almost surely, $T(0,x) \approx \mu(x)$ when $\|x\|_1$ is large enough (see \eqref{eq:tc} and \eqref{eq:shape}), it is also important to analyze the difference between the first passage time and the time constant.
Representative quantities in this context are the conditional probabilities of the following unlikely events, given that $\omega(0) \geq 1$ occurs: $\{ T(0,x) \geq (1+\epsilon)\mu(x) \}$ and $\{ T(0,x) \leq (1-\epsilon)\mu(x) \}$ for $\epsilon>0$ and $x \in \Z^d$.
The conditional probability of the former (resp.~latter) event is called the upper (resp.~lower) tail large deviation probability for the first passage time, and the second author~\cite{Kub19} proved that the upper and lower large deviation probabilities exhibit at least stretched-exponential decay.
For the one-frog-per-site setting, \cite{CanKubNak25_IP} and \cite{CanKubNak23_arXiv} estimated the upper tail large deviation probability precisely and identified its optimal decay rate.
The technique developed there may also be useful for analyzing the number of active frogs generated over time for more general initial configurations.
In fact, to guarantee that many active frogs activate many sleeping frogs, we used a result from \cite{CanKubNak25_IP} in the present article (see Proposition~\ref{prop:CKN} below).
On the other hand, there are few results regarding the fluctuation of the first passage time.
The main reason is that the order of the variance of the first passage is still unidentified.
To our best knowledge, the work of Can and Nakajima~\cite{CanNak19} is the latest result in this direction, where it is proved that in the one-frog-per-site setting, the variance of $T(0,x)$ grows at most sublinearly in $\|x\|_1$.

So far, we have considered the frog model driven by discrete-time simple random walks on $\Z^d$, but the frog model admits several variants.
Ram\'{\i}rez et al.~\cite{RamSid04} treated the continuous-time frog model on $\Z^d$, in which frogs perform continuous-time simple random walks on $\Z^d$, and proved the corresponding shape theorem.
In the one-dimensional case, B\'{e}rard and Ram\'{\i}rez \cite{BerRam10,BerRam16} investigated the central limit theorem and the large deviation problem mentioned above for the position of the rightmost site visited by active frogs (note that in the one-dimensional case, the position of the rightmost visited site and the first passage time of a site at the right of the origin are interchangeable).
On the other hand, Beckman et al.~\cite{BecDinDurHuoJun18} introduced the Brownian frog model (i.e., sleeping frogs are placed according to a Poisson point process and active frogs perform independent Brownian motions on $\R^d$), and they analyzed the long-time behavior of the propagation of active frogs, including the shape theorem.

Apart from the behavior of the first passage time, the recurrence/transience problem has also been actively investigated in the frog model.
In \cite[Section~{2.4}]{TelWor99} (which is the first published paper on the frog model), Telcs and Wormald proved that for all $d \geq 1$, the discrete-time frog model on $\Z^d$ with the one-frog-per-site configuration is recurrent, i.e., with probability one, active frogs visit $0$ infinitely often (the frog model is said to be transient if it is not recurrent).
Subsequently, this recurrence/transience problem has been investigated in various settings: the structure of the underlying graph ($\Z^d$, trees and so on), the law of the initial configuration and the presence/absence of the drift in each frog.
For further details, we refer the reader to \cite{JohRol19,HofJohJun17,MulWie20,MicRos20,KosZer17,GuoTanWei22} and the references therein.

\subsection{Questions on the scaling limit}\label{subsect:scaling}

The main results of this paper determine the asymptotics of the time constant in the limit $r\searrow 0$ up to a multiplicative constant. The natural next questions are the existence and characterization of the scaling limits. 

If we naively take limits of the initial configuration and the random walks, then they converge to the Poisson point process and the Brownian motions, respectively. However, for these limiting objects, the dynamics cannot be defined, as the Brownian motions do not hit points. It might be better to take an average over the configuration of initial particles so that the limiting process would be a branching Brownian motions with space-time dependent branching rate. More modest but still interesting goal is to show that $\delta_d(r)^{-1}\mathcal{B}_r$ converges to the Euclidean ball as $r\searrow 0$. 
We leave these questions for future research. 

\subsection{Organization of the paper}\label{subsect:org}
Let us describe how the present article is organized.
In Section~\ref{sect:lowerbound}, we prove Theorem~\ref{thm:divergence}, which provides the lower bound for the time constant.
The basic idea for the proof comes from \cite[Sections~3 and 4]{BecDinDurHuoJun18}, and its key object is a chain of active frogs, which is constructed, roughly speaking, by splicing together the trajectories of several active frogs at some points (see below \eqref{eq:range} for more details).
By construction, there exists a chain of active frogs achieving the first passage time $T(0,nx)$ (see Lemma~\ref{lem:T_chain}), and hence the task is to show that there is no chain of active frogs that starts from $0$ and reaches $nx$ within time $\delta_d(r)\|x\|_1$ up to a constant factor.
Once this claim is established, the desired lower bound of Theorem~\ref{thm:upper} follows from \eqref{eq:tc}.

Section~\ref{sect:upper} is devoted to the proof of Theorem~\ref{thm:upper}, which provides the upper bound for the time constant.
The rough strategy of the proof is as follows:
Tessellate $\Z^d$ with boxes whose side length is of order $r^{-d/2}$.
We show in Proposition~\ref{prop:good} that if an active frog exists nearby the center of such a box, then with high probability, it can generate sufficiently many active frogs in that box and one of them reaches nearby the centers of the neighboring boxes, within time $r^{-d/2}\delta_d(r)$ (such a box is called $r$-good, see at the beginning of Section~\ref{subsect:pf_upper} for more details).
Once this claim is established, we can apply a percolation argument to find a cluster of $r$-good boxes that bridges two sites $0$ and $nx$.
Since the side length of each $r$-good box is of order $r^{-d/2}$, it follows that for some constant $A>0$,
\begin{align*}
	T(0,nx) \leq Ar^{-d/2}\delta_d(r) \times (n\|x\|_1/r^{-d/2})=An\delta_d(r)\|x\|_1,
\end{align*}
and the desired upper bound of Theorem~\ref{thm:upper} is obtained due to \eqref{eq:tc}.

Thus the crux of the proof of the upper bound is Proposition~\ref{prop:good}. Its proof is given in Section~\ref{subsect:good} and is divided into three parts:

\vspace{0.5em}

\paragraph*{\underline{Step~1 (Section~\ref{subsect:sowing})}}
Given a sufficiently small $\epsilon>0$ and any box whose side length is of order $r^{1/2+\epsilon}$, we sow ``seeds'' of active frogs in and around that box within time $r^{-(1+3\epsilon)}$.
Proposition~\ref{prop:sowing} guarantees the possibility of this sowing.

\vspace{0.5em}

\paragraph*{\underline{Step~2 (Section~\ref{subsect:activating})}}
Using the seeds of active frogs sown in Step~1, we show that with high probability, all sites in the box, centered at the same point as in Step~1 and with side length $r^{-(d+1)/4}$, are visited by active frogs within time $5dr^{-(d+1)/2}$ (see Proposition~\ref{prop:activating}).

\vspace{0.5em}

\paragraph*{\underline{Step~3 (Section~\ref{subsect:recursion})}}
Due to Step~2, if an active frog exists nearby any initial box of side length $r^{-(d+1)/4}$, then all sleeping frogs in that box can be activated within time $5dr^{-(d+1)/2}$.
However, it is not enough to prove Proposition~\ref{prop:good}, since $r^{-(d+1)/4} \ll r^{-d/2}$ and active frogs must reach a region whose distance from the center of the initial box above is of order $r^{-d/2}$.
We use another recursion to generate active frogs starting from the initial box and propagate one of them to the target region, within the required time $r^{-d/2}\delta_d(r)$. To make sure that this happens with high probability, we use Lemma~\ref{lem:CKN_cor} which is a consequence of a result obtained in \cite{CanKubNak25_IP} (see Proposition~\ref{prop:CKN} below) combined with a simple concentration bound. We reproduce the proof of Proposition~\ref{prop:CKN} in Appendix~\ref{app:CKN} for the reader's convenience.

\vspace{0.5em}


We close this section with some general notation.
Denote by $\E_r$, $E$ and $\bE_r$ the expectations associated to the probability measures $\P_r$, $P$ and $\bP_r$ stated in Section~\ref{subsect:main}, respectively.
Moreover, for each $i \in \{ 1,2,\infty \}$, the $\ell^i$-norm on $\R^d$ is designated by $\|\cdot\|_i$ and let $B_i(x,R):=\{ y \in \R^d:\|y-x\|_i \leq R \}$ be the $\ell^i$-ball in $\R^d$ of center $x \in \R^d$ and radius $R>0$.
Finally, throughout the paper, we use $c$ and $c'$ to denote arbitrary positive constants which may change from line to line but \emph{depend only on the dimension $d$}.

\section{Lower bound for the time constant}\label{sect:lowerbound}
In this section, we prove Theorem~\ref{thm:divergence}, which says that for all $r \in (0,1]$ sufficiently small, the time constant $\mu_r(x)$ is bounded from below by $\delta_d(r)\|x\|_1$, up to a multiplicative constant.
The basic idea for the proof is essentially the same as in~\cite[Sections~3 and 4]{BecDinDurHuoJun18}, and the task is to control chains of active frogs, which are constructed, roughly speaking, by splicing together the trajectories of several active frogs at some points.
As we state in Lemma~\ref{lem:T_chain} below, the first passage time $T(0,nx)$ is regarded as the minimum duration of chains of active frogs that start at $0$ and end at $nx$.
Hence, due to \eqref{eq:tc}, it suffices for the proof of Theorem~\ref{thm:divergence} to show that there is no chain of active frogs that starts from $0$ and reaches $nx$ within time $\delta_d(r)n\|x\|_1$ up to a constant factor.
To do this, we first derive an exponential probability bound for each chain (see Lemma~\ref{lem:SingleChainTail} below) and then use the union bound over all chains.

For the reader's convenience, we begin by formally describing a chain of active frogs.
Given a (deterministic or random) finite sequence $I=(I_\ell)_{\ell=1}^\nu$ in $\N$, the trajectories of active frogs are spliced together as follows:
The active frog forcibly assigned to $0$ is tracked until it has visited $I_1$ occupied sites, and then it is removed.
We activate only the frog on the most recently visited site and begin tracking it anew.
On the other hand, any frogs on the other visited sites are removed.
By repeating this procedure recursively for each active frogs, the chain of active frogs induced by $I$ is constructed.
That is, given the $\ell$-th active frog to be tracked (where $\ell \in [2,\nu)$), this active frog is tracked until it has visited $I_{\ell+1}$ occupied sites, after which it is removed.
Subsequently, we activate only the frog on the most recently visited site, begin tracking it anew, and remove any frogs on the other visited sites.

To make the above construction precise, let us first define for any $A \subset \Z^d$ and $n \in \N_0$,
\begin{align}\label{eq:range}
	\mathcal{R}_n^A:=\bigcup_{x \in A} \{ S_k^x:0 \leq k \leq n \},
\end{align}
which is the set of all sites visited by the simple random walks starting from $A$ on or before time $n$.
In particular, write $\mathcal{R}_n^x:=\mathcal{R}_n^{\{ x \}}$ for simplicity.
Next, suppose that an arbitrary finite sequence $I=(I_\ell)_{\ell=1}^\nu$ in $\N$ is given.
Setting $a_I(1):=0$ and $\sigma_I(1,0):=0$, we define for each integer $i \in [1,I_1]$,
\begin{align}
\label{eq:sigma1}
	\sigma_I(1,i):=\inf\Bigl\{ k>\sigma_I(1,i-1):S^{a_I(1)}_k \in \mathcal{O} \setminus \mathcal{R}^{a_I(1)}_{\sigma_I(1,i-1)} \Bigr\},
\end{align}
where $\mathcal{O}$ is the set of all occupied sites (see \eqref{eq:occupied}).
Tracking of $S_\cdot^{a_I(1)}$ stops at step $\sigma_I(1,I_1)$, which is the time when $S_\cdot^{a_I(1)}$ visits $I_1$ distinct occupied sites, excluding $0$.
If $\nu=1$, then we complete the construction and the chain of active frogs induced by $I$ is exactly the trajectory of $S_\cdot^0$ up to $\sigma_I(1,I_1)$ steps.
If $\nu \geq 2$, then we continue to track simple random walks recursively as follows:
Given $a_I(j)$, $\sigma_I(j,I_j)$, $1 \leq j \leq \ell$ (where $\ell \in [1,\nu)$), let
\begin{align*}
	a_I(\ell+1):=S_{\sigma_I(\ell,I_\ell)}^{a_I(\ell)},\quad \sigma_I(\ell+1,0):=0,
\end{align*}
and define for any integer $i \in [1,I_{\ell+1}]$, 
\begin{align*}
	&\sigma_I(\ell+1,i)\\
	&:=\inf\Biggl\{ k>\sigma_I(\ell+1,i-1):S^{a_I(\ell+1)}_k \in \mathcal{O} \setminus \Biggl( \bigcup_{j=1}^\ell \mathcal{R}^{a_I(j)}_{\sigma_I(j,I_j)} \cup \mathcal{R}^{a_I(\ell+1)}_{\sigma_I(\ell+1,i-1)} \Biggr) \Biggr\}.
\end{align*}

Our first observation is that there exists a chain of active frogs realizing the first passage time from $0$.

\begin{lem}\label{lem:T_chain}
Assume that $y \in \mathcal{O}$ and $T(0,y)<\infty$.
Then, there exists a (random) finite sequence $I=(I_\ell)_{\ell=1}^\nu$ in $\N$ such that
\begin{align*}
	T(0,y)=\sum_{\ell=1}^{\nu} \sigma_I(\ell,I_\ell).
\end{align*}
\end{lem}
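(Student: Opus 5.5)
Since $T(0,y)<\infty$, the infimum in \eqref{def:PassageTime} is over values in $\N_0\cup\{\infty\}$ and is therefore attained. I would start from an optimal path $0=x_0,x_1,\dots,x_m=y$ with $T(0,y)=\sum_{i=0}^{m-1}\tau(x_i,x_{i+1})$ and all terms finite. In particular every $x_i$ with $i<m$ is occupied, and $y\in\mathcal{O}$ by assumption. Among all optimal paths I would choose one with $m$ minimal. Then the $x_i$ are distinct: a repetition $x_i=x_j$ with $i<j$ could be excised without increasing the cost, since all costs are nonnegative, and this would contradict the minimality of $m$. Each step from $x_i$ to $x_{i+1}$ is realized by the walk $S^{x_i}$ at time $\tau(x_i,x_{i+1})$.

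Next I convert this path into a chain. Set $a(\ell)=x_{\ell-1}$ for $\ell=1,\dots,m$ and $\nu=m$. For each $\ell$, let $I_\ell$ be the number of distinct sites in $\mathcal{O}$ visited by $S^{x_{\ell-1}}$ during $(0,\tau(x_{\ell-1},x_\ell)]$ that were not already visited earlier in the construction. These are the sites not in $\bigcup_{j<\ell}\mathcal{R}^{x_{j-1}}_{\tau(x_{j-1},x_j)}$, and, for the first walk, excluding its starting point. I then have to check by induction on $\ell$ that $a_I(\ell)=x_{\ell-1}$ and $\sigma_I(\ell,I_\ell)=\tau(x_{\ell-1},x_\ell)$. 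Summing over $\ell$ then gives the claim.

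The main obstacle is that $x_\ell$ must be a \emph{new} occupied site at time $\tau(x_{\ell-1},x_\ell)$. Otherwise the counting in $\sigma_I$ would skip it, and $\sigma_I(\ell,I_\ell)$ would be the time of the last new site rather than the hitting time of $x_\ell$. Also $I_\ell\ge1$ is needed. There are two cases to rule out.

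\emph{Case 1: $x_\ell$ was visited earlier by the same walk $S^{x_{\ell-1}}$.} This is impossible, because $\tau$ is the first hitting time of $x_\ell$.

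\emph{Case 2: $x_\ell$ lies in $\mathcal{R}^{x_{j-1}}_{\tau(x_{j-1},x_j)}$ for some $j<\ell$.} Here $S^{x_{j-1}}$ hits $x_\ell$ at some time $s\le\tau(x_{j-1},x_j)$. Then the path $x_0,\dots,x_{j-1},x_\ell,\dots,x_m$ has cost at most the original cost. It is therefore optimal and strictly shorter, which contradicts the minimality of $m$. This holds even when $j=1$, because the site excluded for the first walk is $0=x_0$, and $x_\ell\neq x_0$ by distinctness.

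So $x_\ell$ is a new occupied site first hit at time $\tau(x_{\ell-1},x_\ell)$, and it is the $I_\ell$-th such site. Hence $\sigma_I(\ell,I_\ell)=\tau(x_{\ell-1},x_\ell)$ and $a_I(\ell+1)=S^{x_{\ell-1}}_{\tau}=x_\ell$, which completes the induction.
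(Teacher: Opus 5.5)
Your proposal is correct and follows essentially the same route as the paper: take an optimal path with distinct occupied sites, let $I_\ell$ count the new occupied sites that $S^{x_{\ell-1}}$ visits up to its arrival at $x_\ell$, and verify inductively that $\sigma_I(\ell,I_\ell)=\tau(x_{\ell-1},x_\ell)$. Your choice of a minimal-length optimal path makes explicit the step the paper only calls ``easy to see'', namely that $x_\ell$ does not lie in an earlier range. That step also follows from optimality plus distinctness alone: each excised step between distinct sites costs at least one unit, so the shortcut would be strictly cheaper.
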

\begin{proof}
Assume that $y \in \mathcal{O}$ and $T(0,y)<\infty$.
Due to \eqref{def:PassageTime}, there exist $\nu \in \N$ and distinct $x_0,x_1,\dots,x_\nu \in \mathcal{O}$ with $x_0=0$ and $x_\nu=y$ such that
\begin{align*}
	T(0,y)=\sum_{\ell=1}^\nu \tau(x_{\ell-1},x_\ell).
\end{align*}
We now choose a finite sequence $I=(I_\ell)_{\ell=1}^\nu$ in $\N$ as follows:
Denote by $I_1$ the number of distinct occupied sites (excluding $0$) that $S_\cdot^0$ visits up to and including its arrival at $x_1$.
Then, for any $\ell \in [2,\nu]$, we recursively define $I_\ell$ as the number of distinct occupied sites, excluding those in $\bigcup_{j=1}^{\ell-1} \mathcal{R}^{x_{j-1}}_{\tau(x_{j-1},x_j)}$, that $S_\cdot^{x_{\ell-1}}$ visits up to and including its arrival at $x_\ell$.
Since $x_0,x_1,\dots,x_\nu \in \mathcal{O}$ are distinct, it is easy to see that $\tau(x_{\ell-1},x_\ell)=\sigma_I(\ell,I_\ell)$ holds for each $\ell \in [1,\nu]$.
This implies that
\begin{align*}
	T(0,y)=\sum_{\ell=1}^\nu \tau(x_{\ell-1},x_\ell)=\sum_{\ell=1}^\nu \sigma_I(\ell,I_\ell),
\end{align*}
and hence we can construct the desired finite sequence $I=(I_\ell)_{\ell=1}^\nu$ in $\N$.
\end{proof}

From Lemma~\ref{lem:T_chain}, the key to proving Theorem~\ref{thm:divergence} is to control the behavior of chains of active frogs.
To do this, we prepare the following lemma, which provides some estimates for the duration and the range of the chain of active frogs induced by any deterministic finite sequence in $\N$.

\begin{lem}\label{lem:SingleChainTail}
There exist constants $\Cl{range},\Cl{srw}>0$ (which depend only on $d$) such that if $r \in (0,1]$ is small enough (depending on $d$), then the following results hold for all deterministic finite sequences $I=(I_\ell)_{\ell=1}^\nu$ in $\N$:
\begin{align*}
	\bP_r\Biggl( \sum_{\ell=1}^\nu \sigma_I(\ell,I_\ell)\le \Cr{range} \delta_d(r)^2\sum_{\ell=1}^\nu I_\ell \Biggr)
	\leq 4^{-\sum_{\ell=1}^\nu I_\ell},
\end{align*}
and for any $t>0$, 
\begin{align*}
	\bP_r\Biggl( \max_{\substack{1 \leq \ell \leq \nu\\0 \leq k \leq \sigma_I(\ell,I_\ell)}} \|S^{a_I(\ell)}_k\|_1 \geq \Cr{srw}\delta_d(r)t,\,\sum_{\ell=1}^\nu \sigma_I(\ell,I_\ell) \leq \delta_d(r)^2t \Biggr)
	\leq 4^{-t/\Cr{range}}.
\end{align*}
\end{lem}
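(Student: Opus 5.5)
The plan is to decompose the chain into its $N:=\sum_\ell I_\ell$ elementary ``inter-discovery'' segments, the stretches of walk between consecutive discoveries of a new occupied site, and to show that each segment is unlikely to be short. Concretely, write the total duration as $\sum_{\ell}\sum_{i=1}^{I_\ell}\bigl(\sigma_I(\ell,i)-\sigma_I(\ell,i-1)\bigr)$. We reveal the environment $\omega$ only along the trajectory, at sites not previously examined. Let $\mathcal{F}_{\ell,i}$ be the information generated by the walks and by the revealed occupation variables up to the $i$-th discovery in the $\ell$-th walk. At each newly visited site the occupation variable is a fresh $\text{Ber}(r)$ variable, independent of $\mathcal{F}_{\ell,i}$. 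Sites already visited (by this walk or by earlier walks of the chain) are excluded by the definition of $\sigma_I$, so they cannot be ``discovered'' again. The only thing that matters is the number of new sites the walk sees.

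The key one-step estimate is the following. Conditionally on $\mathcal{F}_{\ell,i-1}$, the probability that the next discovery happens within $s=c_0\delta_d(r)^2$ steps is at most the probability that the walk visits some fresh occupied site among its at most $|\mathcal{R}_s|$ distinct sites. This is bounded by $E[1-(1-r)^{|\mathcal{R}_s|}]\le rE|\mathcal{R}_s|$, where $\mathcal{R}_s$ is the range of a simple random walk run for $s$ steps. Here we use that, given the path, the new sites have i.i.d.\ $\text{Ber}(r)$ labels. Since $E|\mathcal{R}_s|\asymp s/\log s$ for $d=2$ and $\asymp s$ for $d\ge3$, the choice $s=c_0\delta_d(r)^2$ gives $rE|\mathcal{R}_s|\le c\,c_0$ in both cases. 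In $d=2$ this uses $\log s\asymp|\log r|$. Choosing $c_0$ small makes this at most $1/8$. Note that the walk after a stopping time is again a simple random walk (strong Markov), so this bound is uniform in the past.

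To conclude the first inequality, let $X_j$ be the indicator that the $j$-th segment is at least $s$ long. By the above, the $X_j$ stochastically dominate i.i.d.\ $\text{Ber}(7/8)$ variables. If the total duration is at most $\Cr{range}\delta_d(r)^2N$ with $\Cr{range}=c_0/2$, then fewer than $N/2$ segments are long. By a Chernoff bound (or directly, $\binom{N}{N/2}(1/8)^{N/2}\le 2^N 8^{-N/2}$) this has probability at most $4^{-N}$ after adjusting constants; taking $c_0$ slightly smaller if needed gives exactly $4^{-N}$. The main obstacle is making the conditional-independence bookkeeping rigorous: the revealed-site filtration across different walks of the chain, and the fact that the starting point $a_I(\ell+1)$ is occupied and already excluded. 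I would handle this by building $\omega$ lazily along the concatenated path, which is legitimate because $I$ is deterministic.

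For the second inequality, I would concatenate the $\nu$ walks into a single nearest-neighbour path of length $L=\sum_\ell\sigma_I(\ell,I_\ell)\le\delta_d(r)^2t$. By the strong Markov property at the times $\sigma_I(\ell,I_\ell)$, this path is a simple random walk started at $0$ run for a random time. Its maximal displacement up to the deterministic time $\delta_d(r)^2t$ exceeds $\Cr{srw}\delta_d(r)t$ with probability at most $2d\exp(-c\,\Cr{srw}^2\delta_d(r)^2t^2/(\delta_d(r)^2t))=2d\exp(-c\,\Cr{srw}^2t)$ by Azuma/Hoeffding applied to each coordinate together with a maximal inequality. Taking $\Cr{srw}$ large makes this at most $4^{-t/\Cr{range}}$ for $t\ge1$. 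For $t<1$, if $\Cr{srw}\delta_d(r)t>\delta_d(r)^2t$, i.e.\ $\Cr{srw}>\delta_d(r)$, the event is empty. Otherwise the Hoeffding bound still works, possibly after enlarging the constant so that the prefactor $2d$ is absorbed; the event is also trivially empty when $\delta_d(r)^2 t<1$.
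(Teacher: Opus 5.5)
Your proof of the first inequality follows the paper's argument. You split the chain into $N=\sum_\ell I_\ell$ inter-discovery segments, use the strong Markov property and the freshness of unexplored sites to bound each segment's chance of being short via the Dvoretzky--Erd\H{o}s bound on $E[\#\mathcal{R}^0_s]$, dominate by i.i.d.\ Bernoulli variables, and finish with a Chernoff-type bound. The differences from the paper are cosmetic:
\begin{itemize}
\item you use $1-(1-r)^m\le rm$ where the paper uses Jensen;
\item you reveal $\omega$ lazily, where the paper conditions on $(S^0_{\sigma_J(1,j-1)},\mathcal{R}^0_{\sigma_J(1,j-1)})=(z,\Gamma)$;
\item your explicit union bound is non-asymptotic in $N$, unlike the paper's appeal to the LDP.
\end{itemize}
Note that $2^Nq^{N/2}\le 4^{-N}$ needs a per-segment failure probability $q\le 2^{-6}$, not $1/8$; shrinking $c_0$ achieves this. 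Your reduction for the second inequality is also the paper's: concatenate into one simple random walk and bound its maximal displacement up to the deterministic time $\lfloor\delta_d(r)^2t\rfloor$.

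The gap is your treatment of small $t$. It is not true that for $t<1$ ``the Hoeffding bound still works after enlarging the constant.'' Since $\delta_d(r)\to\infty$, your escape $\Cr{srw}>\delta_d(r)$ is unavailable for small $r$. Emptiness of the event for $\delta_d(r)^2t<1$ only covers $t<\delta_d(r)^{-2}$. In the range between, the bare-walk probability is not small. Take $t=2\delta_d(r)^{-2}$ and $r$ small: then $\Cr{srw}\delta_d(r)t=2\Cr{srw}/\delta_d(r)<1$. Hence $P(\max_{k\le 2}\|S^0_k\|_1\ge\Cr{srw}\delta_d(r)t)=1>4^{-t/\Cr{range}}$, whatever $\Cr{srw}$ is. So no bound obtained by dropping the time constraint (such as your $2d\,e^{-c\Cr{srw}^2t}$) can be made $\le 4^{-t/\Cr{range}}$ uniformly as $t\searrow 0$.

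For small $t$, what makes the event unlikely is the constraint $\sum_\ell\sigma_I(\ell,I_\ell)\le\delta_d(r)^2t$ itself. Your argument uses it only to cap the time. The fix is a case split that uses the first inequality:
\begin{itemize}
\item If $t\le\Cr{range}N$, the event is contained in $\{\sum_\ell\sigma_I(\ell,I_\ell)\le\Cr{range}\delta_d(r)^2N\}$. Its probability is at most $4^{-N}\le 4^{-t/\Cr{range}}$.
\item If $t>\Cr{range}N$, then $t>\Cr{range}$ because $N\ge 1$. Your bound $2d\,e^{-c\Cr{srw}^2t}$ is then at most $4^{-t/\Cr{range}}$ for all such $t$, once $\Cr{srw}$ is taken large depending only on $d$.
\end{itemize}
The paper's own ``take $C$ large enough'' step has the same defect, since its bound $c^{-1}e^{-cC^2t}$ also has prefactor $c^{-1}>1$. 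This is harmless for Theorem~\ref{thm:divergence}, where the lemma is applied with $t=\Cr{range}Rn\to\infty$. For the lemma as stated, for all $t>0$, the case split is needed.
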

\begin{proof}
Fix a deterministic finite sequence $I=(I_\ell)_{\ell=1}^\nu$ in $\N$ and let $J:=\sum_{\ell=1}^\nu I_\ell$, which is also regarded as a single-term sequence in $\N$.
Then, since each $\sigma_I(\ell,i)$ is a stopping time for $S_\cdot^{a_I(\ell)}$ and $S_\cdot^x$'s are independent, the chain of active frogs induced by $I$ and the trajectory of $S_\cdot^0$ up to time $\sigma_J(1,J)$ have the same law under $\bP_r$.
Hence, it suffices to prove that there exist constants $\Cr{range},\Cr{srw}>0$ (which depend only on $d$) such that if $r \in (0,1]$ is small enough (depending only on $d$), then
\begin{align}\label{eq:GeneralChain}
	\bP_r\bigl( \sigma_J(1,J) \leq \Cr{range} \delta_d(r)^2J \bigr)
	\leq 4^{-J},
\end{align}
and for any $t>0$, 
\begin{align}\label{eq:max}
	\bP_r\biggl( \max_{0 \leq k \leq \sigma_J(1,J)}\|S^0_k\|_1 \geq \Cr{srw}\delta_d(r)t,\,\sigma_J(1,J) \leq \delta_d(r)^2 t \biggr)
	\leq 4^{-t/\Cr{range}}.
\end{align}

Our first task is to prove that there exists a constant $p(d,\Cr{range}) \in (0,1)$ such that $\lim_{\Cr{range}\searrow 0}p(d,\Cr{range})=1$ and 
\begin{align}\label{eq:SD_Geo}
	\bP_r\biggl( \sigma_J(1,1) \ge \frac{\Cr{range}}{2}\lceil \delta_d(r)^2\rceil \biggr) \geq p(d,\Cr{range})
\end{align}
for all $r>0$. To lighten the notation, we write $t_1=\Cr{range}\lceil \delta_d(r)^2 \rceil/2$. 
Note that the event on the left-hand side is equivalent to that $\omega(\cdot)=0$ on $\mathcal{R}^0_{(0,t_1)}$. Thus by using Fubini's theorem and Jensen's inequality, we get
\begin{align*}
	\bP_r(\sigma_J(1,1)\ge t_1)
	\geq E\Bigl[ (1-r)^{\# \mathcal{R}^0_{(0,t_1)}} \Bigr]
	\geq (1-r)^{E[\# \mathcal{R}^0_{t_1}]}.
\end{align*}
It is known from \cite[Theorem~1]{DvoErd51} that there exists a constant $c$ (which depends only on $d$) such that 
\begin{equation*}
    E[\# \mathcal{R}^0_n]\le c \times
    \begin{dcases}
        \frac{n}{\log n}, & \text{if } d=2,\\
        n, & \text{if } d\ge 3.
    \end{dcases}
\end{equation*}
This together with the definition of $t_1$ shows that for all $r>0$, $(1-r)^{E[\# \mathcal{R}^0_{t_1}]}$ is bounded from below by some $p(d,\Cr{range}) \in (0,1)$ with $\lim_{\Cr{range}\searrow 0}p(d,\Cr{range})=1$.

Next, let us extend \eqref{eq:SD_Geo} to the joint distribution of $(\sigma_J(1,i)-\sigma_J(1,i-1))_{i=1}^J$.
Due to \eqref{eq:sigma1}, if $\mathcal{R}_{\sigma_J(1,i-1)}^0=\Gamma$ holds for some $\Gamma \subset \Z^d$, then the difference $\sigma_J(1,i)-\sigma_J(1,i-1)$ can be written as
\begin{equation*}
	\sigma_J(1,i)-\sigma_J(1,i-1)=\inf\bigl\{ k>0:S_{k+\sigma_J(1,i-1)}^0 \in \mathcal{O} \setminus \Gamma \bigr\}.
\end{equation*}
Moreover, to shorten notation, let $\rho(z,\Gamma):=\inf\{ k>0:S_k^z \in \mathcal{O} \setminus \Gamma \}$ and define for any $j=1,\dots,J$,
\begin{align*}
	\mathcal{E}_j:=\bigcap_{i=1}^j\{\sigma_J(1,i)-\sigma_J(1,i-1)\ge t_1\}.
\end{align*}
The strong Markov property implies that for any $j=1,\dots,J$, $z \in \Z^d$ and $\Gamma \subset \Z^d$,
\begin{align*}
	&P\Bigl(\bigl\{ S_{\sigma_J(1,j-1)}^0=z,\,\mathcal{R}_{\sigma_J(1,j-1)}^0=\Gamma \bigr\} \cap \mathcal{E}_j \Bigr)\\
	&= P(\rho(z,\Gamma) \geq t_1) \,P\Bigl( \bigl\{ S_{\sigma_J(1,j-1)}^0=z,\,\mathcal{R}_{\sigma_J(1,j-1)}^0=\Gamma \bigr\} \cap \mathcal{E}_{j-1} \Bigr).
\end{align*}
Since the last two probabilities are independent under $\P_r$, taking the expectation with respect to $\P_r$ in the above expression yields that
\begin{align*}
	&\bP_r\Bigl( \bigl\{ S_{\sigma_J(1,j-1)}^0=z,\,\mathcal{R}_{\sigma_J(1,j-1)}^0=\Gamma \bigr\} \cap \mathcal{E}_j \Bigr)\\
	&= \E_r\Bigl[ P(\rho(z,\Gamma) \geq t_1) \,P\Bigl( \bigl\{ S_{\sigma_J(1,j-1)}^0=z,\,\mathcal{R}_{\sigma_J(1,j-1)}^0=\Gamma \bigr\} \cap \mathcal{E}_{j-1} \Bigr) \Bigr]\\
	&= \E_r[P(\rho(z,\Gamma) \geq t_1)] \,\E_r\Bigl[ P\Bigl( \bigl\{ S_{\sigma_J(1,j-1)}^0=z,\,\mathcal{R}_{\sigma_J(1,j-1)}^0=\Gamma \bigr\} \cap \mathcal{E}_{j-1} \Bigr) \Bigr].
\end{align*}
By the fact that $z=S_{\sigma_J(1,j-1)}^0 \subset \mathcal{R}_{\sigma_J(1,j-1)}^0=\Gamma$ and $\rho(0,\Gamma') \ge \sigma_J(1,1)$ for all $\Gamma' \subset \Z^d$ containing $0$, the translation invariance of $(\omega,S)$ and \eqref{eq:SD_Geo} prove that
\begin{align*}
	\E_r[P(\rho(z,\Gamma) \geq t_1)]
	&= \E_r[P(\rho(0,\Gamma-z) \geq t_1)]\\
	&\geq \bP_r(\sigma_J(1,1) \geq t_1)\\
	&\geq p(d,\Cr{range}).
\end{align*}
Combining these considerations together with induction, we arrive at
\begin{align*}
	\bP_r(\mathcal{E}_J)
	&= \sum_{\substack{z \in \Z^d\\\Gamma \subset \Z^d}} P\Bigl(\bigl\{ S_{\sigma_J(1,J-1)}^0=z,\,\mathcal{R}_{\sigma_J(1,J-1)}^0=\Gamma \bigr\} \cap \mathcal{E}_J \Bigr)\\
	&\geq \sum_{\substack{z \in \Z^d\\\Gamma \subset \Z^d}} p(d,\Cr{range}) \,\bP_r\Bigl(\bigl\{ S_{\sigma_J(1,J-1)}^0=z,\,\mathcal{R}_{\sigma_J(1,J-1)}^0=\Gamma \bigr\} \cap \mathcal{E}_{J-1} \Bigr)\\
	&\geq p(d,\Cr{range}) \times \bP_r(\mathcal{E}_{J-1})\\
	&\geq p(d,\Cr{range})^J.
\end{align*}
This implies that the family $((\sigma_J(1,i)-\sigma_J(1,i-1))/t_1 \rceil)_{i=1}^J$ stochastically dominates a family of independent Bernoulli random variables  $(Y_i)_{i=1}^J$ with parameter $p(d,\Cr{range})$. 
Then, by the large deviation principle for the sum of Bernoulli random variables, we find that
\begin{align*}
	&\bP_r\bigl( \sigma_J(1,J) \leq \Cr{range} \delta_d(r)^2J \bigr)\\
  &\le \bP_r\biggl( \sum_{i=1}^J Y_i \leq \frac{J}{2} \biggr)
  \le \exp\left(-\frac{J}{2}\left(\log \frac{1}{2p(d,\Cr{range})}+\log \frac{1}{2(1-p(d,\Cr{range}))}+o(1)\right)\right)
\end{align*}
as $J\to\infty$. Since $\lim_{\Cr{range}\searrow 1}p(d,\Cr{range})=1$, we obtain the desired bound \eqref{eq:GeneralChain} by taking $\Cr{range}>0$ sufficiently small.

For the proof of \eqref{eq:max}, we recall the result obtained in \cite[Proposition~2.1.2-(b)]{LawLim10_book}:
there exists a constant $c$ (which depends only on $d$) such that for all $n \in \N_0$ and $s>0$,
\begin{align*}
	P\Bigl( \max_{0 \leq k \leq n} \|S_k^0\|_1 \geq s\sqrt{n} \Bigr) \leq c^{-1}e^{-cs^2}.
\end{align*}
Hence, for any $C,t>0$,
\begin{align*}
	&\bP_r\biggl( \max_{0 \leq k \leq \sigma_J(1,J)}\|S^0_k\|_1 \geq C\delta_d(r)t,\,\sigma_J(1,J) \leq \delta_d(r)^2 t \biggr)\\
	&\leq \bP_r\biggl( \max_{0 \leq k \leq \lfloor \delta_d(r)^2t \rfloor} \|S_k^0\|_1 \geq (C\sqrt{t})\sqrt{\lfloor \delta_d(r)^2t \rfloor} \biggr)
	\leq c^{-1}\exp\{ -cC^2t \},
\end{align*}
and \eqref{eq:max} follows by taking $C$ large enough (depending only on $c$ and $\Cr{range}$).
\end{proof}

We are now in a position to prove Theorem~\ref{thm:divergence}.

\begin{proof}[\bf Proof of Theorem~\ref{thm:divergence}]
Set $A_1:=(2\Cr{srw})^{-1}$ (which depends only on $d$).
Moreover, for any $x \in \Z^d \setminus \{0\}$ and $n \in \N$, denote by $v_n^x$ the closest point to $nx$ in $\mathcal{O}$ (which is chosen with a deterministic rule to break ties).
From \cite[Proposition~{2.4}-(2)]{Kub20}, $\bP_r(\cdot|\omega(0)=1)$-a.s., $T(0,v_n^x)/n$ converges to $\mu_r(x)$ as $n\to\infty$.
Therefore, if for each $x \in \Z^d \setminus \{0\}$, 
\begin{align}\label{eq:LowerBound}
	\liminf_{n \to \infty} \frac{1}{n}T(0,v_n^x) \geq A_1\delta_d(r)\|x\|_1,\qquad \bP_r(\cdot|\omega(0)=1) \hyphen \as,
\end{align}
then the statement of Theorem~\ref{thm:divergence} is true for all $x \in \Z^d \setminus \{0\}$.
Since $\mu_r(\cdot)$ and $\|\cdot\|_1$ are norms on $\R^d$ and $A_1$ is independent of $x$, we can easily extend the statement to the case where $x \in \R^d$.

Assume that $r \in (0,1]$ is small enough (depending only on $d$) to justify the argument below.
Fix $x \in \Z^d \setminus \{0\}$ and let $n \in \N$ be large enough.
Lemma~\ref{lem:T_chain}, together with $v_n^x \in \mathcal{O}$ and the shape theorem~\eqref{eq:shape}, implies that $\bP_r(\cdot|\omega(0)=1)$-a.s., there exists a (random) finite sequence $I=(I_\ell)_{\ell=1}^\nu$ in $\N$ such that
\begin{align*}
	T(0,v_n^x)=\sum_{\ell=1}^\nu \sigma_I(\ell,I_\ell).
\end{align*}
Now, if~\eqref{eq:LowerBound} does not hold, then this sequence, which depends on $n$, belongs to the following set for infinitely many $n$:
\begin{align*}
	\mathcal{I}(n):=\mleft\{I=(I_\ell)_{\ell=1}^\nu\colon
	\begin{minipage}{20.8em}
	\begin{itemize}
	\setlength{\itemsep}{0.2em}
	\setlength{\leftskip}{-2.1em}
	\item $\nu \in \N$,
	\item $\max_{1 \leq \ell \leq \nu}\max_{0 \leq k \leq \sigma_I(\ell,I_\ell)} \| S_k^{a_I(\ell)} \|_1 \geq n\|x\|_1/2$,
	\item $\sum_{\ell=1}^\nu \sigma_I(\ell,I_\ell) \leq A_1\delta_d(r)n\|x\|_1$
	\end{itemize}
	\end{minipage}
	\mright\}.
\end{align*}
We are going to make case distinction by using the events
\begin{align*}
	&\Cl[E]{E1}(n):=\biggl\{ \|v_n^x-nx\|_1 > \frac{n\|x\|_1}{2} \biggr\},\\
	&\Cl[E]{E2}(n):=\biggl\{ \exists I \in \mathcal{I}(n) \text{ such that } \sum_{\ell=1}^\nu I_\ell>A_1(\Cr{range}\delta_d(r))^{-1}n\|x\|_1 \biggr\},\\
	&\Cl[E]{E3}(n):=\biggl\{\exists I \in \mathcal{I}(n) \text{ such that } \sum_{\ell=1}^\nu I_\ell \leq A_1(\Cr{range}\delta_d(r))^{-1}n\|x\|_1 \biggr\}.
\end{align*}
Then we have the bound
\begin{align}\label{eq:Borel-Cantelli}
	\bP_r\bigl( T(0,v_n^x)<A_1\delta_d(r)n\|x\|_1 \big| \omega(0)=1 \bigr)
	\leq \bP_r(\Cr{E1}(n))+\bP_r(\Cr{E2}(n))+\bP_r(\Cr{E3}(n)).
\end{align}
Note that the condition $\omega(0)=1$ can be omitted on the right side of \eqref{eq:Borel-Cantelli} since $\Cr{E1}(n)$, $\Cr{E2}(n)$ and $\Cr{E3}(n)$ are independent of the values of $\omega(0)$.
Once we prove that $\bP_r(\Cr{E1}(n))$, $\bP_r(\Cr{E2}(n))$ and $\bP_r(\Cr{E3}(n))$ are summable in $n$, the desired result \eqref{eq:LowerBound} follows from \eqref{eq:Borel-Cantelli} and the Borel--Cantelli lemma.

First, since $\Cr{E1}(n)$ says that $\omega(y)=0$ holds for all $y \in B_1(nx,n\|x\|_1/2)$, there exists a constant $c$ (which depends only on $d$) such that
\begin{align*}
	\bP_r(\Cr{E1}(n)) \leq (1-r)^{cn^d\|x\|_1^d}.
\end{align*}
Hence, $\bP_r(\Cr{E1}(n))$ is summable in $n$ due to $\log(1-r)<0$.

Let us next estimate $\bP_r(\Cr{E2}(n))$ and $\bP_r(\Cr{E3}(n))$.
The event $\Cr{E2}(n)$ means that a chain of active frogs reaches a remote point in a relatively short time but meets a large number of sleeping frogs. It has a small probability since the density of the sleeping frogs tends to zero. 
On the other hand, the event $\Cr{E3}(n)$ means that a chain of active frogs meets a small number of frogs even though it reaches a remote point in a relatively short time. It has a small probability since the number of possible chains is limited but one of them has to move fast. 

In order to make the above observations into a proof, for any $M \in \N$, we define 
\begin{align*}
	\mathbb{I}_M:=\bigcup_{\nu=1}^M\biggl\{ (I_\ell)_{\ell=1}^\nu \in \N^\nu:\sum_{\ell=1}^\nu I_\ell=M \biggr\}
\end{align*}
and count the number of elements in $\mathbb{I}_M$:
\begin{align}\label{eq:ChoiceIndices}
	\#\mathbb{I}_M
	= \sum_{\nu=1}^M \binom{M-1}{\nu-1}
	\leq 2^M,
\end{align}
where, in the first equality, we used the fact that $I_\ell \geq 1$ for all $\ell \in [1,\nu]$.
Moreover, for simplicity of notation, let us write
\begin{align*}
	R:=A_1(\Cr{range}\delta_d(r))^{-1}\|x\|_1.
\end{align*}
When $\Cr{E2}(n)$ occurs, there exist an integer $M>Rn$ and a finite sequence $I=(I_\ell)_{\ell=1}^\nu \in \mathbb{I}_M$ such that $\sum_{\ell=1}^\nu \sigma_I(\ell,I_\ell) \leq \Cr{range} \delta_d(r)^2\sum_{\ell=1}^\nu I_\ell$.
Hence, the union bound, Lemma~\ref{lem:SingleChainTail} and \eqref{eq:ChoiceIndices} imply that
\begin{align*}
	\bP_r(\Cr{E2}(n))
	&\leq \sum_{M=\lceil Rn \rceil}^\infty \sum_{I \in \mathbb{I}_M} \bP_r\biggl( \sum_{\ell=1}^\nu \sigma_I(\ell,I_\ell) \leq \Cr{range} \delta_d(r)^2\sum_{\ell=1}^\nu I_\ell \biggr)\\
	&\leq \sum_{M=\lceil Rn \rceil}^\infty 2^{-M} \leq 2^{-Rn+1},
\end{align*}
and $\bP_r(\Cr{E2}(n))$ is summable in $n$.
On the other hand, on the event $\Cr{E3}(n)$, there exists $I \in \bigcup_{M=1}^{\lfloor Rn \rfloor} \mathbb{I}_M$ such that
\begin{align*}
	\max_{\substack{1 \leq \ell \leq \nu\\ 0 \leq k \leq \sigma_I(\ell,I_\ell)}} \|S^{a_I(\ell)}_k\|_1 \geq \frac{n\|x\|_1}{2},\qquad \sum_{\ell=1}^\nu \sigma_I(\ell,I_\ell) \leq A_1\delta_d(r)n\|x\|_1.
\end{align*}
Using \eqref{eq:ChoiceIndices} and Lemma~\ref{lem:SingleChainTail} with $t:=A_1\delta_d(r)^{-1}n\|x\|_1=\Cr{range}Rn$, one has
\begin{align*}
	&\bP_r(\Cr{E3}(n))\\
	&\leq \sum_{M=1}^{\lfloor Rn \rfloor}\sum_{I \in \mathbb{I}_M} \bP_r\Biggl( \max_{\substack{1 \leq \ell \leq \nu\\ 0 \leq k \leq \sigma_I(\ell,I_\ell)}} \|S^{a_I(\ell)}_k\|_1 \geq \Cr{srw}\delta_d(r)t,\,\sum_{\ell=1}^\nu \sigma_I(\ell,I_\ell) \leq \delta_d(r)^2t \Biggr)\\
	&\leq Rn2^{Rn} \times 4^{-t/\Cr{range}}=Rn2^{-Rn}.
\end{align*}
Therefore, $\bP_r(\Cr{E3}(n))$ is also summable in $n$, and the proof is complete.
\end{proof}


\section{Upper bound for the time constant}\label{sect:upper}
The aim of this section is to prove Theorem~\ref{thm:upper}, which says that for all $r \in (0,1]$ sufficiently small, the time constant $\mu_r(x)$ is bounded from above by $\delta_d(r)\|x\|_1$, up to a multiplicative constant.
For the proof, we extract an independent structure from the propagation of active frogs and use a percolation argument.
To carry out this, it is necessary to restrict the first passage time as follows: for any $A \subset \Z^d$ and $x,y \in \Z^d$,
\begin{align}\label{eq:restr_fpt}
	T_A(x,y):=\inf\mleft\{ \sum_{i=0}^{m-1} \tau(x_i,x_{i+1}):
	\begin{minipage}{12.8em}
		$m \in \N$, $x_0,x_1,\dots,x_{m-1} \in A$,\\
		$x_m \in \Z^d$ with $x_0=x$, $x_m=y$
	\end{minipage}
	\mright\}.
\end{align}
Note that $T_A(x,y)$ depends only on $\omega(z),S_\cdot^z$, $z \in A$.
Moreover, throughout this section, for any $u,v \in \Z^d$, $u \sim v$ (resp.~$u \overset{*}{\sim} v$) means $\|u-v\|_1=1$ (resp.~$\|u-v\|_\infty=1$).

We begin by defining a ``good'' propagation of active frogs that derives the desired factor $\delta_d(r)$.
Proposition~\ref{prop:good} below, which is the key ingredient in the proof of Theorem~\ref{thm:upper}, guarantees that the good propagation occurs with high probability.
Since the proof of Proposition~\ref{prop:good} is a little long, in Section~\ref{subsect:pf_upper}, we assume Proposition~\ref{prop:good} and prove Theorem~\ref{thm:upper} for now.
The proof of Proposition~\ref{prop:good} is given in Section~\ref{subsect:good} and consists of two stages.
The first stage involves activating many sleeping frogs in a large box: Section~\ref{subsect:sowing} explains how to sow seeds of active frogs, and Section~\ref{subsect:activating} carries out the activation of sleeping frogs by using the sown seeds.
In the second stage (Section~\ref{subsect:recursion}), we propagate the active frogs generated in the first stage outside the large box and recursively extend the activation of sleeping frogs into more remote regions.

\subsection{Percolation argument}\label{subsect:pf_upper}
The proof of Theorem~\ref{thm:upper} is based on a percolation construction defined as follows.
Let $\epsilon=\epsilon(d)$ and $\rho=\rho(d)$ be small and large positive constants to be chosen later (see \eqref{eq:eps} and \eqref{eq:rho} below), respectively.
Then, a site $v$ of $\Z^d$ is said to be \emph{$r$-good} if the following conditions are satisfied (see Figure~\ref{fig:good}):
\begin{itemize}
\item $B_\infty(\lceil r^{-d/2} \rceil v,r^{-(1/2+\epsilon)})$ contains an occupied site.
\item For any occupied site $x \in B_\infty(\lceil r^{-d/2} \rceil v,r^{-(1/2+\epsilon)})$ and $u \sim v$, there exists an occupied site $y \in B_\infty(\lceil r^{-d/2} \rceil u,r^{-(1/2+\epsilon)})$ such that
\begin{align*}
	T_{B_\infty(\lceil r^{-d/2} \rceil v,2\lceil r^{-d/2} \rceil)}(x,y)
	\leq \rho r^{-d/2} \delta_d(r).
\end{align*}
\end{itemize}
\begin{figure}
    \centering
    \includegraphics[width=0.6\linewidth]{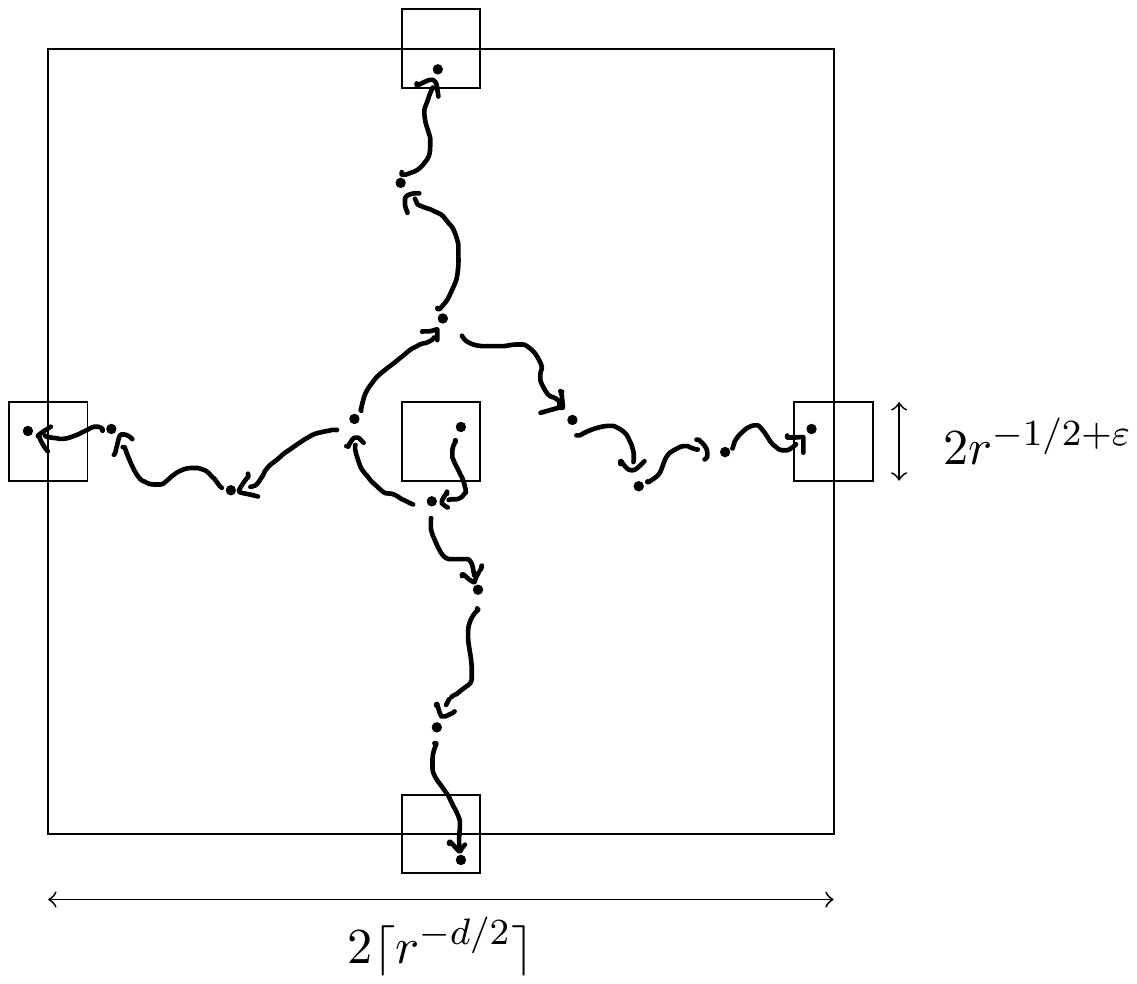}
    \caption{Schematic picture of the $r$-good event. Starting from any frog in the box of size $2r^{-(1/2+\epsilon)}$ at the center, we can reach any of the boxes in the coordinate directions in time comparable to $r^{-d/2} \delta_d(r)$.}
    \label{fig:good}
\end{figure}

It is generally difficult to extract an independent structure from the propagation of active frogs, since observing it requires keeping track of which frog activates which other.
The second condition above plays a crucial role in overcoming this difficulty.
Actually, the second condition can be interpreted as stating that \emph{any} frog starting from a specific box activates \emph{some} frog in each of the surrounding boxes.
Thus, the activation relation between two neighboring boxes can be disregarded, and we can extract an independent structure from the propagation of active frogs.

The key to the proof of Theorem~\ref{thm:upper} is the following proposition, which guarantees that every site $v$ is $r$-good with high probability when $r$ is small enough.

\begin{prop}\label{prop:good}
$\lim_{r \searrow 0} \sup_{v \in \Z^d} \bP_r(\text{$v$ is $r$-good})=1$ holds.
\end{prop}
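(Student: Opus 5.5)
By translation invariance of $\bP_r$ the probability does not depend on $v$, so I take $v=0$. Write $L:=\lceil r^{-d/2}\rceil$ and $b:=r^{-(1/2+\epsilon)}$. I will follow the three-step strategy from the introduction.

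The first condition fails with probability $(1-r)^{cb^d}\le \exp(-c r^{-d\epsilon})\to0$, since $b^d r = r^{-d\epsilon}$. The same bound gives, with high probability, that every box $B_\infty(Lu,b)$ with $u\sim0$ contains an occupied site.

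The main work is the second condition. It is uniform over the starting occupied site $x\in B_\infty(0,b)$, so I first handle a fixed $x$ and then take a union bound over the at most $cb^d$ candidate sites. This forces every failure probability for a fixed $x$ to be $o(r^{d(1/2+\epsilon)})$, so I aim for stretched-exponential bounds in $r^{-1}$.

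\textbf{Step 1 (sowing).} The frog started at $x$ runs for time $r^{-(1+3\epsilon)}$. In $d\ge3$ it visits about $r^{-(1+3\epsilon)}$ distinct sites, and in $d=2$ about $r^{-(1+3\epsilon)}/|\log r|$ sites. Hence it meets roughly $r^{-3\epsilon}$ occupied sites, up to logarithms. Iterating with the newly woken frogs, I would show that within time $O(r^{-(1+3\epsilon)})$ the active frogs seed every sub-box of side $r^{-(1/2+\epsilon)}$ around $x$, up to the scale needed for Step 2. This is the content of the planned Proposition on sowing. Its proof uses a branching comparison: each active frog generates at least two new ones with probability close to $1$, and Chernoff bounds control the lower-order terms.

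\textbf{Step 2 (activating).} Using these seeds, I show that every site of $B_\infty(x,r^{-(d+1)/4})$ is visited within time $5dr^{-(d+1)/2}$. Each site is within distance $r^{-(1/2+\epsilon)}$ of many seeds, each of which hits it with non-negligible probability in diffusive time. A union bound over sites, together with independence of the walks, gives failure probability $\exp(-r^{-c})$. Since the box contains about $r\cdot r^{-d(d+1)/4}$ occupied sites, this yields many active frogs. For $d\ge2$ the time $r^{-(d+1)/2}$ is at most $r^{-d/2}\delta_d(r)$ up to a constant.

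\textbf{Step 3 (recursion to distance $L$).} This is the step I expect to be the main obstacle. The constraint is that all frogs used must start in $B_\infty(0,2L)$ and the total time must stay within $\rho L\delta_d(r)$. I plan to chain boxes of side $r^{-(d+1)/4}$ along each coordinate direction toward $Lu$. A mass of $N$ active frogs placed near the next box center should, by Lemma~\ref{lem:CKN_cor} (derived from Proposition~\ref{prop:CKN}), move one frog a distance $D$ in time about $D\delta_d(r)$, once $N$ is large. Step 2 then re-activates that box, and the cost per unit distance remains of order $\delta_d(r)$.

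The difficulty is keeping the error probability summable over the roughly $L/r^{-(d+1)/4}$ stages while never leaving $B_\infty(0,2L)$. I would make each stage's failure probability $\exp(-r^{-c})$ and choose $\epsilon=\epsilon(d)$ small enough that all the polynomial union bounds are absorbed. Finally, a frog must land in $B_\infty(Lu,b)$ on an occupied site. For this I use that the last activated box around $Lu$ has every site visited, and I take $\rho$ large to absorb the constants.
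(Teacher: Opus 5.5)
Your proposal has a genuine gap in Step~3: the scheme you describe does not fit within the time budget $\rho L\delta_d(r)$.

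You chain boxes of side $\ell:=r^{-(d+1)/4}$ and let Step~2 re-activate each one. But Step~2 costs time $5d\,r^{-(d+1)/2}=5d\,\ell^2$, which is diffusive on the scale $\ell$. So each stage costs $\asymp\ell$ per unit distance, and $\ell\gg\delta_d(r)$ because $(d+1)/4>1/2$. With $L/\ell\asymp r^{-(d-1)/4}$ stages, the total time is of order $r^{-(3d+1)/4}$. This exceeds $\rho L\delta_d(r)$ by a factor $\asymp r^{-(d-1)/4}$ (divided by $|\log r|^{1/2}$ when $d=2$).

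The claim that a mass of frogs moves a distance $D$ in time $\asymp D\delta_d(r)$ also does not follow from a single application of Lemma~\ref{lem:CKN_cor}. That lemma only transfers frogs over distance $\sqrt n$ in time $n$. Ballistic motion at speed $\delta_d(r)^{-1}$ requires iterating it on the scale $\sqrt n\asymp\delta_d(r)$, while carrying a \emph{population} of frogs rather than one frog. That iteration is the missing idea, and once you have it, no re-activation is needed.

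In the paper, $\Lambda_r$ is activated only once (Proposition~\ref{prop:activating}). Then, in each direction $\xi\sim0$, one runs $\Gamma_{r,i}(\xi)=\mathcal{R}^{\Gamma_{r,i-1}(\xi)}_{\nu_d(r)}\cap Q_{r,i}(\xi)\cap\mathcal{O}$ on boxes of side $\asymp\delta_d(r)$, with time $\nu_d(r)\asymp\delta_d(r)^2$ per step. One only asks that $\#\Gamma_{r,i}(\xi)$ stay above a constant multiple of $|\log r|$. Lemma~\ref{lem:CKN_cor} gives failure probability at most $2r^{d/2}$ per step, and there are at most $r^{-(d-1)/2}$ steps. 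Hence the total failure probability is $O(\sqrt r)$ (Lemma~\ref{lem:connecting}) and the total time is $O(r^{-d/2}\delta_d(r))$.

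Moreover, $\Gamma_{r,0}(\xi)$ consists of \emph{all} occupied sites of $Q_{r,0}(\xi)\subset\Lambda_r$. On $\mathcal{A}_r$ these are reached from every occupied $x\in\Theta_r^{\mathrm{in}}(0)$ simultaneously, so this step needs no union bound over $x$. Your plan of stretched-exponential per-stage bounds could not work on this scale in $d=2$ anyway. A box of side $\asymp\delta_2(r)$ holds only $\asymp|\log r|$ occupied sites and is empty with probability polynomial in $r$.

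Steps~1 and~2 have smaller problems.

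As written, Step~1 seeds all mesoscopic boxes up to distance $r^{-(d+1)/4}$ within total time $O(r^{-(1+3\epsilon)})$. This cannot hold with high probability. By the chain estimates of Section~\ref{sect:lowerbound} (Lemma~\ref{lem:SingleChainTail}), covering distance $D$ takes time $\gtrsim D\delta_d(r)$, and $r^{-(d+1)/4}\delta_d(r)\gg r^{-(1+3\epsilon)}$. The paper instead spends $r^{-(1+3\epsilon)}$ per mesoscopic hop and chains at most $r^{-(d-1)/4+2\epsilon}$ hops, for a total of at most $2r^{-(d+1)/2}$ (Lemma~\ref{lem:W}).

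In Step~2, sowing guarantees only about one seed per mesoscopic box. For $d\ge3$, a seed at distance $r^{-(1/2+\epsilon)}$ hits a given site at all only with probability $\asymp r^{(d-2)(1/2+\epsilon)}$, so your local argument (``many seeds nearby, each with non-negligible hitting probability'') fails. One must use all $\#V_r$ seeds in $\Lambda_r$ with the full time $4d\,r^{-(d+1)/2}$. Each seed hits $y$ with probability $\gtrsim r^{(d+1)(d-2)/4}$, and $\#V_r\cdot r^{(d+1)(d-2)/4}=r^{-1/2+2d\epsilon}\to\infty$ (Lemma~\ref{lem:all_visit}).
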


We postpone the proof of Proposition~\ref{prop:good} to Section~\ref{subsect:good} and prove Theorem~\ref{thm:upper} assuming it. 
When a sequence $\gamma=(v_0,v_1,\dots,v_\ell)$ in $\Z^d$ satisfies that $v_0,v_1,\dots,v_\ell$ are $r$-good and $v_i \sim v_{i+1}$ for all $i \in [0,\ell-1]$, we call $\gamma$ an $r$-good path (from $v_0$ to $v_\ell$) and write $\#\gamma:=\ell$ for the length of $\gamma$.
Then, define for any $u,v \in \Z^d$,
\begin{align*}
	D_r(u,v):=\inf\{ \#\gamma:\text{$\gamma$ is an $r$-good path from $u$ to $v$} \},
\end{align*}
with the convention that $\inf\emptyset:=\infty$.


\begin{proof}[\bf Proof of Theorem~\ref{thm:upper}]
By the definition of $r$-good site, $(\1{\{ \text{$v$ is $r$-good} \}})_{v \in \Z^d}$ is a finitely dependent family of random variables taking values in $\{0,1\}$.
It follows from Proposition~\ref{prop:good} and a stochastic domination (see for instance \cite[Theorem~{7.65}]{Gri99_book}) that whenever $r$ is small enough (depending on $d$), $(\1{\{ \text{$v$ is $r$-good} \}})_{v \in \Z^d}$ stochastically dominates the independent Bernoulli site percolation on $\Z^d$ with parameter $(1+p_c)/2$ (where $p_c=p_c(d) \in (0,1)$ is the critical probability of independent Bernoulli site percolation on $\Z^d$).
This combined with \cite[(2.2) and Corollary~{2.2}]{GarMar10} implies that if $r$ is small enough (depending on $d$), then there exists a constant $C \geq 1$ (which depends only on $d$) such that for all $v \in \Z^d$ and $t \geq C\|v\|_\infty$,
\begin{align}\label{eq:chemical}
	\bP_r\Bigl( \inf\bigl\{ D_r(u,u'):u \in B_\infty(0,\sqrt{t}),\,u' \in B_\infty(v,\sqrt{t}) \bigr\} \geq t \Bigr)
	\leq C\exp\{ -\sqrt{t}/C \}.
\end{align}

Fix $x \in \Z^d \setminus \{0\}$ and let $r$ be small enough to establish \eqref{eq:chemical} for all $v \in \Z^d$ and $t \geq C\|v\|_\infty$.
In addition, write $R:=\lceil r^{-d/2} \rceil$ for simplicity of notation, and let $n \in \N$ be large enough to satisfy
\begin{align}\label{eq:tn}
	t_n:=2CR^{-1}n\|x\|_\infty \geq r^{-(1+2\epsilon)},\qquad n\|x\|_\infty \geq R.
\end{align}
Then, we consider the events
\begin{align*}
	&\Cl[G]{upper1}(n):=\Bigl\{ \inf\bigl\{ D_r(u,u'):u \in B_\infty(0,\sqrt{t_n}),\,u' \in B_\infty(\tilde{nx},\sqrt{t_n}) \bigr\}<t_n \Bigr\},\\
	&\Cl[G]{upper2}(n):=\biggl\{ \sup_{y \in B_\infty(0,2R\sqrt{t_n})} T(0,y) \leq t_n,\,\sup_{z \in B_\infty(R\tilde{nx},2R\sqrt{t_n}) \cap \mathcal{O}} T(z,nx) \leq t_n \biggr\},
\end{align*}
where $\tilde{nx}$ is the site of $\Z^d$ such that $nx \in B_\infty(R\tilde{nx},R)$ (which is chosen with a deterministic rule to break ties).
On the event $\Cr{upper1}(n)$, there exists an $r$-good path $(v_i)_{i=0}^\ell$ with $\ell<t_n$, $v_0 \in B_\infty(0,\sqrt{t_n})$ and $v_\ell \in B_\infty(\tilde{nx},\sqrt{t_n})$.
Hence, the definition of $r$-good site and the triangle inequality \eqref{eq:triangle} imply that for some occupied sites $y_i \in B_\infty(Rv_i,r^{-(1/2+\epsilon)})$, $0 \leq i \leq \ell$,
\begin{align*}
	T(y_0,y_\ell) \leq \sum_{i=0}^{\ell-1} T_{B_\infty(Rv_i,2R)}(y_i,y_{i+1}) \leq \rho r^{-d/2} \delta_d(r)t_n.
\end{align*}
Since, by \eqref{eq:tn},
\begin{align*}
	&y_0 \in B_\infty(Rv_0,r^{-(1/2+\epsilon)}) \subset B_\infty(0,2R\sqrt{t_n}),\\
	&y_\ell \in B_\infty(Rv_\ell,r^{-(1/2+\epsilon)}) \subset B_\infty(R\tilde{nx},2R\sqrt{t_n}),
\end{align*}
the triangle inequality \eqref{eq:triangle} implies that on $\Cr{upper1}(n) \cap \Cr{upper2}(n) \cap \{ \omega(0)=1 \}$,
\begin{align*}
	T(0,nx)
	&\leq T(0,y_0)+T(y_0,y_\ell)+T(y_\ell,nx)\\
	&\leq t_n+\rho r^{-d/2}\delta_d(r)t_n+t_n
	\leq 6C\rho \delta_d(r)n\|x\|_1.
\end{align*}
Therefore, once both $\bP_r(\Cr{upper1}(n)|\omega(0)=1)$ and $\bP_r(\Cr{upper2}(n)|\omega(0)=1)$ converge to one as $n \to \infty$, one has
\begin{align*}
	\lim_{n \to \infty}\bP_r\biggl( \frac{1}{n}T(0,nx) \leq 6C\rho\delta_d(r)\|x\|_1 \bigg| \omega(0)=1 \biggr)=1,
\end{align*}
and it follows by \eqref{eq:tc} that $\mu_r(x) \leq 6C\rho\delta_d(r)\|x\|_1$ holds for all $x \in \Z^d \setminus \{0\}$.
Since $\mu_r(\cdot)$ and $\|\cdot\|_1$ are norms on $\R^d$, this inequality can be easily extended to the case where $x \in \R^d$, and the theorem follows by taking $A_2:=6C\rho$. 

It remains to show that both $\bP_r(\Cr{upper1}(n)|\omega(0)=1)$ and $\bP_r(\Cr{upper2}(n)|\omega(0)=1)$ converge to one as $n \to \infty$.
Use \eqref{eq:tn} to obtain
\begin{align*}
	C\|\tilde{nx}\|_\infty \leq CR^{-1}(R+n\|x\|_\infty) \leq 2CR^{-1}n\|x\|_\infty=t_n.
\end{align*}
This together with \eqref{eq:chemical} implies that
\begin{align*}
	\lim_{n \to \infty}\bP_r(\Cr{upper1}(n)|\omega(0)=1) \geq  \lim_{n \to \infty}\bigl( 1-C\exp\{ -\sqrt{t_n}/C \} \bigr)=1.
\end{align*}
On the other hand, to estimate $\bP(\Cr{upper2}(n)|\omega(0)=1)$, we recall the result obtained in \cite[Proposition~{2.4}]{Kub19}:
there exist constants $\alpha,C'>0$ (which may depend on $r$) such that for all $y \in \Z^d$ and $t \geq C'\|y\|_1$,
\begin{align*}
	\bP_r(T(0,y) \geq t|\omega(0)=1) \leq C'\exp\{ -t^\alpha/C' \}.
\end{align*}
Since $\|z-nx\|_\infty \leq 3R\sqrt{t_n}$ if $z \in B_\infty(R\tilde{nx},2R\sqrt{t_n})$, it follows from the union bound, the translation invariance of $(\omega,S)$ and \eqref{eq:tc_bound} that for all large $n$,
\begin{align*}
	\bP_r(\Cr{upper2}(n)^c|\omega(0)=1)
	&\leq 2\sum_{y \in B_\infty(0,3R\sqrt{t_n})} \bP_r(T(0,y)>t_n|\omega(0)=1)\\
	&\leq 2C'(\# B_\infty(0,3R\sqrt{t_n}))\exp\{ -t_n^\alpha/C' \}.
\end{align*}
Clearly, the rightmost side converges to zero as $n \to \infty$, and hence $\bP_r(\Cr{upper2}(n)|\omega(0)=1)$ converges to one as $n \to \infty$.
\end{proof}

\subsection{Proof of Proposition~\ref{prop:good}}\label{subsect:good}
The aim of this subsection is to prove Proposition~\ref{prop:good}.
The proof relies on a recursive argument based on the intuition that if there are many active frogs in a certain box, then they activate many sleeping frogs around the box.
To this end, we first quote a result obtained in \cite{CanKubNak25_IP}.

\begin{prop}[{\!\!\cite[Lemma~{2.7}]{CanKubNak25_IP}}]\label{prop:CKN}
Let $0<\delta<1$.
There exists a constant $\Cr{CKN} \in (0,1)$ (which depends only on $d$ and $\delta$) such that for any integer $n \geq 2$ and for any finite subsets $A$ and $B$ of $\Z^d$ with $\max\{ \| x-y \|_2:x \in A,\,y \in B \} \leq \sqrt{n}$ and $\# B \geq \delta n^{d/2}$,
\begin{align}\label{eq:CKN_cor}
	P\Bigl( \#\bigl( \mathcal{R}_n^A \cap B \bigr)<\min\{ \Cr{CKN}\phi_d(n)\,\# A,(1-\delta)\,\# B \} \Bigr) \leq \exp\{ -\Cr{CKN}\,\# A \},
\end{align}
where $\phi_d(\cdot)$ is the function on $(0,\infty)$ given by
\begin{align*}
	\phi_d(t):=
	\begin{dcases*}
		\frac{t}{\log t}, & if $d=2$,\\
		t, & if $d \geq 3$.
	\end{dcases*}
\end{align*}
\end{prop}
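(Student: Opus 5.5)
We prove Proposition~\ref{prop:CKN} by a two-stage argument: first a second-moment (or expectation) bound showing that a single walk started within distance $\sqrt{n}$ of $B$ typically visits $\gtrsim \phi_d(n)$ distinct sites of $B$ (as long as $B$ is not yet mostly covered), and then a concentration argument over the $\#A$ independent walks. Throughout, the walks $S^x$, $x\in A$, are independent, and we process them one at a time. Let $x_1,\dots,x_m$ enumerate $A$ ($m=\#A$) and set $U_j:=B\setminus \mathcal{R}^{\{x_1,\dots,x_{j}\}}_n$, the part of $B$ still unvisited after the first $j$ walks.

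The key single-walk estimate is as follows: there are constants $c_0,q>0$ depending on $d,\delta$ such that for any $x$ with $\max_{y\in B}\|x-y\|_2\le\sqrt n$ and any $U\subset B$ with $\#U\ge \delta n^{d/2}$,
\begin{align*}
	P\bigl(\#(\mathcal{R}^x_n\cap U)\ge c_0\phi_d(n)\bigr)\ge q.
\end{align*}
To prove it, we compute the first moment $E[\#(\mathcal{R}_n^x\cap U)]=\sum_{y\in U}P(\text{$S^x$ hits $y$ by time $n$})$. By the local CLT and standard hitting estimates, each $y$ within distance $\sqrt n$ is hit by time $n$ with probability $\ge c\,n^{1-d/2}$ for $d\ge3$ and $\ge c/\log n$ for $d=2$. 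Since $\#U\ge\delta n^{d/2}$, the mean is therefore $\ge c\,\phi_d(n)$. For the second moment we use $\#(\mathcal{R}_n^x\cap U)\le\#\mathcal{R}_n^x$ together with the known bound $E[(\#\mathcal{R}_n)^2]\le c\,\phi_d(n)^2$ (Dvoretzky--Erd\H{o}s type range estimates). Paley--Zygmund then gives the claim with $c_0$ equal to half the mean constant.

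Next, we iterate. Stop as soon as $\#(B\setminus U_j)\ge(1-\delta)\#B$; if this happens, the event in \eqref{eq:CKN_cor} fails and we are done. Otherwise $\#U_{j-1}\ge\delta\#B\ge\delta^2 n^{d/2}$ for every $j$ (we apply the single-walk estimate with $\delta^2$ in place of $\delta$). Conditionally on the first $j-1$ walks, walk $j$ independently adds at least $c_0\phi_d(n)$ new sites of $B$ with probability $\ge q$. Hence the number of ``successful'' walks stochastically dominates a $\mathrm{Bin}(m,q)$ variable. By a Chernoff bound, $P(\mathrm{Bin}(m,q)<qm/2)\le e^{-c'm}$, and on the complementary event we get $\#(\mathcal{R}^A_n\cap B)\ge (qm/2)c_0\phi_d(n)$. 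Taking $\Cr{CKN}:=\min\{qc_0/2,\,c'\}\wedge 1/2$ yields \eqref{eq:CKN_cor}. Small $n$ (where $\phi_d(n)$ is bounded) is handled by shrinking the constant, since each walk trivially visits its neighbor-sites... or more simply, for bounded $n$ the bound is absorbed into $\Cr{CKN}$.

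The main obstacle is the single-walk lower bound in $d=2$: the hitting probability of a point at distance $\sqrt n$ by time $n$ is only of order $1/\log n$, and the second moment of the range, $E[(\#\mathcal{R}_n)^2]\asymp (n/\log n)^2$, must be matched carefully so that Paley--Zygmund gives a constant $q$ rather than one decaying in $n$. I would first use the sharp results $\#\mathcal{R}_n\log n/n\to\pi$ in $L^2$ (Dvoretzky--Erd\H{o}s/Jain--Pruitt) for the second moment, and the Green-function estimate $P(\text{hit } y \text{ before } n)\asymp \log(n/\|y\|^2\vee 2)/\log n$ for the first moment. Restricting to $y\in U$ with $\|y-x\|_2\ge \sqrt n/K$ loses only a fraction of $U$ while keeping the hitting probability $\ge c/\log n$.
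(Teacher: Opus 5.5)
Your proposal is correct and is essentially the paper's proof in the appendix. The paper also uses a Paley--Zygmund lower bound for a single walk on any subset of $B$ of size at least $\delta^2 n^{d/2}$ (Lemma~\ref{lem:range_lower} with $\epsilon=\delta^2$), processes the walks from $A$ one at a time until $B$ is $(1-\delta)$-covered, and finishes with a Chernoff bound for adapted Bernoulli indicators (Lemma~\ref{lem:chernoff}), which is equivalent to your binomial domination.

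The one minor difference is the second moment. The paper uses the strong Markov property to get $E[\{\#(\mathcal{R}_n^0\cap\Gamma)\}^2]\le 3E[\#(\mathcal{R}_n^0\cap\Gamma)]\sup_{x\in\Gamma}E[\#(\mathcal{R}_n^x\cap\Gamma)]$, so it needs only $E[\#\mathcal{R}_n^0]\le c\phi_d(n)$. You instead cite $E[(\#\mathcal{R}_n)^2]\le c\phi_d(n)^2$, which in fact follows from that same bound with $\Gamma=\Z^d$. Also, \eqref{eq:AMP} already gives a hitting probability of at least $c/\log n$ for every $y$ within distance $\sqrt{n}$, uniformly in $n\ge 2$. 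So your restriction to far-away points and your small-$n$ caveat are unnecessary.
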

We reproduce the proof of Proposition~\ref{prop:CKN} in the appendix for the convenience of the reader.

The aim of \cite{CanKubNak25_IP} is to estimate the upper large deviation probability of the first passage time.
Proposition~\ref{prop:CKN} is used there to successively generate a sufficient number of active frogs.
It also plays the same role in the present article, with a certain modification as stated in the following lemma.

\begin{lem}\label{lem:CKN_cor}
Let $0<\delta<1$.
There exists a constant $\Cl{CKN} \in (0,1)$ (which depends only on $d$ and $\delta$) such that for any integer $n \geq 2$ and for any finite subsets $A$ and $B$ of $\Z^d$ with $\max\{ \| x-y \|_2:x \in A,\,y \in B \} \leq \sqrt{n}$ and $\# B \geq \delta n^{d/2}$,
\begin{align*}
	&\bP_r\Bigl( \#(\mathcal{R}_n^A \cap B \cap \mathcal{O})<\frac{r}{2}\min\{ \Cr{CKN}\phi_d(n)\,\# A,(1-\delta)\,\# B \} \Bigr)\\
	&\leq \exp\{ -\Cr{CKN}\,\# A \}+\exp\Bigl\{ -\frac{r}{8}\min\{ \Cr{CKN}\phi_d(n)\,\# A,(1-\delta)\,\# B \} \Bigr\}.
\end{align*}
where $\phi_d(\cdot)$ is the function given in Proposition~\ref{prop:CKN}.
\end{lem}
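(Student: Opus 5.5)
The idea is to condition on the random walks and use the independence of $\omega$ from $S$, together with the fact that $\mathcal{R}_n^A$ does not depend on $\omega$. Set
\begin{align*}
	m:=\min\{ \Cr{CKN}\phi_d(n)\,\# A,(1-\delta)\,\# B \},
\end{align*}
where $\Cr{CKN}$ is the constant of Proposition~\ref{prop:CKN} for the given $\delta$; the lemma will then hold with the same constant. Define the $S$-measurable event $G:=\{ \#(\mathcal{R}_n^A \cap B) \geq m \}$. We split the probability in the statement as
\begin{align*}
	\bP_r\Bigl( \#(\mathcal{R}_n^A \cap B \cap \mathcal{O})<\frac{r}{2}m \Bigr)
	\leq P(G^c)+\bP_r\Bigl( G \cap \Bigl\{ \#(\mathcal{R}_n^A \cap B \cap \mathcal{O})<\frac{r}{2}m \Bigr\} \Bigr).
\end{align*}
Proposition~\ref{prop:CKN} applies directly and gives $P(G^c) \leq \exp\{ -\Cr{CKN}\,\# A \}$.

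For the second term, fix a realization of $S$ in $G$ and write $\Gamma:=\mathcal{R}_n^A \cap B$. Since $\omega$ is independent of $S$ and $\Gamma$ is a deterministic finite set with $\#\Gamma \geq m$, the quantity $\#(\Gamma \cap \mathcal{O})=\sum_{z \in \Gamma}\omega(z)$ is Binomial$(\#\Gamma,r)$. Its mean is $r\,\#\Gamma \geq rm$.

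The remaining step is a lower-tail Chernoff bound. For $X \sim \text{Bin}(N,r)$ with mean $\lambda=rN$, the standard estimate is
\begin{align*}
	P(X<\lambda/2) \leq \exp\{ -\lambda/8 \},
\end{align*}
which comes from $P(X \leq (1-\eta)\lambda) \leq e^{-\eta^2\lambda/2}$ with $\eta=1/2$. Because $\#(\Gamma \cap \mathcal{O})<rm/2$ implies $X<\lambda/2$ when $\lambda \geq rm$, we get the bound $\exp\{ -\lambda/8 \} \leq \exp\{ -rm/8 \}$. Alternatively, we could restrict to a subset of $\Gamma$ of size $\lceil m \rceil$, which gives the same bound by monotonicity.

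Integrating this conditional bound over $S \in G$ and adding the two terms yields the claim. There is no real obstacle here. The only points that need care are the conditioning (Fubini over $\P_r \times P$, using that $\mathcal{R}_n^A$ is measurable with respect to $S$ alone) and quoting the Chernoff constant $1/8$ correctly.
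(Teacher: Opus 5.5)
Your proposal is correct and follows the paper's proof: you split on whether $\#(\mathcal{R}_n^A \cap B)$ reaches $\min\{\Cr{CKN}\phi_d(n)\,\#A,(1-\delta)\,\#B\}$, and bound the complementary event by Proposition~\ref{prop:CKN}. On the good event you condition on the walks, using that $\mathcal{R}_n^A$ is independent of $\omega$, and apply the lower-tail Chernoff bound $\exp\{-\lambda/8\}$ to the binomial count of occupied sites. That gives exactly the stated two-term bound with the same constant $\Cr{CKN}$.
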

\begin{proof}
Let $0<\delta<1$ and $2 \leq n \in \N$.
We now fix $r \in (0,1]$ and finite subsets $A,B$ of $\Z^d$ with $\max\{ \| x-y \|_2:x \in A,\,y \in B \} \leq \sqrt{n}$ and $\# B \geq \delta n^{d/2}$.
By the Chernoff bound for a sum of independent Bernoulli random variables,
\begin{align*}
	\P_r\Bigl( \#(\mathcal{R}_n^A \cap B \cap \mathcal{O})<\frac{r}{2}\#(\mathcal{R}_n^A \cap B) \Bigr)
	&= \P_r\biggl( \sum_{z \in \mathcal{R}_n^A \cap B} \omega(z)<\frac{r}{2} \#(\mathcal{R}_n^A \cap B) \biggr)\\
	&\leq \exp\Bigl\{ -\frac{r}{8} \#(\mathcal{R}_n^A \cap B) \Bigr\}.
\end{align*}
This, together with Proposition~\ref{prop:CKN} and the fact that $\mathcal{R}_n^A$ is independent of $\omega$, implies that
\begin{align*}
	&\bP_r\Bigl( \#(\mathcal{R}_n^A \cap B \cap \mathcal{O})<\frac{r}{2}\min\{ \Cr{CKN}\phi_d(n)\,\# A,(1-\delta)\,\# B \} \Bigr)\\
	&\leq \exp\{ -\Cr{CKN}\,\# A \}+E\biggl[ \1{\{ \#(\mathcal{R}_n^A \cap B) \geq \min\{ \Cr{CKN}\phi_d(n)\,\# A,(1-\delta)\,\# B \} \}} \exp\Bigl\{ -\frac{r}{8} \#(\mathcal{R}_n^A \cap B) \Bigr\} \biggr]\\
	&\leq \exp\{ -\Cr{CKN}\,\# A \}+\exp\Bigl\{ -\frac{r}{8}\min\{ \Cr{CKN}\phi_d(n)\,\# A,(1-\delta)\,\# B \} \Bigr\},
\end{align*}
which proves the lemma.
\end{proof}

The rest of this subsection consists of three parts.
The first two parts are for the initial step, and the third step is for the recursive argument.

\subsubsection{Sowing the seeds of active frogs}\label{subsect:sowing}
In order to construct a percolation structure on the mesoscopic scale, we specify the constant $\epsilon=\epsilon(d)$ appearing in the concept of $r$-good as follows:
\begin{align}\label{eq:eps}
	\epsilon:=\frac{1}{12d}.
\end{align}
Let us consider the boxes on the scale $r^{-(1/2+\epsilon)}$, which is much smaller than the scale $r^{-d/2}$ used in Section~\ref{subsect:pf_upper}.
In one of such boxes, we run the frogs up to time $r^{-(1+3\epsilon)} \gg (r^{-(1/2+\epsilon)})^2$ to activate some sleeping frogs in the neighboring boxes.
It might look strange that the time and distance ratio is much larger than the expected scaling of the time constant:
\begin{align*}
	\frac{r^{-(1+3\epsilon)}}{r^{-(1/2+\epsilon)}}=r^{-(1/2+2\epsilon)}\gg \delta_d(r).
\end{align*}
However, this does not cause a problem because the mesoscopic scale is used only to activate many sleeping frogs in the initial box and the extra time spent in the initial step will be wiped out in the recursion step.

Let us formally describe the mesoscopic percolation.
Define for each $v \in \Z^d$,
\begin{align}\label{eq:box_lamda}
\begin{split}
	&\Theta^\text{in}_r(v):=B_\infty\bigl( 7\lceil r^{-(1/2+\epsilon)} \rceil v,r^{-(1/2+\epsilon)} \bigr) \cap \Z^d,\\
	&\Theta_r(v):=B_\infty\bigl( 7\lceil r^{-(1/2+\epsilon)} \rceil v,2r^{-(1/2+\epsilon)} \bigr) \cap \Z^d,\\
	&\Theta^\text{out}_r(v):=\bigl(B_\infty\bigl( 7\lceil r^{-(1/2+\epsilon)} \rceil v,3r^{-(1/2+\epsilon)} \bigr) \cap \Z^d \bigr) \setminus \Theta_r(v).\\
\end{split}
\end{align}
The object of study here is the event $\mathcal{S}_r(v)$ that $\Theta_r^\text{in}(v)$ contains at least one occupied site and the following conditions hold for each occupied site $x \in \Theta_r^\text{in}(v)$ (see Figure~\ref{fig:Theta}):
\begin{itemize}
\item For any $u \overset{*}{\sim} v$, we can find an occupied site $a \in \Theta_r^\text{in}(u)$ with $T_{\Theta_r(v)}(x,a) \leq r^{-(1+3\epsilon)}$.

\item $T_{\Theta_r(v)}(x,b) \leq r^{-(1+3\epsilon)}$ holds for some occupied site $b \in \Theta_r^\text{out}(v)$.
\end{itemize}

The occurrence of $\mathcal{S}_r(v)$ is useful for sowing enough seeds of active frogs in and around the box $\Theta_r(v)$ within time $r^{-(1+3\epsilon)}$.
\begin{figure}
    \centering
    \includegraphics[width=0.75\linewidth]{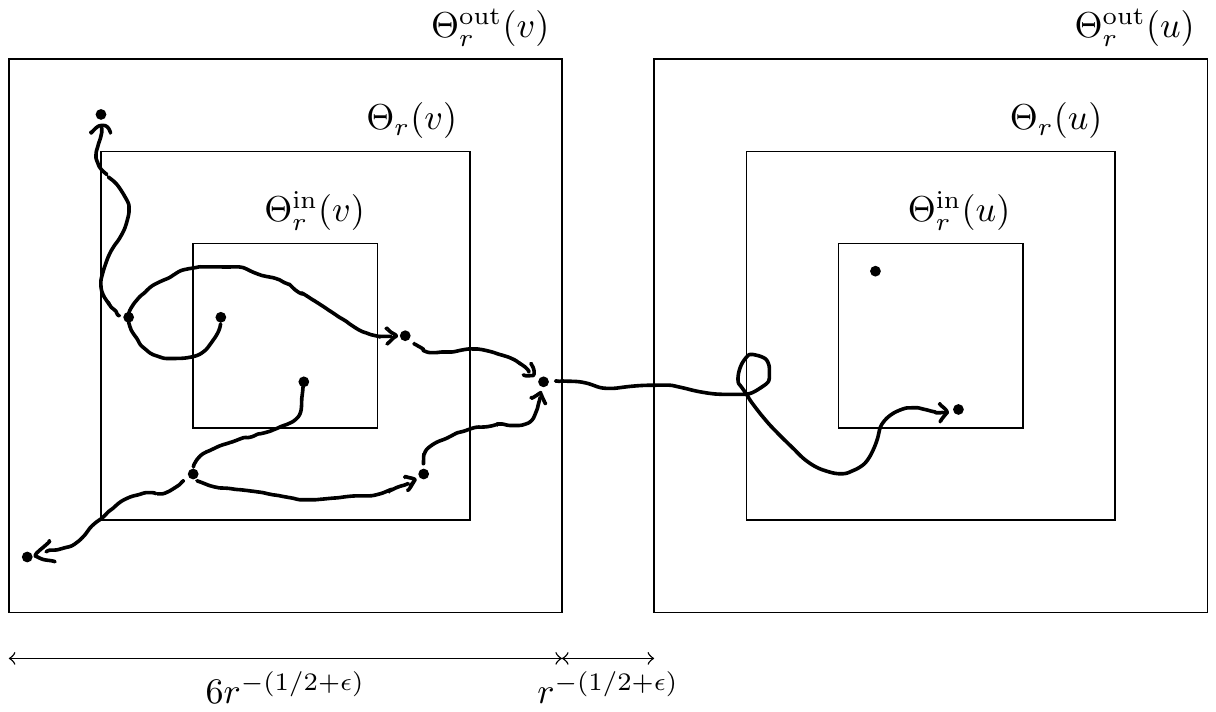}
    \caption{Schematic picture of the event $\mathcal{S}_r(v)$. Every frog in $\Theta_r^\text{in}(v)$ hits at least one frog in each of $\Theta_r^\text{out}(v)$ and $\Theta_r^\text{in}(u)$.}
    \label{fig:Theta}
\end{figure}
The following proposition says that each sowing event $\mathcal{S}_r(v)$ occurs with high probability when $r$ is small enough.

\begin{prop}\label{prop:sowing}
If $r$ is small enough (depending on $d$), then for any $v \in \Z^d$,
\begin{align*}
	\bP_r(\mathcal{S}_r(v)^c) \leq \exp\{ -r^{-\epsilon/6} \}.
\end{align*}
\end{prop}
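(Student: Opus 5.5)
By translation invariance it suffices to take $v=0$. Write $L:=r^{-(1/2+\epsilon)}$. The box $\Theta_r(0)$ contains about $(4L)^d$ sites, so it has of order $r\,L^d = r^{1-d/2-d\epsilon}$ occupied sites. The event $\mathcal{S}_r(0)^c$ is a union of three kinds of failure:
\begin{itemize}
\item $\Theta_r^{\text{in}}(0)$ has no occupied site;
\item for some occupied $x\in\Theta_r^{\text{in}}(0)$ and some $u\overset{*}{\sim}0$, there is no occupied $a\in\Theta_r^{\text{in}}(u)$ with $T_{\Theta_r(0)}(x,a)\le r^{-(1+3\epsilon)}$;
\item for some occupied $x\in\Theta_r^{\text{in}}(0)$, there is no occupied $b\in\Theta_r^{\text{out}}(0)$ with $T_{\Theta_r(0)}(x,b)\le r^{-(1+3\epsilon)}$.
\end{itemize}
The first has probability at most $(1-r)^{cL^d}\le \exp\{-c\,r^{1-d/2-d\epsilon}\}$, which is far smaller than needed since $d\ge2$.

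For the other two, I will take a union bound over the at most $cL^d$ possible $x$ and the $3^d$ targets. That polynomial factor $r^{-c}$ is harmless against a bound of the form $\exp\{-c\,r^{-\epsilon/3}\}$. So the task reduces to a single fixed $x$ and a single target box $B$, which is one of the $\Theta_r^{\text{in}}(u)$ or $\Theta_r^{\text{out}}(0)$. Note that $T_{\Theta_r(0)}$ only lets walks start from sites in $\Theta_r(0)$, but the final site is free. This is what makes hitting $\Theta_r^{\text{in}}(u)$ (which lies outside $\Theta_r(0)$) meaningful.

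The plan for a fixed $x$ is a bootstrap inside $\Theta_r(0)$ using Lemma~\ref{lem:CKN_cor}, in a few rounds.

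\textbf{Round 0.} Run the single frog from $x$ for time $n_0:=r^{-(1+\epsilon)}$, with $\sqrt{n_0}\ll L$. With $A=\{x\}$ and $B$ a ball of radius $\sqrt{n_0}/2$ around $x$ intersected with $\Theta_r(0)$, the range meets about $\phi_d(n_0)$ sites, hence about $r\phi_d(n_0)\approx r^{-\epsilon}$ occupied sites (up to a log in $d=2$). For a single walker I would not use the lemma here. Instead I would split the time into $r^{-\epsilon/2}$ blocks of length $r^{-(1+\epsilon/2)}$. Each block independently, with probability bounded below, visits $\gtrsim\phi_d(r^{-1-\epsilon/2})$ fresh sites, by the same range argument as in Lemma~\ref{lem:SingleChainTail} but with the inequality reversed. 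This gives at least $r^{-\epsilon/3}$ occupied activated sites with failure probability $\exp\{-c\,r^{-\epsilon/2}\}$.

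\textbf{Rounds $k\ge1$.} Let $A_k$ be the set of activated occupied sites, chosen to lie inside a region of diameter at most $\sqrt{n}$ from the next target region. Run their walks for another $n$ steps and apply Lemma~\ref{lem:CKN_cor}. The number of occupied hits grows like
\[
\# A_{k+1}\gtrsim r\,\phi_d(n)\,\#A_k = r^{-\epsilon}\,\#A_k
\]
(up to logs), until saturation at $(1-\delta)r\#B$. The failure probability in each round is $\exp\{-c\#A_k\}+\exp\{-c\,r\cdot\min\{\cdots\}\}\le \exp\{-c\,r^{-\epsilon/3}\}$.

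The main obstacle is geometric. Lemma~\ref{lem:CKN_cor} needs $\max\|x-y\|_2\le\sqrt n$ between $A$ and $B$. The targets sit at distance about $7L$, while a single round with $n\le r^{-(1+3\epsilon)}$ covers distance only about $r^{-(1/2+3\epsilon/2)}\gg L$. So a single round with $n=r^{-(1+2\epsilon)}$ and $A=A_0$ (of size at least $r^{-\epsilon/3}$) can already reach $B$, with $\sqrt n=r^{-(1/2+\epsilon)}\cdot$(constant fudge). I would choose $n=c\,r^{-(1+2\epsilon)}$ with $c$ chosen so that $\sqrt n\ge 10\sqrt d\,L$.

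The remaining points to check are these.
\begin{itemize}
\item $\#B\ge\delta n^{d/2}$, which holds since $\#B\asymp L^d\asymp n^{d/2}$.
\item The walks used as sources start in $\Theta_r(0)$. This holds because round 0 is confined to the ball of radius $\sqrt{n_0}\ll L$; I would intersect with the event that the walk stays there, which has negligible failure probability by the maximal inequality used in Lemma~\ref{lem:SingleChainTail}.
\item The total time is $n_0+n\le r^{-(1+3\epsilon)}$.
\end{itemize}
Then $\#(\mathcal{R}_n^{A_0}\cap B\cap\mathcal{O})\ge1$ except with probability $\exp\{-c\,r^{-\epsilon/3}\}+\exp\{-c\,r\min(r^{-(1+2\epsilon)}r^{-\epsilon/3}/\log,\,L^d)\}$. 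Each hit occupied site $b$ satisfies $T_{\Theta_r(0)}(x,b)\le n_0+n$.

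Combining the union bound with the factor $r^{-c}$ yields $\exp\{-r^{-\epsilon/6}\}$ for small $r$.
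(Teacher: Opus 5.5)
Your proposal is correct and is essentially the paper's proof. The frog from $x$ first activates $\gtrsim r^{-c\epsilon}$ occupied sites well inside $\Theta_r(0)$ using only about $r^{-(1+\epsilon)}$ steps (the paper's set $G_r(x)$). Lemma~\ref{lem:CKN_cor} is then applied conditionally on that set over a time of order $r^{-(1+2\epsilon)}$; this is legitimate because the set is determined by $S_\cdot^x$ and $\omega$ on $\Theta_r(0)\setminus\{x\}$, while every target box lies outside $\Theta_r(0)$. A polynomial union bound over $x$ and the $3^d$ targets finishes the argument.

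The only difference is Round~0, where the paper invokes the stretched-exponential range bound \cite[Lemma~3.1]{AlvMacPop02} instead of your blocks. If you keep the blocks, fix three small points:
one good block among the $r^{-\epsilon/2}$ already suffices, so you do not need ``fresh'' sites;
the constant-probability lower bound on a block's range comes from a second-moment argument such as Lemma~\ref{lem:range_lower}, not from reversing the Jensen step of Lemma~\ref{lem:SingleChainTail};
the confinement ball must have radius much larger than $\sqrt{n_0}$ (e.g.\ $L/2$) for its failure probability to be negligible.
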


Before proving Proposition~\ref{prop:sowing}, we prepare some notation and lemmata.
Let $n_r:=121d \lceil r^{-(1+2\epsilon)} \rceil$ and consider the events
\begin{align*}
	&\mathcal{S}_{r,1}:=\bigl\{ \#(\Theta_r^\text{in}(0) \cap \mathcal{O}) \geq 1 \bigr\},\\
	&\mathcal{S}_{r,2}:=\bigcap_{x \in \Theta_r^\text{in}(0)} \bigcap_{u \overset{*}{\sim} 0} \Bigl\{ \#\bigl( \mathcal{R}_{n_r}^{G_r(x)} \cap \Theta_r^\text{in}(u) \cap \mathcal{O} \bigr) \geq 1 \Bigr\},\\
	&\mathcal{S}_{r,3}:=\bigcap_{x \in \Theta_r^\text{in}(0)} \Bigl\{ \#\bigl( \mathcal{R}_{n_r}^{G_r(x)} \cap \Theta_r^\text{out}(0) \cap \mathcal{O} \bigr) \geq 1 \Bigr\},
\end{align*}
where $G_r(x):=(\mathcal{R}_{n_r}^x \cap \Theta_r(0) \cap \mathcal{O}) \setminus \{x\}$ denotes the set of all occupied sites, except for $x$, that are visited by the simple random walk $S_\cdot^x$ up to time $n_r$.
Let us first observe that the events $\mathcal{S}_{r,1}$, $\mathcal{S}_{r,2}$ and $\mathcal{S}_{r,3}$ cause the sowing event $\mathcal{S}_r(0)$ when $r$ is small enough.

\begin{lem}\label{lem:realize_sowing}
If $r$ is small enough (depending on $d$ and $\epsilon$), then
\begin{align*}
	\mathcal{S}_{r,1} \cap \mathcal{S}_{r,2} \cap \mathcal{S}_{r,3} \subset \mathcal{S}_r(0).
\end{align*}
\end{lem}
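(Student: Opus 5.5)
The argument is deterministic. We show that on $\mathcal{S}_{r,1}\cap\mathcal{S}_{r,2}\cap\mathcal{S}_{r,3}$ every requirement in the definition of $\mathcal{S}_r(0)$ is met by a two-step chain of frogs. The first condition of $\mathcal{S}_r(0)$, namely that $\Theta_r^\text{in}(0)$ contains an occupied site, is exactly $\mathcal{S}_{r,1}$. Now fix an occupied $x \in \Theta_r^\text{in}(0)$, and either a neighbor $u \overset{*}{\sim} 0$ or the target region $\Theta_r^\text{out}(0)$.

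By $\mathcal{S}_{r,2}$ (respectively $\mathcal{S}_{r,3}$) there are:
\begin{itemize}
\item a site $z \in G_r(x)$;
\item an occupied site $a \in \mathcal{R}_{n_r}^{z} \cap \Theta_r^\text{in}(u)$ (respectively $b \in \Theta_r^\text{out}(0)$).
\end{itemize}
By definition, $z$ is occupied, $z \in \Theta_r(0)$, $z \neq x$, and $S^x_\cdot$ hits $z$ by time $n_r$. Hence $\tau(x,z) \le n_r$ and $\tau(z,a) \le n_r$.

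The chain $x_0=x$, $x_1=z$, $x_2=a$ is admissible in \eqref{eq:restr_fpt} with $A=\Theta_r(0)$. This requires only $x_0,x_1\in\Theta_r(0)$, and the endpoint $x_2$ is unrestricted. Therefore
\begin{align*}
T_{\Theta_r(0)}(x,a) \le \tau(x,z)+\tau(z,a) \le 2n_r = 242d\lceil r^{-(1+2\epsilon)}\rceil.
\end{align*}
The same bound holds for $b$.

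It remains to check that $2n_r \le r^{-(1+3\epsilon)}$ for small $r$. Since $242d\lceil r^{-(1+2\epsilon)}\rceil \le 484d\, r^{-(1+2\epsilon)}$, this reduces to $484d \le r^{-\epsilon}$, which holds once $r$ is small enough depending on $d$ and $\epsilon$.

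The only point needing care, and the main potential obstacle, is that $z$ must lie in $\Theta_r(0)$ and not merely in the range of $S^x$. This is built into the definition of $G_r(x)$ through the intersection with $\Theta_r(0)$, so the restricted passage time is legitimate. One also uses that $z \neq x$, so that the frog at $z$ is a distinct frog, and that $\omega(z)=1$, so that $\tau(z,\cdot)$ is finite.

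The degenerate case $G_r(x)=\emptyset$ is not a problem. Then $\mathcal{R}_{n_r}^{G_r(x)}$ is empty, so $\mathcal{S}_{r,2}$ fails and there is nothing to prove.
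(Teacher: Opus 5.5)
Your proposal is correct and matches the paper's proof: for each occupied $x\in\Theta_r^\text{in}(0)$ you pick an intermediate occupied site $z\in G_r(x)\subset\Theta_r(0)$ and bound $T_{\Theta_r(0)}(x,a)\le\tau(x,z)+\tau(z,a)\le 2n_r\le r^{-(1+3\epsilon)}$ (and likewise for $b$), exactly as the paper does. You also spell out two points the paper leaves implicit: that the chain $x,z,a$ is admissible in the definition of $T_{\Theta_r(0)}$, and that $2n_r\le 484d\,r^{-(1+2\epsilon)}\le r^{-(1+3\epsilon)}$ for small $r$.
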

\begin{proof}
Note that the occurrence of $\mathcal{S}_{r,1}$ guarantees that $\Theta_r^\text{in}(0)$ contains at least one occupied site.
On $\mathcal{S}_{r,1} \cap \mathcal{S}_{r,2}$, for any occupied site $x \in \Theta_r^\text{in}(0)$ and $u \overset{*}{\sim} 0$, there exist occupied sites $z \in \Theta_r(0)$ and $a \in \Theta_r^\text{in}(u)$ such that $\tau(x,z) \vee \tau(z,a) \leq n_r$.
Hence, for all $r$ sufficiently small (depending on $d$),
\begin{align*}
	T_{\Theta_r(0)}(x,a) \leq \tau(x,z)+\tau(z,a) \leq 2n_r \leq r^{-(1+3\epsilon)}.
\end{align*}
Moreover, on $\mathcal{S}_{r,1} \cap \mathcal{S}_{r,3}$, for any occupied site $x \in \Theta_r^\text{in}(0)$, there exist occupied sites $z \in \Theta_r(0)$ and $b \in \Theta_r^\text{out}(0)$ such that $\tau(x,z) \vee \tau(z,b) \leq n_r$.
This implies that for all $r$ sufficiently small (depending on $d$),
\begin{align*}
	T_{\Theta_r(0)}(x,b) \leq \tau(x,z)+\tau(z,b) \leq 2n_r \leq r^{-(1+3\epsilon)}.
\end{align*}
Consequently, all the conditions in the definition of $\mathcal{S}_r(0)$ are satisfied, and therefore $\mathcal{S}_{r,1} \cap \mathcal{S}_{r,2} \cap \mathcal{S}_{r,3} \subset \mathcal{S}_r(0)$ holds if $r$ is small enough (depending on $d$).
\end{proof}

Our next task is to derive an upper bound for the cardinality of $G_r(x)$, which is an important factor of $\mathcal{S}_{r,2}$ and $\mathcal{S}_{r,3}$.

\begin{lem}\label{lem:occupied}
If $r$ is small enough (depending on $d$), then for any $x \in \Theta_r^\text{in}(0)$,
\begin{align*}
	\bP_r\bigl( \# G_r(x)<r^{-\epsilon/2} \bigr) \leq 3\exp\{ -r^{-\epsilon/5} \}.
\end{align*}
\end{lem}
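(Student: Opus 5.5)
The plan is to find a short time window $m \le n_r$ that has three properties. During this window the walk $S^x_\cdot$ stays inside $\Theta_r(0)$ with overwhelming probability. Its range is then, with overwhelming probability, of size at least $r^{-1}$ times a positive power of $r^{-1}$. Given that range, a Chernoff bound produces at least $r^{-\epsilon/2}+1$ occupied sites. Proposition~\ref{prop:CKN} and Lemma~\ref{lem:CKN_cor} do not help directly here: with $A=\{x\}$ their error term $\exp\{-\Cr{CKN}\#A\}$ is only a constant. So the range lower bound for a single walk has to be boosted by hand, and I will do this with a block argument.

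Concretely, set $m:=\lfloor r^{-(1+3\epsilon/2)}\rfloor \le n_r$. Since $x\in\Theta^\text{in}_r(0)$, the $\ell^\infty$-distance from $x$ to $\Z^d\setminus\Theta_r(0)$ is at least $r^{-(1/2+\epsilon)}$. The maximal inequality from \cite[Proposition~2.1.2-(b)]{LawLim10_book}, already used in the proof of Lemma~\ref{lem:SingleChainTail}, then gives
\begin{align*}
P\Bigl(\max_{0\le k\le m}\|S^x_k-x\|_1\ge r^{-(1/2+\epsilon)}\Bigr)\le c^{-1}\exp\{-c\,r^{-(1+2\epsilon)}/m\}\le c^{-1}\exp\{-c\,r^{-\epsilon/2}\}.
\end{align*}
On the complementary event we have $\mathcal{R}^x_m\subset\Theta_r(0)$, and therefore $\#G_r(x)\ge\#(\mathcal{R}^x_m\cap\mathcal{O})-1$.

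Next I lower bound $\#\mathcal{R}^x_m$. Put $K:=\lceil r^{-\epsilon/4}\rceil$ and $L:=\lfloor m/K\rfloor\approx r^{-(1+5\epsilon/4)}$, and cut $[0,m]$ into $K$ consecutive blocks of length $L$. The total range dominates the range of each individual block, so
\begin{align*}
\{\#\mathcal{R}^x_m<c\,\phi_d(L)\}\subset\bigcap_{i=1}^K\{\text{range of block }i<c\,\phi_d(L)\}.
\end{align*}
By the Markov property the block ranges are i.i.d.\ copies of $\#\mathcal{R}^0_L$. Hence it suffices to show that $P(\#\mathcal{R}^0_L\ge c\,\phi_d(L))\ge p$ for some constant $p=p(d)>0$, which yields the bound $(1-p)^K\le\exp\{-p\,r^{-\epsilon/4}\}$. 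This constant-probability lower bound is the step I expect to be the main obstacle, since the paper has so far quoted only the upper bound on $E[\#\mathcal{R}^0_n]$. I would first try the Paley--Zygmund inequality. It needs two inputs: $E[\#\mathcal{R}^0_L]\ge c\,\phi_d(L)$, which is the matching lower bound in \cite{DvoErd51}, and $E[(\#\mathcal{R}^0_L)^2]\le C(E[\#\mathcal{R}^0_L])^2$, which follows from the known variance asymptotics of the range (Jain--Pruitt). If that citation is unwanted, the weak law of large numbers for $\#\mathcal{R}^0_L/E[\#\mathcal{R}^0_L]$ gives the same conclusion.

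Finally, $\omega$ is independent of $S$. On $\{\#\mathcal{R}^x_m\ge c\,\phi_d(L)\}$ we have $r\,c\,\phi_d(L)\ge c'r^{-5\epsilon/4}/|\log r|\ge 4(r^{-\epsilon/2}+1)$ for small $r$, and this holds in both cases $d=2$ and $d\ge3$. The Chernoff bound used in the proof of Lemma~\ref{lem:CKN_cor} then gives
\begin{align*}
\P_r\bigl(\#(\mathcal{R}^x_m\cap\mathcal{O})<r^{-\epsilon/2}+1\bigr)\le\exp\{-c\,r^{-\epsilon}\}.
\end{align*}
Summing the three error terms gives at most $c^{-1}e^{-cr^{-\epsilon/2}}+e^{-pr^{-\epsilon/4}}+e^{-cr^{-\epsilon}}$. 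For $r$ small enough each of these terms is at most $\exp\{-r^{-\epsilon/5}\}$, which proves the lemma.
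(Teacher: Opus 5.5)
Your proof is correct. It has the same skeleton as the paper's proof: show that the walk from $x$ visits many sites inside $\Theta_r(0)$ before time $n_r$, with stretched-exponential error, then apply the Chernoff bound and subtract one for $x$ itself. The difference is how you obtain the range estimate.

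\textbf{The paper's route.} It quotes the range bound \eqref{eq:AMP_range} from \cite[Lemma~3.1]{AlvMacPop02} with $m_r=\lceil r^{-(1+\epsilon)}\rceil$ and $\beta=\epsilon/\{5(1+\epsilon)\}$. Since $B_2(x,m_r^{1/2+\beta})\subset\Theta_r(0)$, confinement is built into that estimate, and $m_r^\beta\ge r^{-\epsilon/5}$ gives the exponent directly.

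\textbf{Your route.} You rebuild such an estimate from scratch. Confinement comes from the maximal inequality of \cite{LawLim10_book}. The stretched-exponential lower tail for the range comes from splitting $[0,m]$ into $K\approx r^{-\epsilon/4}$ independent blocks, each of which has range at least $c\,\phi_d(L)$ with probability bounded below.

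\textbf{The step you flagged as the main obstacle.} It is already in the paper. Lemma~\ref{lem:range_lower}, applied with $\Gamma=B_2(0,\sqrt{L})\cap\Z^d$, gives exactly $P(\#\mathcal{R}^0_L\ge c\,\phi_d(L))\ge c$. Its proof uses the Paley--Zygmund inequality together with the elementary second-moment bound from the strong Markov property. So you need neither the Jain--Pruitt variance asymptotics nor a law of large numbers for the range.

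\textbf{Comparison.} Your route is self-contained. The paper relies on a version of \eqref{eq:AMP_range} that is not literally the cited lemma: see the remark after its proof, concerning $2\beta$ versus $\beta$ and the case $d=2$. You use only tools already present in the paper and treat $d=2$ and $d\ge 3$ uniformly. The price is a few more lines than the paper's direct citation.
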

\begin{proof}
We first recall the result obtained in \cite[Lemma~{3.1}]{AlvMacPop02}:
for any $\beta \in (0,1/2]$, if $n \in \N$ is large enough (depending on $d$ and $\beta$), then
\begin{align}\label{eq:AMP_range}
	P\bigl( \#(\mathcal{R}_n^0 \cap B_2(0,n^{1/2+\beta}))<n^{1-2\beta} \bigr) \leq 2\exp\{ -n^\beta \}.
\end{align}
From now on, take $\beta:=\epsilon/\{ 5(1+\epsilon) \}$ and let $r$ be small enough to justify the argument below.
Fix $x \in \Theta_r^\text{in}(0)$ and write $m_r:=\lceil r^{-(1+\epsilon)} \rceil$ for simplicity of notation.
Then, we have $(\mathcal{R}_{m_r}^x \cap B_2(x,m_r^{1/2+\beta}) \cap \mathcal{O}) \setminus \{x\} \subset G_r(x)$, and hence
\begin{align*}
	\#\bigl( \mathcal{R}_{m_r}^x \cap B_2(x,m_r^{1/2+\beta}) \cap \mathcal{O} \bigr)-1 \leq \# G_r(x).
\end{align*}
This, combined with the translation invariance of $(\omega,S)$ and the fact that $r^{-\epsilon/2}+1 \leq rm_r^{1-2\beta}/2$, shows that
\begin{align*}
	\bP_r\bigl( \# G_r(x)<r^{-\epsilon/2} \bigr)
	\leq \bP_r\biggl( \#\bigl( \mathcal{R}_{m_r}^0 \cap B_2(0,m_r^{1/2+\beta}) \cap \mathcal{O} \bigr)<\frac{r}{2}m_r^{1-2\beta} \biggr).
\end{align*}
Hence, our task is to prove that the right side is bounded from above by $\exp\{ -r^{-\epsilon/5} \}$.
Thanks to \eqref{eq:AMP_range},
\begin{align*}
	P\bigl( \#(\mathcal{R}_{m_r}^0 \cap B_2(0,m_r^{1/2+\beta}))<m_r^{1-2\beta} \bigr)
	\leq 2\exp\{ -m_r^\beta \}.
\end{align*}
Moreover, the Chernoff bound for a sum of independent Bernoulli random variables tells us that on the event that $\#(\mathcal{R}_{m_r}^0 \cap B_2(0,m_r^{1/2+\beta})) \geq m_r^{1-2\beta}$,
\begin{align*}
	\P_r\biggl( \#\bigl( \mathcal{R}_{m_r}^0 \cap B_2(0,m_r^{1/2+\beta}) \cap \mathcal{O} \bigr)<\frac{r}{2}m_r^{1-2\beta} \biggr)
	\leq \exp\Bigl\{ -\frac{r}{8}m_r^{1-2\beta} \Bigr\}.
\end{align*}
With these observations, noting that $m_r^\beta \geq r^{-\epsilon/5}$ and $rm_r^{1-2\beta}/8 \geq r^{-\epsilon/5}$, one has
\begin{align*}
	&\bP_r\biggl( \#\bigl( \mathcal{R}_{m_r}^0 \cap B_2(0,m_r^{1/2+\beta}) \cap \mathcal{O} \bigr)<\frac{r}{2}m_r^{1-2\beta} \biggr)\\
	&\leq 2\exp\{ -m_r^\beta \}+\exp\Bigl\{ -\frac{r}{8}m_r^{1-2\beta} \Bigr\}
	\leq 3\exp\{ -r^{-\epsilon/5} \},
\end{align*}
and the lemma follows.
\end{proof}

\begin{rem}
Actually, \cite[Lemma~{3.1}]{AlvMacPop02} proves \eqref{eq:AMP_range} with $2\beta$ replaced by $\beta$ in $d \geq 3$.
However, the argument used there also works for all $d \geq 2$ by modifying the statement of \cite[Lemma~{3.1}]{AlvMacPop02} as in \eqref{eq:AMP_range}.
\end{rem}

We are now in a position to prove Proposition~\ref{prop:sowing}.

\begin{proof}[\bf Proof of Proposition~\ref{prop:sowing}]
By the translation invariance of $(\omega,S)$, it suffices to prove the proposition in the case where $v=0$.
Thanks to Lemma~\ref{lem:realize_sowing}, our task is to prove that if $r$ is small enough (depending on $d$), then
\begin{align}\label{eq:good}
	\bP_r(\mathcal{S}_{r,i}^c) \leq \frac{1}{3}\exp\{ -r^{-\epsilon/6} \},\qquad i=1,2,3.
\end{align}

Assume that $r$ is small enough to justify the argument below.
The bound for $\bP_r(\mathcal{S}_{r,1}^c)$ immediately follows from the independence of $\omega$ and the fact that $(1-t)^{1/t} \leq e^{-1}$ holds for all $t \in (0,1]$ (note that this fact is often used below without further comment):
\begin{align*}
	\bP_r(\mathcal{S}_{r,1}^c)
	\leq (1-r)^{r^{-d(1/2+\epsilon)}}
	\leq \exp\{ -r^{1-d(1/2+\epsilon)} \}
	\leq \frac{1}{3}\exp\{ -r^{-\epsilon/6} \}.
\end{align*}

We next estimate $\bP_r(\mathcal{S}_{r,2}^c)$.
The union bound and Lemma~\ref{lem:occupied} imply that
\begin{align}\label{eq:S2}
\begin{split}
	\bP_r(\mathcal{S}_{r,2}^c)
	&\leq 3^{2d+1}r^{-d(1/2+\epsilon)}\exp\{ -r^{-\epsilon/6} \}\\
	&\quad +\sum_{x \in \Theta_r^\text{in}(0)}\sum_{u \overset{*}{\sim} 0} \bP_r\Bigl( \# G_r(x) \geq r^{-\epsilon/2},\, \#\bigl( \mathcal{R}_{n_r}^{G_r(x)} \cap \Theta_r^\text{in}(u) \cap \mathcal{O} \bigr)=0 \Bigr).
\end{split}
\end{align}
Since $G_r(x) \subset \Theta_r(0) \setminus \{x\}$ (in particular, $G_r(x)$ depends only on $S_\cdot^x$ and $\omega(z)$, $z \in \Theta_r(0) \setminus \{x\}$), each summand of the last term in \eqref{eq:S2} can be rewritten as follows:
\begin{align}\label{eq:S2_rewritten}
\begin{split}
	&\bP_r\Bigl( \# G_r(x) \geq r^{-\epsilon/2},\, \#\bigl( \mathcal{R}_{n_r}^{G_r(x)} \cap \Theta_r^\text{in}(u) \cap \mathcal{O} \bigr)=0 \Bigr)\\
	&= \sum_{\substack{A \subset \Theta_r(0) \setminus \{x\}\\ \# A \geq r^{-\epsilon/2}}} \bP_r(G_r(x)=A) \,\bP_r\Bigl( \#\bigl( \mathcal{R}_{n_r}^A \cap \Theta_r^\text{in}(u) \cap \mathcal{O} \bigr)=0 \Bigr).
\end{split}
\end{align}
Note that for any $A \subset \Theta_r(0)$,
\begin{align*}
	\max\bigl\{ \|y-y'\|_2:y \in A,\,y' \in \Theta_r^\text{in}(u) \bigr\} \leq 11\sqrt{d}r^{-(1/2+\epsilon)} \leq \sqrt{n_r}
\end{align*}
and $\#\Theta_r^\text{in}(u) \geq \delta n_r^{d/2}$ for some $\delta \in (0,1)$ (which depends only on $d$).
Hence, Lemma~\ref{lem:CKN_cor} (with the $\delta$ above) yields that for all $A \subset \Theta_r(0) \setminus \{x\}$ with $\# A \geq r^{-\epsilon}$,
\begin{align*}
	&\bP_r\Bigl( \#\bigl( \mathcal{R}_{n_r}^A \cap \Theta_r^\text{in}(u) \cap \mathcal{O} \bigr)=0 \Bigr)\\
	&\leq \bP_r\biggl( \#\bigl( \mathcal{R}_{n_r}^A \cap \Theta_r^\text{in}(u) \cap \mathcal{O} \bigr)<\frac{r}{2}\min\bigl\{ \Cr{CKN}\phi_d(n_r)\,\# A,(1-\delta)\,\#\Theta_r^\text{in}(u) \bigr\} \biggr)\\
	&\leq \exp\{ -\Cr{CKN}r^{-\epsilon/2} \}+\exp\Bigl\{ -\frac{r}{8}\min\bigl\{ \Cr{CKN}\phi_d(n_r)r^{-\epsilon/2},(1-\delta)\delta n_r^{d/2} \bigr\} \Bigr\}.
\end{align*}
The rightmost side is smaller than or equal to $2\exp\{ -\Cr{CKN}r^{-\epsilon} \}$, and it follows by \eqref{eq:S2_rewritten} that
\begin{align*}
	\bP_r\Bigl( \# G_r(x) \geq r^{-\epsilon},\, \#\bigl( \mathcal{R}_{n_r}^{G_r(x)} \cap \Theta_r^\text{in}(u) \cap \mathcal{O} \bigr)=0 \Bigr)
	\leq 2\exp\{ -\Cr{CKN}r^{-\epsilon/2} \}.
\end{align*}
Substituting this into \eqref{eq:S2} proves
\begin{align*}
	\bP_r(\mathcal{S}_{r,2}^c)
	\leq 3^{2d+1}r^{-d(1/2+\epsilon)}\bigl( \exp\{ -r^{-\epsilon/5} \}+2\exp\{ -\Cr{CKN}r^{-\epsilon/2} \} \bigr)
	\leq \frac{1}{3} \exp\{ -r^{-\epsilon/6} \},
\end{align*}
which is the desired bound for $\bP_r(\mathcal{S}_{r,2}^c)$.

Let us finally estimate $\bP_r(\mathcal{S}_{r,3}^c)$.
Note that $\Theta_r(0)$ and $\Theta_r^\text{out}(0)$ are disjoint.
Furthermore, it is easy to see that if $A \subset \Theta_r(0) \setminus \{x\}$ with $\# A \geq r^{-\epsilon/2}$, then
\begin{align*}
	\max\bigl\{ \|y-y'\|_2:y \in A,\,y' \in \Theta_r^\text{out}(0) \bigr\} \leq 5\sqrt{d}r^{-(1/2+\epsilon)} \leq \sqrt{n_r}
\end{align*}
and $\#\Theta_r^\text{out}(0) \geq \delta'n_r^{d/2}$ for some $\delta' \in (0,1)$ (which depends only on $d$).
Hence, we can apply the same argument used to estimate $\bP_r(\mathcal{S}_{r,2}^c)$ and obtain for some $c \in (0,1)$ (which depends only on $d$),
\begin{align*}
	\bP_r(\mathcal{S}_{r,3}^c)
	\leq 4^dr^{-d(1/2+\epsilon)}\bigl( \exp\{ -r^{-\epsilon/5} \}+2\exp\{ -cr^{-\epsilon/2} \} \bigr)
	\leq \frac{1}{3} \exp\{ -r^{-\epsilon/6} \}.
\end{align*}
Therefore, \eqref{eq:good} is proved and the proposition follows.
\end{proof}

\subsubsection{Activating many sleeping frogs in the initial box}\label{subsect:activating}
In order to activate many sleeping frogs in the initial box, we introduce the event $\mathcal{A}_r$ that $\Theta_r^\text{in}(0)$ contains at least one occupied site, and for every occupied site $x \in \Theta_r^\text{in}(0)$,
\begin{align*}
	\max_{y \in \Lambda_r} T_{\Lambda_r}(x,y) \leq 5dr^{-(d+1)/2},
\end{align*}
where $\Lambda_r:=B_\infty\bigl( 0,r^{-(d+1)/4} \bigr) \cap \Z^d$.
Roughly speaking, the occurrence of $\mathcal{A}_r$ means that when an active frog starts moving from $\Theta_r^\text{in}(0)$, all sleeping frogs in $\Lambda_r$ are activated within time $5dr^{-(d+1)/2}$ (see Figure~\ref{fig:Lambda}).
Our main task here is to prove the following proposition, which guarantees that the activating event $\mathcal{A}_r$ occurs with high probability when $r$ is small enough.
\begin{figure}
\centering
\includegraphics[width=0.5\linewidth]{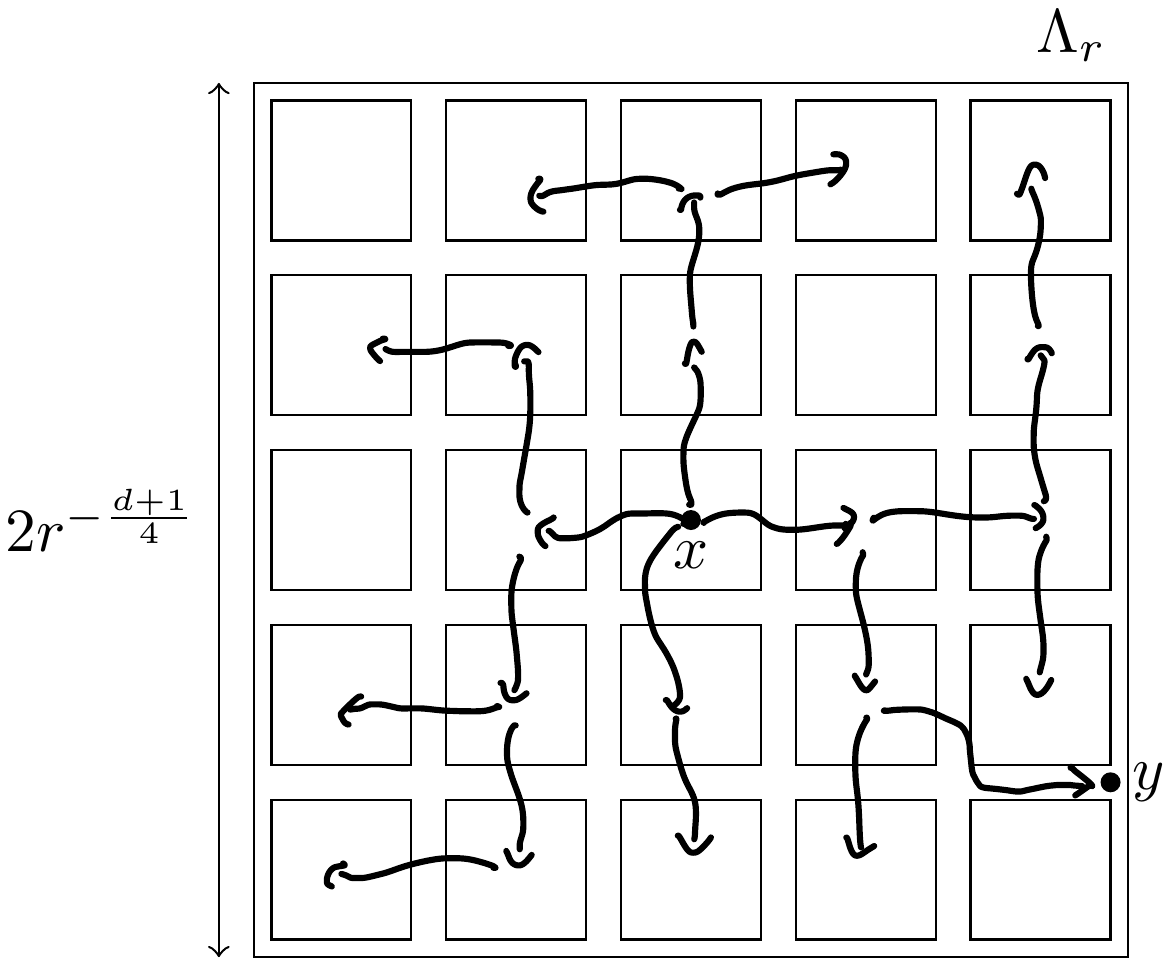}
\caption{Schematic picture of $\mathcal{A}_r$. Smaller boxes are $\Theta_r^\text{out}(v)$ and the frogs reach many of them in a relatively short time. Then from one of those boxes, we can find a frog that reaches $y$.}
\label{fig:Lambda}
\end{figure}

\begin{prop}\label{prop:activating}
$\lim_{r \searrow 0} \bP_r(\mathcal{A}_r)=1$ holds.
\end{prop}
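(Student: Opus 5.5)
The idea is to combine Proposition~\ref{prop:sowing}, applied to every mesoscopic box inside $\Lambda_r$, with a final step in which many independent frogs, one sown near each box, try to hit a given target $y \in \Lambda_r$. Let $V_r$ be the set of $v \in \Z^d$ with $B_\infty(7\lceil r^{-(1/2+\epsilon)}\rceil v,3r^{-(1/2+\epsilon)}) \subset \Lambda_r$. Then $\# V_r \asymp N_r:=(r^{-(d+1)/4}r^{1/2+\epsilon})^d$, which is polynomial in $r^{-1}$. By Proposition~\ref{prop:sowing} and a union bound,
\begin{align*}
	\bP_r\Bigl( \bigcup_{v \in V_r}\mathcal{S}_r(v)^c \Bigr) \leq \# V_r \exp\{ -r^{-\epsilon/6} \} \longrightarrow 0.
\end{align*}
The event $\mathcal{S}_{r,1}$ gives that $\Theta_r^\text{in}(0)$ contains an occupied site, with probability tending to one.

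\textbf{Step 1 (travelling between boxes).} Work on $\bigcap_{v \in V_r}\mathcal{S}_r(v)$. Fix an occupied $x \in \Theta_r^\text{in}(0)$ and a target box $v \in V_r$. Follow a $\overset{*}{\sim}$-path in $V_r$ from $0$ to $v$ of length at most $c\,r^{-(d+1)/4}r^{1/2+\epsilon}$. The first condition of $\mathcal{S}_r$, applied along the path, yields occupied sites in successive boxes $\Theta_r^\text{in}$, each reached from the previous one within time $r^{-(1+3\epsilon)}$, using only frogs in $\Theta_r(\cdot) \subset \Lambda_r$. The second condition then gives an occupied $b_v \in \Theta_r^\text{out}(v)$ with $T_{\Lambda_r}(x,b_v)$ of order at most
\begin{align*}
	r^{-(d+1)/4+1/2+\epsilon}\cdot r^{-(1+3\epsilon)}=r^{-(d+1)/4-1/2-2\epsilon}.
\end{align*}
Since $\epsilon=1/(12d)<(d-1)/8$, this is $o(r^{-(d+1)/2})$, so it is smaller than $d\,r^{-(d+1)/2}$ for small $r$. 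The site $b_v$ should be chosen by a deterministic rule from the sowing configuration.

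\textbf{Step 2 (independence of the sown frogs).} The rings $\Theta_r^\text{out}(v)$ are pairwise disjoint and also disjoint from every $\Theta_r(w)$, because the spacing is $7$ while the radii are $2$ and $3$ (in units of $r^{-(1/2+\epsilon)}$). The event $\bigcap_v\mathcal{S}_r(v)$ and the choice of the $b_v$'s depend only on $\omega$ together with the walks $S^z$ for $z \in \bigcup_w\Theta_r(w)$. Hence, conditionally on this information, the walks $(S^{b_v})_{v \in V_r}$ are independent simple random walks started at distinct points. I expect this conditioning/independence bookkeeping to be the main delicate point; the key is that $\omega(b)$ for $b$ in the rings may be revealed, but the walks $S^b$ never are.

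\textbf{Step 3 (hitting every $y$).} Fix $y \in \Lambda_r$ and set $n:=\lceil 4d\,r^{-(d+1)/2}\rceil$. Every $b_v$ satisfies $\|b_v-y\|_2 \leq L:=2\sqrt{d}\,r^{-(d+1)/4} \leq \sqrt{n}$. A standard Green's function/local CLT estimate gives
\begin{align*}
	P\bigl(S^{b}\text{ hits }y\text{ by time }n\bigr) \geq
	\begin{dcases*}
		c/\log L, & if $d=2$,\\
		c L^{2-d}, & if $d \geq 3$.
	\end{dcases*}
\end{align*}
By the independence in Step~2, the conditional probability that no $S^{b_v}$ hits $y$ by time $n$ is at most $\exp\{-c N_r L^{2-d}\}$ for $d \geq 3$, and at most $\exp\{-cN_r/\log L\}$ for $d=2$. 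In both cases the exponent is at least a positive power of $r^{-1}$: for $d \geq 3$ it is of order $r^{-1/2+d\epsilon}=r^{-5/12}$, and for $d=2$ it is of order $r^{-1/2+2\epsilon}/|\log r|$. A union bound over the polynomially many $y \in \Lambda_r$ still tends to zero. On the resulting event, for each $y$ there is some $v$ such that
\begin{align*}
	T_{\Lambda_r}(x,y) \leq T_{\Lambda_r}(x,b_v)+\tau(b_v,y) \leq d\,r^{-(d+1)/2}+n \leq 5d\,r^{-(d+1)/2}.
\end{align*}
This is admissible because $b_v \in \Lambda_r$ and only the last point of the chain may lie outside $\Lambda_r$. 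The final hitting event does not depend on $x$, and Step~1 works for every occupied $x$ simultaneously, so $\mathcal{A}_r$ holds with probability tending to one.
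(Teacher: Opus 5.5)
Your argument is essentially the paper's. You intersect the sowing events over all mesoscopic boxes inside $\Lambda_r$, chain through neighbouring boxes to reach an occupied site in every ring $\Theta_r^{\mathrm{out}}(v)$ in time $o(r^{-(d+1)/2})$, use that the walks started in the rings are independent of everything revealed, and finish with the hitting estimate and a union bound over $y$. The paper works with the whole set $W_r(x)$ of reachable ring sites rather than one $b_v$ per ring, and with a slightly smaller index set $V_r$. Neither difference matters; your larger $V_r$ even gives the slightly better exponent $r^{-1/2+d\epsilon}$. Your exponent bookkeeping and the disjointness and independence claims in Step 2 are correct.

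There is one false step: the claim that the final hitting event does not depend on $x$. Your $b_v$ is obtained from the endpoint $a_\ell\in\Theta_r^{\mathrm{in}}(v)$ of the chain started at $x$. The sowing event only guarantees that from \emph{each} occupied $a$ there is \emph{some} good $b$, so different starting points generally produce different sites $b_v(x)$. You also cannot make the event uniform over all possible selections of one occupied site per ring, since for a fixed $y$ most ring walks miss $y$. The fix is a union bound over $x\in\Theta_r^{\mathrm{in}}(0)$:
\begin{itemize}
\item For each fixed $x$, the sites $b_v(x)$ are measurable with respect to $\omega$ and $(S^z)_{z\in\bigcup_w\Theta_r(w)}$, so your Steps 2 and 3 apply verbatim.
\item This gives a failure probability of at most $\#\Lambda_r\exp\{-r^{-1/3}\}$ for small $r$.
\item Multiplying by $\#\Theta_r^{\mathrm{in}}(0)\le(2r^{-(1/2+\epsilon)}+1)^d$, which is only polynomial in $r^{-1}$, still gives a bound tending to zero.
\end{itemize}
This is exactly how the paper concludes: it bounds $\bP_r(\mathcal{A}_{r,1}\cap\mathcal{A}_{r,2}^c)$ by a sum over $x\in\Theta_r^{\mathrm{in}}(0)$, conditioning on $W_r(x)=A$ for each $x$.
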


To prove Proposition~\ref{prop:activating}, we begin by preparing some notation and lemmata.
Set $V_r:=B_\infty(0,r^{-(d-1)/4+2\epsilon}) \cap \Z^d$ and define for any $x \in \Theta_r^\text{in}(0)$,
\begin{align*}
	W_r(x):=\Biggl\{ w \in \bigcup_{v \in V_r} \Theta_r^\text{out}(v): T_{\bigcup_{v \in V_r} \Theta_r(v)}(x,w) \leq 2r^{-(d+1)/2} \Biggr\} \cap \mathcal{O}.
\end{align*}
This represents the seeds of active frogs that originate from the active frogs starting at $x$, and it is important in the proof of Proposition~\ref{prop:activating} that $W_r(x)$ depends only on $\omega$ and $S_\cdot^z$, $z \in \bigcup_{v \in V_r} \Theta_r(v)$.
We now consider the events
\begin{align*}
	&\mathcal{A}_{r,1}:=\bigcap_{v \in V_r} \mathcal{S}_r(v),\\
	&\mathcal{A}_{r,2}:=\bigcap_{x \in \Theta_r^\text{in}(0)} \Bigl\{ \max_{y \in \Lambda_r} \min_{w \in W_r(x)} H(w,y) \leq 4dr^{-(d+1)/2} \Bigr\},
\end{align*}
where $H(w,y):=\inf\{ k \geq 0:S_k^w=y \}$ stands for the hitting time of $y$ for the simple random walk $S_\cdot^w$.

As stated in Lemma~\ref{lem:W} below, the event $\mathcal{A}_{r,1}$ is useful to generate a lot of active frogs in $\Lambda_r$.
On the other hand, the event $\mathcal{A}_{r,2}$ ensures that the active frogs generated in $\Lambda_r$ can reach any point in $\Lambda_r$.

\begin{lem}\label{lem:W}
Let $r \in (0,1]$ be small enough to have $\bigcup_{v \in V_r} \Theta_r^\mathrm{out}(v) \subset \Lambda_r$.
Then, on the event $\mathcal{A}_{r,1}$, $\# W_r(x) \geq r^{-d\{ (d-1)/4-2\epsilon \}}$ holds for any occupied site $x \in \Theta_r^\mathrm{in}(0)$.
\end{lem}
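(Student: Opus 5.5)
The idea is to propagate the seeds of active frogs from box to box inside $V_r$, using the sowing events $\mathcal{S}_r(v)$, $v \in V_r$, as the building blocks of a deterministic spreading argument. Fix an occupied site $x \in \Theta_r^\text{in}(0)$. On $\mathcal{A}_{r,1}$, every box $\Theta_r^\text{in}(v)$, $v \in V_r$, contains an occupied site, since $\mathcal{S}_r(v)$ requires it. By induction on $k=\|v\|_\infty$ we show that for every $v \in V_r$ there is an occupied site $a_v \in \Theta_r^\text{in}(v)$ with
\begin{align*}
	T_{\bigcup_{v' \in V_r}\Theta_r(v')}(x,a_v) \leq \|v\|_\infty\, r^{-(1+3\epsilon)}.
\end{align*}
For $v=0$ we take $a_0=x$. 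For the induction step, pick a neighbour $u \overset{*}{\sim} v$ with $\|u\|_\infty=\|v\|_\infty-1$; it lies in $V_r$ because $V_r$ is an $\ell^\infty$-box. Apply the first condition of $\mathcal{S}_r(u)$ to the occupied site $a_u \in \Theta_r^\text{in}(u)$: it gives an occupied $a_v \in \Theta_r^\text{in}(v)$ with $T_{\Theta_r(u)}(a_u,a_v) \leq r^{-(1+3\epsilon)}$. The triangle inequality for the restricted passage time then closes the induction. Here it matters that restricted passage times over a smaller set dominate those over a larger set, and that concatenating paths whose intermediate points lie in $\bigcup_{v'}\Theta_r(v')$ stays admissible.

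Next we use the second condition of $\mathcal{S}_r(v)$ applied to $a_v$. It yields an occupied $b_v \in \Theta_r^\text{out}(v)$ with $T_{\Theta_r(v)}(a_v,b_v) \leq r^{-(1+3\epsilon)}$. Hence
\begin{align*}
	T_{\bigcup_{v'}\Theta_r(v')}(x,b_v) \leq \bigl(\|v\|_\infty+1\bigr) r^{-(1+3\epsilon)} \leq \bigl(r^{-(d-1)/4+2\epsilon}+1\bigr) r^{-(1+3\epsilon)}.
\end{align*}
The exponent on the right is $-(d+3)/4-\epsilon$, which is at most $2r^{-(d+1)/2}$ for small $r$, since $(d+3)/4+\epsilon<(d+1)/2$ when $d\geq 2$ and $\epsilon=1/(12d)$. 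So each $b_v$ belongs to $W_r(x)$, because $b_v \in \Theta_r^\text{out}(v) \subset \bigcup_{v'}\Theta_r^\text{out}(v')$.

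Finally we count. The map $v \mapsto b_v$ is injective, because the sets $\Theta_r^\text{out}(v)$, $v\in\Z^d$, are pairwise disjoint: their centres are $7\lceil r^{-(1/2+\epsilon)}\rceil$ apart in $\ell^\infty$, while each has radius at most $3r^{-(1/2+\epsilon)}$. Therefore
\begin{align*}
	\#W_r(x) \geq \#V_r \geq r^{-d\{(d-1)/4-2\epsilon\}},
\end{align*}
where the last step holds since $V_r$ is a lattice box of radius $r^{-(d-1)/4+2\epsilon}$, which is large. The main point needing care is the induction step: we must check that the passage times are taken over the correct restricted sets, so that concatenation is legitimate. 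The assumption $\bigcup_v \Theta_r^\text{out}(v)\subset\Lambda_r$ is only needed to make sense of the later use of $W_r(x)$ and plays no role in this count.
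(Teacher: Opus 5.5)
Your proposal is correct and follows essentially the same route as the paper. You chain from $x$ through $\ell^\infty$-adjacent boxes from $0$ to $v$ using the first condition of $\mathcal{S}_r(\cdot)$, exit into $\Theta_r^{\mathrm{out}}(v)$ via the second condition, bound the restricted passage time by $(\|v\|_\infty+1)r^{-(1+3\epsilon)}\le 2r^{-(d+1)/2}$, and count using the pairwise disjointness of the sets $\Theta_r^{\mathrm{out}}(v)$. Your induction on $\|v\|_\infty$ and your check that each intermediate point $a_u$ lies in $\bigcup_{v'\in V_r}\Theta_r(v')$ (so that concatenating restricted paths is admissible) only make explicit what the paper leaves implicit.
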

\begin{proof}
Assume that $r$ is small enough to have $\bigcup_{v \in V_r} \Theta_r^\mathrm{out}(v) \subset \Lambda_r$ and $\mathcal{A}_{r,1}$ occurs.
Fix an occupied site $x \in \Theta_r^\text{in}(0)$ (note that on $\mathcal{A}_{r,1}$, we have at least one occupied site in $\Theta_r^\text{in}(0)$).
Then, for any $u \in V_r$, there exist a sequence $(u_i,a_i)_{i=0}^\ell$ (with $\ell:=\|u\|_\infty$) and an occupied site $b \in \Theta_r^\text{out}(u)$ satisfying the following conditions:
\begin{itemize}
\item $u_0,\dots,u_\ell$ are distinct points in $V_r$ such that $u_0=0$, $u_\ell=u$ and $u_i \overset{*}{\sim} u_{i+1}$ for each $i \in [0,\ell-1]$.

\item $a_0=x$ and $a_i \in \Theta_r^\text{in}(u_i) \cap \mathcal{O}$ for each $i \in [1,\ell]$.

\item $T_{\Theta_r(u_i)}(a_i,a_{i+1}) \leq r^{-(1+3\epsilon)}$ for each $i \in [0,\ell-1]$ and $T_{\Theta_r(u)}(a_\ell,b) \leq r^{-(1+3\epsilon)}$.
\end{itemize}
Hence, due to $\ell \leq r^{-(d-1)/4+2\epsilon}$ and $\epsilon \leq 1/4$ (see \eqref{eq:eps}),
\begin{align*}
	T_{\bigcup_{v \in V_r} \Theta_r(v)}(x,b)
	&\leq \sum_{i=0}^{\ell-1} T_{\Theta_r(u_i)}(a_i,a_{i+1})+T_{\Theta_r(u)}(a_\ell,b)\\
	&\leq (\ell+1)r^{-(1+3\epsilon)}
	\leq 2r^{-(d-1)/4+2\epsilon-(1+3\epsilon)}
	\leq 2r^{-(d+1)/2}.
\end{align*}
This implies that
\begin{align*}
	\#\Bigl( \Bigl\{ w \in \Theta_r^\text{out}(u): T_{\bigcup_{v \in V_r} \Theta_r(v)}(x,w) \leq 2r^{-(d+1)/2} \Bigr\} \cap \mathcal{O} \Bigr)
	\geq \#\{ b \} \geq 1.
\end{align*}
Since $\Theta_r^\text{out}(u)$'s are disjoint, it follows that for any occupied site $x \in \Theta_r^\text{in}(0)$,
\begin{align*}
	\# W_r(x) \geq \# V_r \geq r^{-d\{ (d-1)/4-2\epsilon \}},
\end{align*}
and the proof is complete.
\end{proof}

The following lemma is used to realize $\mathcal{A}_{r,2}$.

\begin{lem}\label{lem:all_visit}
If $r \in (0,1]$ is small enough (depending on $d$), then for any $A \subset \Lambda_r$ with $\# A \geq r^{-d\{ (d-1)/4-2\epsilon \}}$,
\begin{align*}
	P\Bigl( \max_{y \in \Lambda_r} \min_{w \in A} H(w,y)>4dr^{-(d+1)/2} \Bigr)
	\leq \exp\{ -r^{-1/4} \}.
\end{align*}
\end{lem}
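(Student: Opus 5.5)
The plan is a union bound over the target site $y$, combined with independence of the walks started from distinct points of $A$ and a uniform lower bound on the probability that a single simple random walk hits a given point of $\Lambda_r$ within time $t_r:=4dr^{-(d+1)/2}$. Write $L:=r^{-(d+1)/4}$, so that $\Lambda_r=B_\infty(0,L)\cap\Z^d$ and $t_r=4dL^2$, and set $N_r:=r^{-d\{(d-1)/4-2\epsilon\}}$. For fixed $y\in\Lambda_r$, the walks $(S^w_\cdot)_{w\in A}$ are independent, so
\begin{align*}
	P\Bigl(\min_{w\in A}H(w,y)>t_r\Bigr)=\prod_{w\in A}\bigl(1-P(H(w,y)\le t_r)\bigr)\le \exp\Bigl\{-\#A\cdot\min_{w\in\Lambda_r}P(H(w,y)\le t_r)\Bigr\}.
\end{align*}
Since $\#\Lambda_r\le (3L)^d$ is only polynomial in $r^{-1}$, it then suffices to show that $N_r\, q_r\ge 2r^{-1/4}$ for small $r$. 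Here
\begin{align*}
	q_r:=\min_{w,y\in\Lambda_r}P(H(w,y)\le t_r),
\end{align*}
and the polynomial prefactor $\#\Lambda_r$ is absorbed into the remaining factor $e^{-r^{-1/4}}$.

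The key step is the lower bound
\begin{align*}
	q_r\ge c\,L^{2-d}/\log L\quad(\text{in all } d\ge2).
\end{align*}
I would prove it by a second-moment-free Green's function argument. Let $N_{w,y}$ be the number of visits of $S^w_\cdot$ to $y$ during $[0,t_r]$. First, $\|w-y\|_2\le 2\sqrt d L$ and $t_r=4dL^2$, so the local central limit theorem gives $P(S^w_k=y)\ge c k^{-d/2}$ for all $k\in[L^2,t_r]$ with the correct parity. Summing over these $k$ yields $E[N_{w,y}]\ge cL^{2-d}$ (a constant if $d=2$). Second, by the strong Markov property at $H(w,y)$, $E[N_{w,y}]\le P(H(w,y)\le t_r)\,E[N_{y,y}]$, where $E[N_{y,y}]\le G(0,0)<\infty$ if $d\ge3$ and $E[N_{y,y}]\le c\log t_r\le c'\log L$ if $d=2$. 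Dividing the two bounds gives the claimed estimate for $q_r$.

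With this in hand, the arithmetic is
\begin{align*}
	N_r q_r\ge c\, r^{-d(d-1)/4+2d\epsilon}\, r^{(d-2)(d+1)/4}/\log(r^{-1})=c\,r^{-1/2+2d\epsilon}/\log(r^{-1}),
\end{align*}
using $d(d-1)-(d-2)(d+1)=2$. By \eqref{eq:eps}, $2d\epsilon=1/6$, so $N_rq_r\ge c\,r^{-1/3}/\log(r^{-1})$, which is at least $2r^{-1/4}$ for $r$ small. Then
\begin{align*}
	\#\Lambda_r\, e^{-N_rq_r}\le (3L)^d e^{-2r^{-1/4}}\le e^{-r^{-1/4}},
\end{align*}
which is the claim.

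The main obstacle I expect is the uniform local CLT lower bound $P(S^w_k=y)\ge ck^{-d/2}$ over the whole range $k\in[L^2,4dL^2]$ and all $w,y\in\Lambda_r$. This requires $\|w-y\|_2^2/k$ to stay bounded, which it does since $\|w-y\|_2^2\le 4dL^2$. It also requires handling parity, which I would do by summing only over $k$ with $k\equiv\|y-w\|_1 \pmod 2$. I would quote the local CLT from \cite{LawLim10_book} for this; the remaining steps are routine.
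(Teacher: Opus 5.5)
Your proof is correct, and its skeleton is the paper's: a union bound over $y\in\Lambda_r$, independence of the walks $(S^w_\cdot)_{w\in A}$ to turn the event into a product, a uniform lower bound on the single-walk hitting probability $P(H(w,y)\le 4dr^{-(d+1)/2})$, and the same exponent bookkeeping ($d(d-1)-(d-2)(d+1)=2$ and $2d\epsilon=1/6$).

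The one genuine difference is where the hitting estimate comes from. The paper quotes \eqref{eq:AMP} from \cite[Theorem~2.2-(ii)]{AlvMacPop02}, which gives the sharp bounds $c\|z\|_2^{2-d}$ in $d\ge3$ and $c/\log(1+\|z\|_2)$ in $d=2$ whenever $n\ge\|z\|_2^2$. You instead derive it yourself in two steps. First, the local CLT over times $k\in[L^2,4dL^2]$ of the right parity gives a lower bound on $E[N_{w,y}]$. Second, the first-entrance inequality $E[N_{w,y}]\le P(H(w,y)\le t_r)\,E[N_{y,y}]$ turns this into a bound on the hitting probability.

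Your route is self-contained: it needs only the LCLT from \cite{LawLim10_book} and the strong Markov property. The uniform $1/\log L$ you carry in $d\ge3$ is unnecessary, since there $E[N_{y,y}]\le G(0,0)$ already gives $cL^{2-d}$. It is also harmless, because the resulting exponent $r^{-1/3}$ leaves polynomial room over the required $r^{-1/4}$. The paper's citation keeps the argument short, and the same estimate is reused in the proof of Lemma~\ref{lem:range_lower}, so quoting it once serves both places.
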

\begin{proof}
The union bound and the independence of $S$ show that for any $A \subset \Lambda_r$ with $\# A \geq r^{-d\{ (d-1)/4-2\epsilon \}}$,
\begin{align}\label{eq:all_visit}
\begin{split}
	&P\Bigl( \max_{y \in \Lambda_r} \min_{w \in A} H(w,y)>4dr^{-(d+1)/2} \Bigr)\\
	&\leq \sum_{y \in \Lambda_r \setminus A} \prod_{w \in A} \bigl\{ 1-P\bigl( H(w,y) \leq 4dr^{-(d+1)/2} \bigr) \bigr\}.
\end{split}
\end{align}
To estimate the probabilities of the right side in \eqref{eq:all_visit}, we recall the result obtained in \cite[Theorem~{2.2}-(ii)]{AlvMacPop02}:
there exists a constant $c \in (0,1)$ (which depends only on $d$) such that for any $z \in \Z^d \setminus \{0\}$ and $n \geq \|z\|_2^2$,
\begin{align}\label{eq:AMP}
	P(H(0,z) \leq n) \geq c \times
	\begin{dcases*}
		\frac{1}{\log(1+\|z\|_2)}, & if $d=2$,\\[0.5em]
		\|z\|_2^{-d+2}, & if $d \geq 3$.
	\end{dcases*}	
\end{align}
This, combined with the translation invariance of $S$, implies that if $r$ is small enough (depending on $d$), then there exists a constant $c' \in (0,1)$ (which depends only on $d$) such that for all $y \in \Lambda_r \setminus A$ and $w \in A$,
\begin{align*}
	P\bigl( H(w,y) \leq 4dr^{-(d+1)/2} \bigr) \geq c' \times
	\begin{dcases*}
		\frac{1}{|\log r|}, & if $d=2$,\\[0.5em]
		r^{(d+1)(d-2)/4}, & if $d \geq 3$.
	\end{dcases*}
\end{align*}
Therefore, in the case where $d=2$, for all $r$ sufficiently small (depending on $d$), the right side of \eqref{eq:all_visit} is bounded from above by
\begin{align*}
	&\bigl( 2r^{-3/4}+1 \bigr)^2 \Bigl( 1-\frac{c'}{|\log r|} \Bigr)^{r^{-1/2+4\epsilon}}\\
	&\leq \bigl( 2r^{-3/4}+1 \bigr)^2 \exp\biggl\{ -\frac{c'r^{-1/2+4\epsilon}}{|\log r|} \biggr\}
	\leq \exp\{ -r^{-1/4} \}.
\end{align*}
Here we used the fact that $\epsilon=1/24$ (see \eqref{eq:eps}) in the last inequality.
Similarly, in the case where $d \geq 3$, for all $r$ sufficiently small (depending on $d$), the right side of \eqref{eq:all_visit} is smaller than or equal to
\begin{align*}
	&\bigl( 2r^{-(d+1)/4}+1 \bigr)^d \Bigl( 1-c'r^{(d+1)(d-2)/4} \Bigr)^{r^{-d\{ (d-1)/4-2\epsilon \}}}\\
	&\leq \bigl( 2r^{-(d+1)/4}+1 \bigr)^d \exp\{ -c'r^{-1/2+2d\epsilon} \}
	\leq \exp\{ -r^{-1/4} \},
\end{align*}
and the proof is complete.
\end{proof}

Let us move on to the proof of Proposition~\ref{prop:activating}.

\begin{proof}[\bf Proof of Proposition~\ref{prop:activating}]
Let $r$ be small enough to satisfy $\bigcup_{v \in V_r} \Theta_r^\text{out}(v) \subset \Lambda_r$.
Then, Lemma~\ref{lem:W} implies that on the event $\mathcal{A}_{r,1} \cap \mathcal{A}_{r,2}$, for any occupied site $x \in \Theta_r^\text{in}(0)$ and for any $y \in \Lambda_r$, there exists $w \in \bigcup_{v \in V_r} \Theta_r^\text{out}(v) \subset \Lambda_r$ such that
\begin{align*}
	T_{\Lambda_r}(x,y)
	&\leq T_{\bigcup_{v \in V_r} \Theta_r(v)}(x,w)+H(w,y)\\
	&\leq 2r^{-(d+1)/2}+4dr^{-(d+1)/2} \leq 5dr^{-(d+1)/2}.
\end{align*}
Hence, $\mathcal{A}_{r,1} \cap \mathcal{A}_{r,2} \subset \mathcal{A}_r$ holds, and our task is to show that
\begin{align}\label{eq:activating}
	\lim_{r \searrow 0} \bP_r((\mathcal{A}_{r,1} \cap \mathcal{A}_{r,2})^c)=0.
\end{align}

First, we use the union bound and Proposition~\ref{prop:sowing} to obtain for all $r$ sufficiently small (depending on $d$),
\begin{align*}
	\bP_r(\mathcal{A}_{r,1}^c)
	\leq \sum_{v \in V_r} \bP_r(\mathcal{S}_r(v)^c)
	\leq \bigl( 2r^{-(d-1)/4+2\epsilon}+1 \bigr)^d \exp\{ -r^{-\epsilon/6} \},
\end{align*}
which yields $\lim_{r \searrow 0} \bP_r(\mathcal{A}_{r,1}^c)=0$.
Next, the union bound and Lemma~\ref{lem:W} prove that if $r$ is small enough to satisfy $\bigcup_{v \in V_r} \Theta_r^\text{out}(v) \subset \Lambda_r$, then
\begin{align}\label{eq:A2}
\begin{split}
	&\bP_r(\mathcal{A}_{r,1} \cap \mathcal{A}_{r,2}^c)\\
	&\leq \sum_{x \in \Theta_r^\text{in}(0)} \bP_r\mleft(
		\begin{minipage}{18.5em}
			$\# W_r(x) \geq r^{-d\{ (d-1)/4-2\epsilon \}}$ and\\
			$\max_{y \in \Lambda_r} \min_{w \in W_r(x)} H(w,y)>4dr^{-(d+1)/2}$
		\end{minipage}
		\mright).
\end{split}
\end{align}
Noting that $W_r(x)$ depends only on $\omega$ and $S_\cdot^z$, $z \in \bigcup_{v \in V_r} \Theta_r(v)$, one can apply Lemma~\ref{lem:all_visit} to estimate each summand in \eqref{eq:A2}:
\begin{align*}
	&\bP_r\mleft(
	\begin{minipage}{18.5em}
		$\# W_r(x) \geq r^{-d\{ (d-1)/4-2\epsilon \}}$ and\\
		$\max_{y \in \Lambda_r} \min_{w \in W_r(x)} H(w,y)>4dr^{-(d+1)/2}$
	\end{minipage}
	\mright)\\
	&= \sum_{\substack{A \subset \bigcup_{v \in V_r} \Theta_r^\text{out}(v)\\ \# A \geq r^{-d\{ (d-1)/4-2\epsilon \}}}} \bP_r(W_r(x)=A) \,P\Bigl( \max_{y \in \Lambda_r} \min_{w \in A} H(w,y)>4dr^{-(d+1)/2} \Bigr)\\
	&\leq \exp\{ -r^{-1/4} \}.
\end{align*}
Substituting this into \eqref{eq:A2} yields that for all $r$ sufficiently small (depending on $d$),
\begin{align*}
	\bP_r(\mathcal{A}_{r,1} \cap \mathcal{A}_{r,2}^c)
	\leq \bigl( 2r^{-(1/2+\epsilon)}+1 \bigr)^d \exp\{ -r^{-1/4} \},
\end{align*}
which implies $\lim_{r \searrow 0} \bP_r(\mathcal{A}_{r,1} \cap \mathcal{A}_{r,2}^c)=0$.
With these observations,
\begin{align*}
	\lim_{r \searrow 0} \bP_r((\mathcal{A}_{r,1} \cap \mathcal{A}_{r,2})^c)
	= \lim_{r \searrow 0} \bigl\{ \bP_r(\mathcal{A}_{r,1}^c)+\bP_r(\mathcal{A}_{r,1} \cap \mathcal{A}_{r,2}^c) \bigr\}
	=0,
\end{align*}
and \eqref{eq:activating} is proved.
\end{proof}

\subsubsection{Recursion}\label{subsect:recursion}
Based on the active frogs cultivated in the initial box $\Lambda_r$, we recursively propagate active frogs at an appropriate speed.
To do this, set $\delta:=(50d)^{-d/2}$ and recall that $\Cr{CKN}=\Cr{CKN}(d,\delta) \in (0,1)$ is the constant appearing in Lemma~\ref{lem:CKN_cor} (with the $\delta$ above).
Furthermore, define for any $r \in (0,1]$,
\begin{align*}
	\psi_d(r):=2d\Cr{CKN}^{-1}\delta_d(r)^2,\qquad \nu_d(r):=100d\lceil \psi_d(r) \rceil,
\end{align*}
where $\delta_d(\cdot)$ is the function given by \eqref{eq:delta}.
For each $\xi \sim 0$, we consider the boxes
\begin{align*}
	Q_{r,i}(\xi):=B_\infty\bigl( 3\lceil \psi_d(r)^{1/2} \rceil i\xi,\lceil \psi_d(r)^{1/2} \rceil \bigr) \cap \Z^d,\qquad i \in \N_0.
\end{align*}
Then, the sets $\Gamma_{r,i}(\xi)$, $i \in \N_0$, are inductively constructed as follows:
\begin{align*}
	&\Gamma_{r,0}(\xi):=Q_{r,0}(\xi) \cap \mathcal{O},\\
	&\Gamma_{r,i}(\xi):=\mathcal{R}_{\nu_d(r)}^{\Gamma_{r,i-1}(\xi)} \cap Q_{r,i}(\xi) \cap \mathcal{O},\qquad i \in \N.
\end{align*}
In addition, $\sigma_r(\xi)$ stands for the first index $i$ such that the active frogs starting from $\Gamma_{r,i-1}(\xi)$ fail to meet at least $\lceil 2d\Cr{CKN}^{-1}|\log r| \rceil$ sleeping frogs in $Q_{r,i}(\xi)$ by time $\nu_d(r)$, i.e., 
\begin{align*}
	\sigma_r(\xi):=\inf\bigl\{ i \in \N_0:\#\Gamma_{r,i}(\xi)<2d\Cr{CKN}^{-1}|\log r| \bigr\}.
\end{align*}

The following lemma states the number of consecutive successful transmissions of active frogs between adjacent boxes.

\begin{lem}\label{lem:connecting}
We have for any $\xi \sim 0$,
\begin{align*}
	\lim_{r \searrow 0} \bP_r\Bigl( \sigma_r(\xi) \leq r^{-d/2}\psi_d(r)^{-1/2} \Bigr)=0.
\end{align*}
\end{lem}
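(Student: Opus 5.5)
The plan is to show that each step of the recursion fails with probability much smaller than $1/N_r$, where $N_r:=r^{-d/2}\psi_d(r)^{-1/2}$, and then take a union bound over $i\le N_r$. Write $L_r:=2d\Cr{CKN}^{-1}|\log r|$. Then
\begin{align*}
	\bP_r\bigl(\sigma_r(\xi)\le N_r\bigr)\le \bP_r\bigl(\#\Gamma_{r,0}(\xi)<L_r\bigr)+\sum_{i=1}^{\lfloor N_r\rfloor}\bP_r\bigl(\#\Gamma_{r,i-1}(\xi)\ge L_r,\ \#\Gamma_{r,i}(\xi)<L_r\bigr).
\end{align*}
Note that $N_r\le r^{-(d-1)/2}$ for small $r$, since $\psi_d(r)\ge c/r$.

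\textbf{The initial term.} $\#\Gamma_{r,0}(\xi)$ is binomial with $\#Q_{r,0}(\xi)\ge (2\psi_d(r)^{1/2})^d$ trials. Its mean is at least $r\,2^d\psi_d(r)^{d/2}$. For $d=2$ this is $4r\psi_2(r)=8d\Cr{CKN}^{-1}|\log r|=4L_r$, and for $d\ge3$ it is of order $r^{1-d/2}\gg L_r$. The Chernoff bound then makes this term $o(1)$.

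\textbf{Independence structure.} For the inductive terms, the boxes $Q_{r,i}(\xi)$ are pairwise disjoint, because their centres are $3\lceil\psi_d(r)^{1/2}\rceil$ apart and their radius is $\lceil\psi_d(r)^{1/2}\rceil$. By induction, $\Gamma_{r,i-1}(\xi)\subset Q_{r,i-1}(\xi)$ is measurable with respect to $\omega$ on $Q_{r,0}(\xi)\cup\dots\cup Q_{r,i-1}(\xi)$ and the walks started in $Q_{r,0}(\xi)\cup\dots\cup Q_{r,i-2}(\xi)$. On the other hand, $\Gamma_{r,i}(\xi)$, given $\Gamma_{r,i-1}(\xi)=A$, depends only on the walks from $A\subset Q_{r,i-1}(\xi)$ and on $\omega$ restricted to $Q_{r,i}(\xi)$, and these are independent of the past. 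So, as in \eqref{eq:S2_rewritten}, I decompose over $\{\Gamma_{r,i-1}(\xi)=A\}$ with $\#A\ge L_r$ and apply Lemma~\ref{lem:CKN_cor} with $A$, $B:=Q_{r,i}(\xi)$, $n:=\nu_d(r)$ and $\delta=(50d)^{-d/2}$.

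\textbf{Hypotheses of Lemma~\ref{lem:CKN_cor}.} The distance condition holds because $\|x-y\|_2\le 5\sqrt d\lceil\psi_d(r)^{1/2}\rceil\le\sqrt{100d\lceil\psi_d(r)\rceil}$. The size condition holds because $\#B\ge(2\psi_d(r)^{1/2})^d$, while $\delta\,\nu_d(r)^{d/2}=(2\lceil\psi_d(r)\rceil)^{d/2}$, which is essentially $2^{d/2}\psi_d(r)^{d/2}$ and hence at most $\#B$. The ceiling needs a little slack, which is harmless.

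\textbf{The threshold is at least $L_r$.} I need $\frac r2\min\{\Cr{CKN}\phi_d(\nu_d(r))L_r,(1-\delta)\#B\}\ge L_r$. For the first entry, $\phi_d(\nu_d(r))\ge c/r$ in both regimes; for $d=2$ this uses $\log\nu_2(r)\sim|\log r|$ to cancel the $|\log r|$ in $\psi_2$. Hence $\frac r2\Cr{CKN}\phi_d(\nu_d(r))L_r\ge L_r$, once one checks that the constant is at least $2/\Cr{CKN}$. This is where the factor $100d$ in $\nu_d$ and the $2d\Cr{CKN}^{-1}$ in $\psi_d$ enter. For the second entry, $\frac r2(1-\delta)\#B$ is at least $\frac{1-\delta}{2}\cdot 8d\Cr{CKN}^{-1}|\log r|\ge L_r$ when $d=2$, and is of order $r^{1-d/2}$ when $d\ge3$.

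\textbf{Per-step bound and conclusion.} With the threshold at least $L_r$, Lemma~\ref{lem:CKN_cor} bounds each summand by
\begin{align*}
	\exp\{-\Cr{CKN}L_r\}+\exp\{-\tfrac r8\min\{\cdots\}\}\le r^{2d}+r^{d}.
\end{align*}
The first term is exactly $r^{2d}$. For the second, the minimum is at least $L_r$, so the exponent is at least $\tfrac{d}{4}\Cr{CKN}^{-1}|\log r|$; since $\Cr{CKN}<1$ this is at least $\tfrac d2|\log r|$ once $\Cr{CKN}\le1/2$, which can be assumed by shrinking the constant. Any rate $r^{(d-1)/2+\eta}$ would suffice. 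Multiplying by $N_r\le r^{-(d-1)/2}$ gives $o(1)$, which proves the lemma.

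\textbf{Main obstacle.} I expect the main difficulty to be the bookkeeping of constants in the previous two paragraphs. For $d=2$ the logarithms must cancel exactly, so that the threshold stays at least $L_r$ while each per-step failure probability still beats $N_r^{-1}=r^{1/2}\sqrt{|\log r|}$. If the constants come out tight, I would first try enlarging $L_r$ by a constant factor, or shrinking $\Cr{CKN}$, which the lemma allows.
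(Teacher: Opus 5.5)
Your proposal is correct and follows essentially the same route as the paper: the same decomposition over the first failure index, Chernoff for the initial box, conditioning on $\{\Gamma_{r,i-1}(\xi)=A\}$ using disjointness of the boxes $Q_{r,i}(\xi)$, Lemma~\ref{lem:CKN_cor} with $n=\nu_d(r)$ and $B=Q_{r,i}(\xi)$ to make the threshold at least $L_r$, and a union bound over $i\le N_r\le r^{-(d-1)/2}$. One small arithmetic correction: since $\frac r2\min\{\cdots\}\ge L_r$, you get $\frac r8\min\{\cdots\}\ge L_r/4=\frac d2\Cr{CKN}^{-1}|\log r|$ (not $\frac d4\Cr{CKN}^{-1}|\log r|$), so $\Cr{CKN}<1$ alone gives the per-step bound $r^{2d}+r^{d/2}$ and hence $N_r(r^{2d}+r^{d/2})\le 2\sqrt r$; there is no need to shrink $\Cr{CKN}$, which would in any case change $\psi_d$, $\nu_d$ and $\sigma_r$ themselves.
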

\begin{proof}
Fix $\xi \sim 0$ and let $r \in (0,1]$ be small enough to justify the argument below.
The definition of $\sigma_r(\xi)$ implies that
\begin{align}\label{eq:sigma}
\begin{split}
	&\bP_r\Bigl( \sigma_r(\xi) \leq r^{-d/2}\psi_d(r)^{-1/2} \Bigr)\\
	&= \P_r\Bigl( \#\Gamma_{r,0}(\xi)<2d\Cr{CKN}^{-1}|\log r| \Bigr)\\
	&\quad +\sum_{i=1}^{\lfloor r^{-d/2}\psi_d(r)^{-1/2} \rfloor} \bP_r\Bigl( \#\Gamma_{r,0}(\xi) \geq 2d\Cr{CKN}^{-1}|\log r|,\,\sigma_r(\xi)=i \Bigr).
\end{split}
\end{align}
Use the Chernoff bound for a sum of independent Bernoulli random variables to estimate the first term of the right side in \eqref{eq:sigma}:
\begin{align*}
	\P_r\Bigl( \#\Gamma_{r,0}(\xi)<2d\Cr{CKN}^{-1}|\log r| \Bigr)
	&\leq \P_r\Bigl( \#\Gamma_{r,0}(\xi)<\frac{r}{2}\# Q_{r,0}(\xi) \Bigr)\\
	&\leq \exp\Bigl\{ -\frac{1}{8}(2d\Cr{CKN}^{-1}|\log r|) \Bigr\}
	\leq r^{d/4}.
\end{align*}
Here we used the fact that $\Cr{CKN} \in (0,1)$ in the last inequality.
Let us next estimate the second term of the right side in \eqref{eq:sigma}.
Since $\# Q_{r,i}(\xi) \geq \delta \nu_d(r)^{d/2}$ holds (recall $\delta=(50d)^{-d/2}$), Lemma~\ref{lem:CKN_cor} implies that for any finite subset $\Gamma$ of $\Z^d$ with $\max\{ \|x-y\|_2:x \in \Gamma,\, y \in Q_{r,i}(\xi) \} \leq \sqrt{\nu_d(r)}$,
\begin{align}
\begin{split}\label{eq:CKN_app}
	&\bP_r\biggl( \#\bigl( \mathcal{R}_{\nu_d(r)}^\Gamma \cap Q_{r,i}(\xi) \cap \mathcal{O} \bigr)<\frac{r}{2}\min\bigl\{ \Cr{CKN}\phi_d(\nu_d(r))\,\#\Gamma,(1-\delta)\,\# Q_{r,i}(\xi) \bigr\} \biggr)\\
	&\leq \exp\{ -\Cr{CKN}\,\#\Gamma \}+\exp\Bigl\{ -\frac{r}{8}\min\bigl\{ \Cr{CKN}\phi_d(\nu_d(r))\,\#\Gamma,(1-\delta)\,\# Q_{r,i}(\xi) \bigr\} \Bigr\}.
\end{split}
\end{align}
By the definitions of $\phi_d(\cdot)$, $\nu_d(\cdot)$ and $Q_{r,i}(\xi)$, one has
\begin{align*}
	\phi_d(\nu_d(r)) \geq 4d\Cr{CKN}^{-1}r^{-1},\qquad
	\# Q_{r,i}(\xi) \geq \frac{4d\Cr{CKN}^{-1}}{1-\delta}r^{-1}|\log r|,
\end{align*}
which leads to
\begin{align*}
	\min\bigl\{ \Cr{CKN}\phi_d(\nu_d(r))\,\#\Gamma,(1-\delta)\,\# Q_{r,i}(\xi) \bigr\}
	\geq 4dr^{-1} \min\{ \#\Gamma,\Cr{CKN}^{-1}|\log r| \}.
\end{align*}
This together with \eqref{eq:CKN_app} tells us that if $\#\Gamma \geq 2d\Cr{CKN}^{-1}|\log r|$ holds, then
\begin{align*}
	&\bP_r\Bigl( \#\bigl( \mathcal{R}_{\nu_d(r)}^\Gamma \cap Q_{r,i}(\xi) \cap \mathcal{O} \bigr)<2d\Cr{CKN}^{-1}|\log r| \Bigr)\\
	&\leq \exp\bigl\{ -\Cr{CKN}(2d\Cr{CKN}^{-1}|\log r|) \bigr\}+\exp\Bigl\{ -\frac{r}{8}(4d\Cr{CKN}^{-1}r^{-1}|\log r|) \Bigr\}
	\leq 2r^{d/2}.
\end{align*}
Note that for any $i \in \N$, $\Gamma_{r,i-1}(\xi)$ is independent of $(S_\cdot^z)_{z \in Q_{r,i-1}(\xi)}$ under $P$ and $\max\{ \|x-y\|_2:x \in Q_{r,i-1}(\xi),\,y \in Q_{r,i}(\xi) \} \leq \sqrt{\nu_d(r)}$ holds.
Hence, for any $i \in \N$,
\begin{align*}
	&\bP_r\Bigl( \#\Gamma_{r,0}(\xi) \geq 2d\Cr{CKN}^{-1}|\log r|,\,\sigma_r(\xi)=i \Bigr)\\
	&\leq \sum_\Gamma \E_r\Bigl[ P\bigl( \Gamma_{r,i-1}(\xi)=\Gamma \bigr) P\Bigl(\,\#\bigl( \mathcal{R}_{\nu_d(r)}^\Gamma \cap Q_{r,i}(\xi) \cap \mathcal{O} \bigr)<2d\Cr{CKN}^{-1}|\log r| \Bigr) \Bigr]\\
	&\leq 2r^{d/2},
\end{align*}
where the sum is taken over all subsets $\Gamma$ of $Q_{r,i-1}(\xi)$ with $\#\Gamma \geq 2d\Cr{CKN}^{-1}|\log r|$.
Since $r^{-d/2}\psi_d(r)^{-1/2} \leq r^{-(d-1)/2}$, it follows that
\begin{align*}
	\sum_{i=1}^{\lfloor r^{-d/2}\psi_d(r)^{-1/2} \rfloor} \bP_r\Bigl( \#\Gamma_{r,0}(\xi) \geq 2d\Cr{CKN}^{-1}|\log r|,\,\sigma_r(\xi)=i \Bigr)
	\leq 2\sqrt{r}.
\end{align*}
Consequently, for all $r$ sufficiently small (depending on $d$),
\begin{align*}
	\bP_r\Bigl( \sigma_r(\xi) \leq r^{-d/2}\psi_d(r)^{-1/2} \Bigr)
	\leq r^{d/4}+2\sqrt{r}
	\leq 3\sqrt{r},
\end{align*}
and the lemma follows by letting $r \searrow 0$.
\end{proof}

We now choose
\begin{align}\label{eq:rho}
	\rho=\rho(d):=410d^2\Cr{CKN}^{-1}
\end{align}
(which is the constant appearing in the concept of $r$-good, see Section~\ref{subsect:pf_upper}), and prove Proposition~\ref{prop:good}.

\begin{proof}[\bf Proof of Proposition~\ref{prop:good}]
By the translation invariance of $(\omega,S)$, it suffices to show that
\begin{align}\label{eq:good_goal}
	\lim_{r \searrow 0} \bP_r(\text{$0$ is $r$-good})=1.
\end{align}
It is clear that for any $\xi \sim 0$ and $i \in \N_0$, the side length of $Q_{r,i}(\xi)$ is bounded from above by a multiple of $\sqrt{r^{-1}|\log r|}$, and so is the $\ell^1$-distance between the centers of $Q_{r,i}(\xi)$ and $Q_{r,i+1}(\xi)$.
This implies that for all $r$ sufficiently small, $Q_{r,0}(\xi) \subset \Lambda_r$ and $Q_{r,i}(\xi) \subset B_\infty(\lceil r^{-d/2} \rceil \xi,r^{-(1/2+\epsilon)})$ for some $i \in [0,r^{-d/2}\psi_d(r)^{-1/2}]$.
Hence, if the event
\begin{align*}
	\mathcal{A}_r \cap \bigcap_{\xi \sim 0} \bigl\{ \sigma_r(\xi)>r^{-d/2}\psi_d(r)^{-1/2} \bigr\}
\end{align*}
happens, then $\Theta_r^\text{in}(0)=B_\infty(0,r^{-(1/2+\epsilon)})$ contains at least one occupied site, and for any occupied site $x \in \Theta_r^\text{in}(0)$ and $\xi \sim 0$, there exists an occupied site $y \in B_\infty(\lceil r^{-d/2} \rceil \xi,r^{-(1/2+\epsilon)})$ such that
\begin{align*}
	T_{B_\infty(0,2\lceil r^{-d/2} \rceil)}(x,y)
	&\leq \max_{z \in \Lambda_r} T_{\Lambda_r}(x,z)+r^{-d/2}\psi_d(r)^{-1/2} \times \nu_d(r)\\
	&\leq 5dr^{-(d+1)/2}+200dr^{-d/2}\psi_d(r)^{1/2}\\
	&\leq \rho r^{-d/2}\delta_d(r),
\end{align*}
which implies that $0$ is $r$-good.
Therefore, the union bound, Proposition~\ref{prop:activating} and Lemma~\ref{lem:connecting} show that
\begin{align*}
	&\lim_{r \searrow 0} \bP_r(\text{$0$ is not $r$-good})\\
	&\leq \lim_{r \searrow 0} \biggl\{ \bP_r(\mathcal{A}_r^c)+\sum_{\xi \sim 0} \bP_r\Bigl( \sigma_r(\xi) \leq r^{-d/2}\psi_d(r)^{-1/2} \Bigr) \biggr\}
	=0,
\end{align*}
and \eqref{eq:good_goal} follows.
\end{proof}

\appendix
\section{Proof of Proposition~\ref{prop:CKN}}\label{app:CKN}
This appendix is devoted to the proof of Proposition~\ref{prop:CKN}.
It was originally proved in \cite{CanKubNak25_IP}, but we reproduce it for the convenience of the reader.

Let us first estimate how many sites in any relatively large subset of $B_2(0,\sqrt{n}) \cap \Z^d$ the simple random walk starting at $0$ can visit by time $n$.

\begin{lem}\label{lem:range_lower}
For any $\epsilon>0$, there exists a constant $\Cl{app1}=\Cr{app1}(d,\epsilon) \in (0,1)$ such that for any $n \geq 2$ and for any subset $\Gamma$ of $B_2(0,\sqrt{n}) \cap \Z^d$ with $\#\Gamma \geq \epsilon n^{d/2}$,
\begin{align*}
	P\bigl( \#(\mathcal{R}_n^0 \cap \Gamma) \geq \Cr{app1}\phi_d(n) \bigr) \geq \Cr{app1},
\end{align*}
where $\phi_d(\cdot)$ is the function appearing in Proposition~\ref{prop:CKN}.
\end{lem}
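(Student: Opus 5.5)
We propose a second moment (Paley--Zygmund) argument for the random variable $N:=\#(\mathcal{R}_n^0\cap\Gamma)=\sum_{z\in\Gamma}\1{\{H(0,z)\le n\}}$. The plan is to show $E[N]\ge c\,\phi_d(n)$ and $E[N^2]\le C\,\phi_d(n)^2$ with constants depending only on $d$ and $\epsilon$. Then Paley--Zygmund gives
\begin{align*}
	P\Bigl(N\ge \tfrac{c}{2}\phi_d(n)\Bigr)\ge \frac{E[N]^2}{4E[N^2]}\ge \frac{c^2}{4C},
\end{align*}
and we can take $\Cr{app1}:=\min\{c/2,c^2/(4C),1/2\}$.

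\textbf{First moment.} We bound the hitting probability from below by \eqref{eq:AMP} (the result of \cite{AlvMacPop02}), which requires $n\ge\|z\|_2^2$. This holds for every $z\in\Gamma\subset B_2(0,\sqrt n)$.
\begin{itemize}
\item For $d\ge3$: $P(H(0,z)\le n)\ge c\|z\|_2^{2-d}\ge c\,n^{(2-d)/2}$ for $z\ne0$. Summing over at least $\epsilon n^{d/2}-1$ points gives $E[N]\ge c'\epsilon n=c'\epsilon\phi_d(n)$.
\item For $d=2$: $P(H(0,z)\le n)\ge c/\log(1+\sqrt n)\ge c'/\log n$. Summing over at least $\epsilon n$ points gives $E[N]\ge c'\epsilon\, n/\log n$.
\end{itemize}
For small $n$ the claim is trivial, since $0\in\mathcal{R}_n^0$ and we may shrink $\Cr{app1}$. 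So we may assume $n$ is large.

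\textbf{Second moment.} We use the trivial bound $N\le\#\mathcal{R}_n^0$, so $E[N^2]\le E[(\#\mathcal{R}_n^0)^2]$, and we need $E[(\#\mathcal{R}_n^0)^2]\le C\phi_d(n)^2$.
\begin{itemize}
\item For $d\ge3$ this is immediate, because $\#\mathcal{R}_n^0\le n+1$.
\item For $d=2$ we need the second moment of the range to be of order $(n/\log n)^2$. This follows from the concentration of the planar range, $\operatorname{Var}(\#\mathcal{R}_n^0)=O(n^2/(\log n)^4)$ (Jain--Pruitt), combined with $E[\#\mathcal{R}_n^0]\le cn/\log n$ from \cite{DvoErd51}.
\end{itemize}

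\textbf{Self-contained alternative for $d=2$.} Instead of citing the variance estimate, we can bound $E[N^2]\le 2\sum_{z,w}P(H(0,z)\le H(0,w)\le n)$. The strong Markov property at $H(0,z)$ gives $P(H(0,z)\le H(0,w)\le n)\le P(H(0,z)\le n)\,P(H(z,w)\le n)$. For a fixed $z$, the sum over $w$ is $E[\#\mathcal{R}_n^z]\le cn/\log n$. Summing then over $z$ gives $E[N^2]\le 2E[N]\cdot cn/\log n\le C(n/\log n)^2$, because $E[N]\le E[\#\mathcal{R}_n^0]\le cn/\log n$. The same argument also covers $d\ge3$.

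The main obstacle is this second moment bound in $d=2$. The key point is that $\sum_w P(H(z,w)\le n)$ is the expected range, so the logarithmic gain from \cite{DvoErd51} appears in both factors. The rest is bookkeeping of constants.
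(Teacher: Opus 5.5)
Your argument is correct, and in its ``self-contained alternative'' form it is the paper's proof: Paley--Zygmund, the strong Markov decoupling $E[N^2]\le E[N]+2E[N]\sup_{x\in\Gamma}E[\#(\mathcal{R}_n^x\cap\Gamma)]$, the first-moment bound from \eqref{eq:AMP}, and the range bound of \cite{DvoErd51}, so the Jain--Pruitt variance estimate is unnecessary. One small slip: small $n$ is not trivial ``because $0\in\mathcal{R}_n^0$'', since $0$ need not lie in $\Gamma$. No case split is needed, however: the site $0$, if it is in $\Gamma$, contributes $1$ to $E[N]$, so $E[N]\ge c\,\#\Gamma\,n^{1-d/2}$ (resp.\ $c\,\#\Gamma/\log(1+\sqrt{n})$ for $d=2$) holds for every $n\ge 2$, as in the paper.
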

\begin{proof}
We begin by proving that for any $\Gamma \subset \Z^d$ and $n \in \N$,
\begin{align}\label{eq:PZ_goal}
	P\biggl( \#(\mathcal{R}_n^0 \cap \Gamma) \geq \half E[\#(\mathcal{R}_n^0 \cap \Gamma)] \biggr)
	\geq \frac{E[\#(\mathcal{R}_n^0 \cap \Gamma)]}{12 \sup_{x \in \Gamma}E[\#(\mathcal{R}_n^x \cap \Gamma)]}.
\end{align}
The Paley--Zygmund inequality implies that for any $\Gamma \subset \Z^d$ and $n \in \N$,
\begin{align}\label{eq:PZ}
	P\biggl( \#(\mathcal{R}_n^0 \cap \Gamma) \geq \half E[\#(\mathcal{R}_n^0 \cap \Gamma)] \biggr)
	\geq \frac{E[\#(\mathcal{R}_n^0 \cap \Gamma)]^2}{4E[\{ \#(\mathcal{R}_n^0 \cap \Gamma) \}^2]}.
\end{align}
Note that the expectation in the denominator can be rewritten as follows:
\begin{align*}
	E\bigl[ \{\#(\mathcal{R}_n^0 \cap \Gamma)\}^2 \bigr]
	&= \sum_{x,y \in \Gamma} P(H(0,x) \leq n,H(0,y) \leq n)\\
	&= E[\#(\mathcal{R}_n^0 \cap \Gamma)]+\sum_{\substack{x,y \in \Gamma\\ x \not= y}} P(H(0,x) \leq n,H(0,y) \leq n).
\end{align*}
We use the strong Markov property to estimate the probabilities in the rightmost side from above: for any distinct $x,y \in \Gamma$,
\begin{align*}
	&P(H(0,x) \leq n,H(0,y) \leq n)\\
	&= P(H(0,x)<H(0,y) \leq n)+P(H(0,y)<H(0,x) \leq n)\\
	&\leq P(H(0,x) \leq n)P(H(x,y) \leq n)+P(H(0,y) \leq n)P(H(y,x) \leq n).
\end{align*}
Hence, from the fact that $E[\#(\mathcal{R}_n^x \cap \Gamma)] \geq 1$ for all $x \in \Gamma$,
\begin{align*}
	E\bigl[ \{ \#(\mathcal{R}_n^0 \cap \Gamma) \}^2 \bigr]
	&\leq E[\#(\mathcal{R}_n^0 \cap \Gamma)]+2\sum_{\substack{x,y \in \Gamma\\ x \not= y}} P(H(0,x) \leq n)P(H(x,y) \leq n)\\
	&\leq E[\#(\mathcal{R}_n^0 \cap \Gamma)]+2\sup_{x \in \Gamma} E[\#(\mathcal{R}_n^x \cap \Gamma)] \times E[\#(\mathcal{R}_n^0 \cap \Gamma)]\\
	&\leq 3E[\#(\mathcal{R}_n^0 \cap \Gamma)] \times \sup_{x \in \Gamma} E[\#(\mathcal{R}_n^x \cap \Gamma)].
\end{align*}
This combined with \eqref{eq:PZ} yields that
\begin{align*}
	P\biggl( \#(\mathcal{R}_n^0 \cap \Gamma) \geq \half E[\#(\mathcal{R}_n^0 \cap \Gamma)] \biggr)
	&\geq \frac{E[\#(\mathcal{R}_n^0 \cap \Gamma)]^2}{12E[\#(\mathcal{R}_n^0 \cap \Gamma)] \times \sup_{x \in \Gamma} E[\#(\mathcal{R}_n^x \cap \Gamma)]}\\
	&= \frac{E[\#(\mathcal{R}_n^0 \cap \Gamma)]}{12\sup_{x \in \Gamma} E[\#(\mathcal{R}_n^x \cap \Gamma)]},
\end{align*}
which proves \eqref{eq:PZ_goal}.

Let us complete the proof of the lemma.
Fix $n \in \N$ and a subset $\Gamma$ of $B_2(0,\sqrt{n}) \cap \Z^d$ with $\#\Gamma \geq \epsilon n^{d/2}$.
Then, by \eqref{eq:AMP}, there exists a constant $c \in (0,1)$ (which depends only on $d$) such that for all $n \in \N$,
\begin{align*}
	E[\#(\mathcal{R}_n^0 \cap \Gamma)]
	&= \sum_{x \in \Gamma \setminus \{0\}} P(H(0,x) \leq n)+\1{\{ 0 \in \Gamma \}}\\
	&\geq c(\#\Gamma) \times
	\begin{dcases*}
		\frac{1}{\log(1+\sqrt{n})}, & if $d=2$,\\[0.5em]
		n^{1-d/2}, & if $d \geq 3$.
	\end{dcases*}
\end{align*}
Since $\#\Gamma \geq \epsilon n^{d/2}$, this implies that $E[\#(\mathcal{R}_n^0 \cap \Gamma)] \geq (2c/3)\epsilon \phi_d(n)$ for all $n \geq 2$.
On the other hand, \cite[Theorem~1]{DvoErd51} guarantees that there exists a constant $c'$ (which depends only on $d$) such that $\sup_{x \in \Gamma}E[\#(\mathcal{R}^x_n \cap \Gamma)] \leq E[\#\mathcal{R}^0_n] \leq c'\phi_d(n)$ for all $n \in \N$.
Therefore, \eqref{eq:PZ_goal} shows that for all $n \geq 2$, 
\begin{align*}
	P\biggl( \#(\mathcal{R}_n^0 \cap \Gamma) \geq \frac{1}{3} \epsilon c\mathcal{\phi}_d(n) \biggr)
	&\geq P\biggl( \#(\mathcal{R}_n^0 \cap \Gamma) \geq \half E[\#(\mathcal{R}_n^0 \cap \Gamma)] \biggr)\\
	&\geq \frac{E[\#(\mathcal{R}_n^0 \cap \Gamma)]}{12 \sup_{x \in \Gamma}E[\#(\mathcal{R}_n^x \cap \Gamma)]}\\
	&\geq \frac{c\epsilon \phi_d(n)}{18c'\phi_d(n)}
	= \frac{c\epsilon}{18c'},
\end{align*}
and the desired lower bound is obtained by taking $\Cr{app1}:=c\epsilon/(18c')$.
\end{proof}

We will also use the following upper deviation bound for a sum of Bernoulli random variables. A proof can be found for example in~\cite[Corollary~2.4.7]{DemZei10_book}. 

\begin{lem}\label{lem:chernoff}
Let $q \in (0,1)$ and let $(\mathcal{F}_i)_{i=0}^\infty$ be a filtration with the trivial $\sigma$-field $\mathcal{F}_0$.
Moreover, assume that $(X_i)_{i=1}^\infty$ is a family of $(\mathcal{F}_i)_{i=1}^\infty$-adapted $\{0,1\}$-valued random variables satisfying that for any $i=1,\dots,\ell$, $Q(X_i=1|\mathcal{F}_{i-1}) \leq q$ holds almost surely (where $Q$ is the underlying probability measure of $(X_i)_{i=1}^\infty$).
Then, there exists a constant $\Cl{app2} \in (0,1)$ (which depends only on $q$) such that for all $n \in \N$,
\begin{align*}
	Q\biggl( \sum_{i=1}^n X_i \geq (1-\Cr{app2})n \biggr) \leq \exp\{ -\Cr{app2}n \}.
\end{align*}
\end{lem}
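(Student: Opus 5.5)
The statement is Lemma~\ref{lem:chernoff}, a Chernoff-type bound for adapted Bernoulli variables whose conditional success probabilities are at most $q$. I will prove it with an exponential supermartingale, tilting by a parameter $\lambda>0$.

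\emph{Step 1: one-step bound.} Since each $X_i$ is $\{0,1\}$-valued and $Q(X_i=1|\mathcal{F}_{i-1})\le q$ almost surely,
\begin{align*}
	E_Q\bigl[e^{\lambda X_i}\,\big|\,\mathcal{F}_{i-1}\bigr]=1+(e^\lambda-1)Q(X_i=1|\mathcal{F}_{i-1})\le 1-q+qe^\lambda=:m(\lambda).
\end{align*}

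\emph{Step 2: iterate.} Condition successively on $\mathcal{F}_{n-1},\mathcal{F}_{n-2},\dots$ and use that $\mathcal{F}_0$ is trivial. This gives
\begin{align*}
	E_Q\bigl[e^{\lambda\sum_{i=1}^n X_i}\bigr]\le m(\lambda)^n .
\end{align*}

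\emph{Step 3: Markov's inequality.} For any $a\in(q,1)$,
\begin{align*}
	Q\Bigl(\sum_{i=1}^n X_i\ge an\Bigr)\le \exp\{-n(\lambda a-\log m(\lambda))\}.
\end{align*}
Optimizing over $\lambda$ yields the exponent $I(a)=a\log(a/q)+(1-a)\log((1-a)/(1-q))$, which is strictly positive for $a>q$. Alternatively, one can avoid optimizing and simply observe that $\lambda a-\log m(\lambda)\to -\log q>0$ as $a\to1$ and $\lambda\to\infty$ in a suitable order. Either way, for $a$ close to $1$ the exponent is positive.

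\emph{Step 4: choose the constant.} It remains to choose $\Cr{app2}\in(0,1)$, depending only on $q$, with $a=1-\Cr{app2}>q$ and $I(1-\Cr{app2})\ge\Cr{app2}$. This is possible by continuity, since $I(1-c)\to\log(1/q)>0$ as $c\searrow0$: take $c$ smaller than both $1-q$ and $\tfrac12\log(1/q)$ and close enough to $0$ that $I(1-c)\ge \tfrac12\log(1/q)$.

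There is no real obstacle here; the only care needed is the conditioning in Step 2, which is exactly where adaptedness and the almost-sure conditional bound are used.
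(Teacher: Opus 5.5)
Your proof is correct. The one-step bound $E_Q[e^{\lambda X_i}\mid\mathcal{F}_{i-1}]\le 1-q+qe^{\lambda}$ holds for $\lambda>0$, and the iteration uses only that $\sum_{i<n}X_i$ is $\mathcal{F}_{n-1}$-measurable. The optimal tilt $\lambda^*=\log\frac{a(1-q)}{q(1-a)}$ is positive exactly when $a>q$, and it yields the relative-entropy rate $I(a)$. The choice of the constant is also valid: since $I(1-c)\to\log(1/q)$ as $c\searrow 0$, any $c<\min\{1-q,\tfrac12\log(1/q)\}$ small enough gives $I(1-c)\ge\tfrac12\log(1/q)>c$.

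The paper does not prove this lemma itself. It simply refers to Dembo--Zeitouni, Corollary~2.4.7, a general exponential (Bennett/Azuma-type) inequality for sums of martingale differences. To use that route, one first centers, $Y_i:=X_i-Q(X_i=1\mid\mathcal{F}_{i-1})$, so that the $Y_i$ are martingale differences bounded above by $1$. Then one notes $\sum_{i\le n}X_i\le\sum_{i\le n}Y_i+qn$ and applies the martingale bound to the event $\{\sum_{i\le n}Y_i\ge(1-q-c)n\}$. This buys brevity at the price of a black-box citation and some variance bookkeeping. Your direct exponential-supermartingale computation avoids the centering step and uses the hypothesis $Q(X_i=1\mid\mathcal{F}_{i-1})\le q$ exactly where it enters. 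It gives the same explicit rate as for i.i.d.\ Bernoulli$(q)$ variables, so it is self-contained and makes the admissible constant explicit.

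One small point you should state explicitly. As printed, the hypothesis is only assumed for $i=1,\dots,\ell$, while the conclusion is claimed for all $n\in\N$. Your Step~2 uses the conditional bound for every $i\le n$, so your argument proves the estimate for $n\le\ell$, or for all $n$ if the hypothesis holds for every $i$. That is exactly what is needed in the proof of Proposition~\ref{prop:CKN}, where the lemma is applied with $n=\ell$.
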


We are now in a position to prove Proposition~\ref{prop:CKN}.

\begin{proof}[\bf Proof of Proposition~\ref{prop:CKN}]
Fix $\delta \in (0,1)$ and $n \geq 2$.
Let $A$ and $B$ be finite subsets of $\Z^d$ with $\max\{ \| x-y \|_2:x \in A,\,y \in B \} \leq \sqrt{n}$ and $\# B \geq \delta n^{d/2}$.
We now enumerate $A=\{ x_1,\dots,x_\ell \}$ (with $\ell:=\# A$), and define $\Gamma_i:=B \setminus \bigcup_{j=1}^{i-1}\mathcal{R}_n^{x_j}$ for each $i=1,\dots,\ell$.
Moreover, let $\tau$ denote the smallest integer $i \in [1,\ell]$ such that $\#\Gamma_i \leq \delta\,\# B$, with the convention that $\tau:=\infty$ if no such integer exists:
\begin{align*}
	\tau:=\inf\{ i \in [1,\ell]:\#\Gamma_i \leq \delta\,\# B \}.
\end{align*}
Note that if $\tau<\ell$ holds, then
\begin{align*}
	\#\bigl( \mathcal{R}_n^A \cap B \bigr)
	\geq \#\Biggl( \bigcup_{i=1}^\tau \mathcal{R}_n^{x_i} \cap B \Biggr)
	\geq (1-\delta)\,\# B.
\end{align*}
This implies that for any $c \in (0,1)$,
\begin{align*}
	&P\Bigl( \#\bigl( \mathcal{R}_n^A \cap B \bigr)<\min\{ c\phi_d(n)\,\# A,(1-\delta)\,\# B \} \Bigr)\\
	&\leq P\Bigl( \tau<\ell,\,\#\bigl( \mathcal{R}_n^A \cap B \bigr)<(1-\delta)\,\# B \Bigr)+P\Bigl( \tau \geq \ell,\,\#\bigl( \mathcal{R}_n^A \cap B \bigr)<c\phi_d(n) \ell \Bigr)\\
	&= P\Bigl( \tau \geq \ell,\,\#\bigl( \mathcal{R}_n^A \cap B \bigr)<c\phi_d(n) \ell \Bigr).
\end{align*}
Therefore, once we can find a constant $c \in (0,1)$ (which depends only on $d$ and $\delta$) such that
\begin{align}\label{eq:app_goal}
	P\Bigl( \tau \geq \ell,\,\#\bigl( \mathcal{R}_n^A \cap B \bigr)<c\phi_d(n) \ell \Bigr)
	\leq \exp\{-c \ell \},
\end{align}
the proposition follows since $\ell=\# A$.

To find a constant $c=c(d,\delta) \in (0,1)$ satisfying \eqref{eq:app_goal}, for each $i=1,\dots,\ell$, we introduce the $\sigma$-field $\mathcal{F}_i$ generated by $S_\cdot^{x_1},\dots,S_\cdot^{x_i}$ and define
\begin{align*}
	Y_i:=
	\begin{cases*}
		1, & if $\#(\mathcal{R}_n^{x_i} \cap \Gamma_i)<\Cr{app1}\phi_d(n)$,\\
		0, & otherwise.
	\end{cases*}	
\end{align*}
Note that $(\mathcal{F}_i)_{i=0}^\ell$ is a filtration (with the convention that $\mathcal{F}_0$ is the trivial $\sigma$-field) and $(Y_i\1{\{ \tau \geq i \}})_{i=1}^\ell$ is a family of $(\mathcal{F}_i)_{i=0}^\ell$-adapted Bernoulli random variables.
Since, for each $i=1,\dots,\ell$, both the random set $\Gamma_i$ and the event $\{ \tau \geq i \}$ are $\mathcal{F}_{i-1}$-measurable, Lemma~\ref{lem:range_lower} with $\epsilon=\delta^2$ implies that that for any $i=1,\dots,\ell$, $P$-almost surely,
\begin{align*}
	P\bigl( Y_i\1{\{ \tau \geq i \}}=1 \big| \mathcal{F}_{i-1} \bigr)
	&= \sum_{\substack{\Gamma \subset B\\\#\Gamma \geq \delta\,\# B}}P\bigl( \#(\mathcal{R}_n^{x_i} \cap \Gamma)<\Cr{app1}\phi_d(n) |\mathcal{F}_{i-1} \bigr)\1{\{ \Gamma_i=\Gamma,\,\tau \geq i \}}\\
	&= \sum_{\substack{\Gamma \subset B\\\#\Gamma \geq \delta\,\# B}}P\bigl( \#(\mathcal{R}_n^{x_i} \cap \Gamma)<\Cr{app1}\phi_d(n) \bigr)\1{\{ \Gamma_i=\Gamma,\,\tau \geq i \}}\\
	&\leq 1-\Cr{app1}.
\end{align*}
Here, in the second equality, we used the fact that $\mathcal{R}_n^{x_i}$ is independent of $\mathcal{F}_{i-1}$.
Hence, we can use Lemma~\ref{lem:chernoff} with $X_i=Y_i\1{\{ \tau \geq i \}}$ and $q=1-\Cr{app1}$ to obtain
\begin{align*}
	P\biggl( \sum_{i=1}^\ell Y_i\1{\{ \tau \geq i \}} \geq (1-\Cr{app2})\ell \biggr) \leq \exp\{ -\Cr{app2}\ell \}.
\end{align*}
It is clear that if $\sum_{i=1}^\ell Y_i<(1-\Cr{app2})\ell$ (or equivalently $\sum_{i=1}^\ell (1-Y_i)>\Cr{app2}\ell$), then
\begin{align*}
	\#\bigl( \mathcal{R}_n^A \cap B \bigr)
	= \sum_{i=1}^\ell \#\bigl( \mathcal{R}_n^{x_i} \cap \Gamma_i \bigr)
	\geq \Cr{app1}\Cr{app2}\phi_d(n)\ell.
\end{align*}
In addition, for any $i=1,\dots,\ell$, $\1{\{ \tau \geq i \}}=1$ holds on the event $\{ \tau \geq \ell \}$.
Therefore, by taking $\Cr{CKN}:=\Cr{app1}\Cr{app2}$, one has
\begin{align*}
	P\Bigl( \tau \geq \ell,\,\#\bigl( \mathcal{R}_n^A \cap B \bigr)<\Cr{CKN}\phi_d(n)\ell \Bigr)
	&\leq P\biggl(\tau \geq \ell,\,\sum_{i=1}^\ell Y_i\1{\{ \tau \geq i \}} \geq (1-\Cr{app2})\ell \biggr)\\
	&\leq \exp\{ -\Cr{app2}\ell \} \leq \exp\{ -\Cr{CKN}\ell \},
\end{align*}
and $\Cr{CKN}$ is the desired constant satisfying \eqref{eq:app_goal}.
\end{proof}

\section*{Acknowledgements}
The authors thank Can Van Hao and Shuta Nakajima for permitting the inclusion of a result in~\cite{CanKubNak25_IP} in this paper.
R.F.~was supported by JSPS KAKENHI Grant Number JP22H00099 and JP25K00911.
N.K.~was supported by JSPS KAKENHI Grant Number JP20K14332 and JP25K07054.


\begin{thebibliography}{10}

\bibitem{AlvMacPop02}
O.~S.~M. Alves, F.~P. Machado, and S.~Y. Popov.
\newblock The shape theorem for the frog model.
\newblock {\em The Annals of Applied Probability}, 12(2):533--546, 2002.

\bibitem{AlvMacPopRav01}
O.~S.~M. Alves, F.~P. Machado, S.~Y. Popov, and K.~Ravishankar.
\newblock The shape theorem for the frog model with random initial
  configuration.
\newblock {\em Markov Process. Related Fields}, 7(4):525--539, 2001.

\bibitem{BecDinDurHuoJun18}
E.~Beckman, E.~Dinan, R.~Durrett, R.~Huo, and M.~Junge.
\newblock Asymptotic behavior of the {B}rownian frog model.
\newblock {\em Electronic Journal of Probability}, 23, 2018.

\bibitem{BerRam16}
J.~B{\'e}rard and A.~Ram{\'\i}rez.
\newblock Fluctuations of the front in a one-dimensional model for the spread
  of an infection.
\newblock {\em The Annals of Probability}, 44(4):2770--2816, 2016.

\bibitem{BerRam10}
J.~B{\'e}rard and A.~F. Ram{\'\i}rez.
\newblock Large deviations of the front in a one-dimensional model of {$X+Y \to
  2X$}.
\newblock {\em The Annals of Probability}, 38(3):955--1018, 2010.

\bibitem{CanKubNak25_IP}
V.~H. Can, N.~Kubota, and S.~Nakajima.
\newblock Upper tail large deviation for the first passage time in the frog
  model on $\mathbb{Z}^d$ with $d \geq 2$.
\newblock in preparation.

\bibitem{CanKubNak23_arXiv}
V.~H. Can, N.~Kubota, and S.~Nakajima.
\newblock Upper tail large deviation for the one-dimensional frog model.
\newblock {\em arXiv:2312.02745}, 2023.

\bibitem{CanKubNak25}
V.~H. Can, N.~Kubota, and S.~Nakajima.
\newblock Lipschitz-type estimate for the frog model with {B}ernoulli initial
  configuration.
\newblock {\em Mathematical Physics, Analysis and Geometry}, 28(1):1--40, 2025.

\bibitem{CanNak19}
V.~H. Can and S.~Nakajima.
\newblock First passage time of the frog model has a sublinear variance.
\newblock {\em Electron. J. Probab.}, 24:Paper No. 76, 27, 2019.

\bibitem{DemZei10_book}
A.~Dembo and O.~Zeitouni.
\newblock Large deviations techniques and applications, volume 38 of stochastic
  modelling and applied probability, 2010.

\bibitem{DvoErd51}
A.~Dvoretzky and P.~Erd{\"o}s.
\newblock Some problems on random walk in space.
\newblock In {\em Proc. 2nd Berkeley Symp}, pages 353--367, 1951.

\bibitem{GarMar10}
O.~Garet and R.~Marchand.
\newblock Moderate deviations for the chemical distance in {B}ernoulli
  percolation.
\newblock {\em Alea}, 7:171--191, 2010.

\bibitem{Gri99_book}
G.~Grimmett.
\newblock {\em Percolation}, volume 321 of {\em Grundlehren der mathematischen
  Wissenschaften [Fundamental Principles of Mathematical Sciences]}.
\newblock Springer-Verlag, Berlin, second edition, 1999.

\bibitem{GuoTanWei22}
C.~Guo, S.~Tang, and N.~Wei.
\newblock On the minimal drift for recurrence in the frog model on d-ary trees.
\newblock {\em The Annals of Applied Probability}, 32(4):3004--3026, 2022.

\bibitem{HofJohJun17}
C.~Hoffman, T.~Johnson, and M.~Junge.
\newblock Recurrence and transience for the frog model on trees.
\newblock {\em The Annals of Probability}, 45(5):2826--2854, 2017.

\bibitem{JohJun18}
T.~Johnson and M.~Junge.
\newblock Stochastic orders and the frog model.
\newblock {\em Ann. Inst. Henri Poincar\'e{} Probab. Stat.}, 54(2):1013--1030,
  2018.

\bibitem{JohRol19}
T.~Johnson and L.~T. Rolla.
\newblock Sensitivity of the frog model to initial conditions.
\newblock {\em Electronic Communications in Probability}, 24:1--9, 2019.

\bibitem{KosZer17}
E.~Kosygina and M.~P. Zerner.
\newblock A zero-one law for recurrence and transience of frog processes.
\newblock {\em Probability Theory and Related Fields}, 168(1-2):317--346, 2017.

\bibitem{Kub19}
N.~Kubota.
\newblock Deviation bounds for the first passage time in the frog model.
\newblock {\em Advances in Applied Probability}, 51(1):184--208, 2019.

\bibitem{Kub20}
N.~Kubota.
\newblock Continuity for the asymptotic shape in the frog model with random
  initial configurations.
\newblock {\em Stochastic Processes and their Applications}, 130(9):5709 --
  5734, 2020.

\bibitem{LawLim10_book}
G.~F. Lawler and V.~Limic.
\newblock {\em Random walk: a modern introduction}, volume 123.
\newblock Cambridge University Press, 2010.

\bibitem{MicRos20}
M.~Michelen and J.~Rosenberg.
\newblock The frog model on non-amenable trees.
\newblock {\em Electron. J. Probab}, 25(49):1--16, 2020.

\bibitem{MulWie20}
S.~M{\"u}ller and G.~M. Wiegel.
\newblock On transience of frogs on galton--watson trees.
\newblock {\em Electronic Journal of Probability}, 25, 2020.

\bibitem{RamSid04}
A.~F. Ram\'{\i}rez and V.~Sidoravicius.
\newblock Asymptotic behavior of a stochastic combustion growth process.
\newblock {\em J. Eur. Math. Soc.(JEMS)}, 6(3):293--334, 2004.

\bibitem{TelWor99}
A.~Telcs and N.~C. Wormald.
\newblock Branching and tree indexed random walks on fractals.
\newblock {\em Journal of applied probability}, pages 999--1011, 1999.

\end{thebibliography}


\end{document}